\let\@wraptoccontribs\wraptoccontribs
\title{On the Right Derived Functors of Ordinary Parts}
\date{}
\author{Manuel Hoff}
\address{Manuel Hoff: Fakultät für Mathematik, Universität Bielefeld, D-33501 Bielefeld}
\email{manuel.hoff@uni-bielefeld.de}
\author{Sarah Diana Meier}
\address{Sarah Diana Meier: Fakultät für Mathematik, Universität Bielefeld, D-33501 Bielefeld}
\email{smeier@math.uni-bielefeld.de}
\author{Michael Spieß}
\address{Michael Spieß: Fakultät für Mathematik, Universität Bielefeld, D-33501 Bielefeld}
\email{mspiess@math.uni-bielefeld.de}
\address{Claudius Heyer: Fakultät für Mathematik, Universität Paderborn, D-33098 Paderborn}
\email{heyer@math.upb.de}
\begin{document}

\begin{abstract}
    We prove a variant of Emerton's conjecture concerning the right derived functors of the ordinary parts functor $\Ord_P^G$.
    This functor plays an important role in the theory of mod $p$ representations of $p$-adic reductive groups.
    A key ingredient for our proof is a comparison between certain small and parabolic inductions.
    Additionally, our method yields an explicit description of Vignéras' right adjoint to parabolic induction. 
    
    In the appendix (joint with Heyer) we apply our results to obtain a mod $p$ variant of Bernstein's Second Adjointness, i.e.\ we show that the right and left adjoint of derived parabolic induction are isomorphic (on complexes with admissible cohomology) up to a cohomological shift and twist by a character. 
\end{abstract}

\maketitle

\tableofcontents

\section{Introduction} \label{sec-intro}

Let\extrafootertext{This research was funded by the Deutsche Forschungsgemeinschaft (DFG, German Research Foundation)
– Project-ID 491392403 – TRR 358} $G$ be a $p$-adic reductive group, that is the group of $F$-points of a connected reductive algebraic group over a $p$-adic field $F$.
Let $P\subseteq G$ be a parabolic subgroup with Levi decomposition $P = M N$ and opposite parabolic $\ovrP=M\ovrN$.
For a commutative Noetherian ring $R$ we consider the functor of parabolic induction
\[
	\Ind_{\ovrP}^G \colon \Mod_R^{\sm}(M) \longrightarrow \Mod_R^{\sm}(G) \, ,
\]
where $\Mod_R^{\sm}(M)$ and $\Mod_R^{\sm}(G)$ denote the categories of smooth representations of $M$ and $G$ over $R$ respectively.
It is an exact functor, right adjoint to the functor assigning to a smooth $G$-representation $V$ its $\ovrN$-coinvariants $V_{\ovrN}$.

In \parencite{em1} Emerton has introduced a left exact functor -- the functor of ordinary parts -- in the opposite direction
\[
	\Ord_P^G \colon \Mod_R^{\sm}(G) \longrightarrow \Mod_R^{\sm}(M) \, .
\]
Its significance lies in the fact that, when working within the subcategories of admissible representations, it becomes right adjoint to parabolic induction.
More precisely, both $\Ind_{\ovrP}^G$ and $\Ord_P^G$ preserve admissibility and their restrictions to the categories $\Mod_R^{\adm}(M)$ and $\Mod_R^{\adm}(G)$ of admissible smooth representations naturally form an adjoint pair, see \parencite{em1} and \parencite{vig}.
The functor of ordinary parts plays an important role in the study of mod $p$ and $p$-adic representations of $p$-adic reductive groups and, in particular, in the $p$-adic Langlands program.
It can also be used to provide a representation theoretic framework of Hida's theory of nearly ordinary cohomology. 

In the sequel \parencite{em2} Emerton addresses the question of an explicit description of the right derived functors of $\Ord_P^G$.    
For that he considers the sequence of functors $\HH^n \Ord_P^G \colon \Mod_R^{\sm}(G) \to \Mod_R^{\sm}(M)$ (with $\HH^0 \Ord_P^G = \Ord_P^G$) defined by
\[
	\HH^n \Ord_P^G V
	\, \coloneqq \,
	\roundbr[\Big]{\Ind_{M^+}^M \HH^n(N_0, V)}^{\lf{A_M}} \, .
\]
Here $N_0 \subseteq N$ denotes a fixed compact open subgroup, $M^+ = \set{m \in M}{m N_0 m^{-1} \subseteq N_0} \subseteq M$ is the submonoid of positive elements and $A_M$ is the maximal split central subtorus of $M$.
The suffix \enquote*{$\lf{A_M}$} indicates taking $A_M$-locally finite vectors, i.e.\ passing to the subrepresentation consisting of those $w$ such that $R[A_M] \cdot w$ is finitely generated over $R$.

In the case where $R$ is an Artinian local ring with finite residue field of characteristic $p$, it is shown in loc.\ cit.\ that $\HH^{\bullet} \Ord_P^G$ defines a $\delta$-functor when restricted to $\Mod_R^{\adm}(G)\subseteq \Mod_R^{\sm}(G)$ or more generally to the subcategory $\Mod_R^{\ladm}(G) \subseteq \Mod_R^{\sm}(G)$ of locally admissible smooth representations of $G$ over $R$.
It is expected that in fact $\HH^{\bullet} \Ord_P^G \colon \Mod_R^{\ladm}(G)\to \Mod_R^{\ladm}(M)$ is a universal $\delta$-functor, see \parencite[Conjecture 3.7.2]{em2}. 
Our first main result is related to this conjecture.

\begin{theoremletter}[\ref{thm-right-derived-ord}] \label{theo:emertonconj}
	Let $R$ be an Artinian local ring with residue field of characteristic $p$, and let $V$ be a locally admissible $G$-representation over $R$.
	Then we have
	\[
		\bbR^n \Ord_P^G V
		\, \cong \,
		\HH^n \Ord_P^G V
		\, \cong \,
        \ind_{M^+}^M \HH^n(N_0, V)
	\]
	for all $n \geq 0$.
\end{theoremletter}
 
We remark that, unlike in \parencite{em2}, we consider here the derived functors of $\Ord_P^G$ on the larger category $\Mod_R^{\sm}(G)$ instead of $\Mod_R^{\ladm}(G)$.
In the case when $G = \PGL_2(F)$ this result has been established in \parencite{spi}.
The original conjecture has been proven for $G = \GL_2(F)$ in \parencite{empa}. 

A crucial ingredient in the proof of \cref{theo:emertonconj} is a comparison theorem between certain small and parabolically induced $G$-representations which we think is of independent interest.
To state this result we consider a compact open subgroup $K_0 \subseteq G$ that admits an Iwahori decomposition $K_0 = N_0 M_0 \ovrN_0$ for compact open subgroups $M_0 \subseteq M$ and $\ovrN_0 \subseteq \ovrN$.
Let $W$ be a smooth representation of $K_0$ over $R$ that is trivial on $N_0$ and $\ovrN_0$.
Then the induced $G$-representation $\ind_{K_0}^G W$ carries a natural right Hecke $A_M^+$-action and we have the following result.

\begin{theoremletter}[\ref{thm-comparison}]
\label{theo:comparison}
	There exists a canonical isomorphism
	\[
		\ind_{A_M^+}^{A_M} \ind_{K_0}^G W
		\, \cong \,
		\Ind_{\ovrP}^G \ind_{M_0}^M W
	\]
	of smooth $G$-representations.
\end{theoremletter}

Our strategy of proof, carried out in \cref{sec-localized-compact-induction}, is to construct a $G$-equivariant sheaf on the profinite set $X = G/\ovrP$ whose global sections is the left hand side of the isomorphism and whose stalk at $x_0 \coloneqq \ovrP \in X$ is $\ind_{M_0}^M W$.
The earliest similar result has been established in \parencite{bali} for $\GL_2(F)$.
More recent statements of this kind have been made in \parencite{hen-vig} (for special parahoric subgroups $K_0$) and also \parencite{herz1} (for split reductive groups).

We also briefly describe the strategy of proof of \cref{theo:emertonconj} given in \cref{sec-derived-functors}.
We choose a suitable decreasing sequence $(K_i)_{i \geq 0}$ of compact open subgroups of $G$ such that each $K_i$ admits an Iwahori decomposition and such that $\bigcap_i K_i = N_0$.
We then have
\[
    \bbR^n \Ord_P^G V
	\, \cong \,
	\colim_i \bbR^n \Ord_{P, K_i}^G V
\]
as smooth $A_M$-representations, where $\Ord_{P, K_i}^G V \coloneqq (\Ind_{A_M^+}^{A_M} V^{K_i})^{\lf{A_M}}$.
For the derived functors of $\Ord_{P, K_i}^G$ we show that for $V$ a locally admissible $G$-representation we have 
\[
	\bbR^n \Ord_{P, K_i}^G V
	\, \cong \,
	\roundbr[\Big]{\Ind_{A_M^+}^{A_M} \HH^n(K_i, V)}^{\lf{A_M}} \, .
\]
To prove this, we crucially use that $\ind_{A_M^+}^{A_M} \ind_{K_i}^G \trivrep$ is flat over $R[A_M/A_M \cap K_i]$, a fact that follows from \cref{theo:comparison}.

It turns out that \cref{theo:comparison} can also be used to give a rather explicit description of the right adjoint $\R_{\ovrP}^G$ of parabolic induction on the category of all smooth representations (i.e.\ without restricting to admissible or locally admissible ones) that has been studied in \parencite{vig}.
We show in \cref{cor-describing-right-adjoint} below that for a smooth $G$-representation $V$ we have
\[
	\R_{\ovrP}^G V \, \cong \, \colim_i \Ind_{A_M^+}^{A_M} V^{K_i} \, .
\]
Moreover, if $V$ is admissible and $R$ is Artinian local with residue field of characteristic $p$, then for the right derived functors of $\R_{\ovrP}^G$ we get
\[
	\bbR^n \R_{\ovrP}^G V
	\, \cong \,
	\bbR^n \Ord_P^G V
	\, \cong \,
	\HH^n \Ord_P^G V \, ,
\]
see \cref{cor-ord-rpg}. 
In special cases for $G = \SL_2(\bbQ_p)$ this has been proven in \parencite[]{koziol}.

\medskip

In the appendix (joint with Claudius Heyer) we prove a mod $p$ variant of Bernstein's Second Adjointness for complex coefficients (cf. \parencite[]{bernstein}). 

\begin{theoremletter}[\ref{thm-sec-adj}]
	Let $V^{\bullet}$ be a complex of smooth $G$-representations with admissible cohomology.
	Then the natural morphism
 	\[
 		\omega \otimes_R^{\bbL} \bbR \R_{\ovrP}^G V^{\bullet} 
 		\longrightarrow \bbL_P^G V^{\bullet}
 	\]
 	is an equivalence. 
\end{theoremletter}

Here $\bbL_P^G$ is the left adjoint (introduced in \cite{heyer-left-adjoint}) to the derived parabolic induction functor $\Ind_P^G$ and $\omega$ is a certain smooth character of $M$ placed in cohomological degree $-\dim_{\bbQ_p} N$. 

In \parencite{datetal}, Bernstein's result has been generalized to an arbitrary coefficient ring $R$ in which $p$ is invertible.
While this result holds for general smooth representations, we cannot expect the same in our setting as the right adjoint $\R_{\ovrP}^G$ does not commute with arbitrary direct sums, see \parencite[Proposition 4.24]{abe}.

\medskip

\subsection*{Acknowledgments}

We like to thank Florian Herzig, Simon Paege, Vytautas Paškūnas and Paul Schneider for very helpful discussions and comments. Special thanks go to Claudius Heyer who suggested the application to Second Adjointness and also contributed to some of our results (such as \cref{cor-right-derived-rpg} and \cref{cor-ord-rpg}). 

\medskip

\section{Setup and Notation} \label{sec-setup}

\subsection*{Smooth representations} \label{subsec-smooth-reps}

Fix a commutative coefficient ring $R$.
Let $H$ be a locally profinite monoid; by this we mean a Hausdorff topological monoid that admits a profinite open subgroup $U \subseteq H$ such that $H/U$ and $U \backslash H$ are discrete.

We write $\Mod_R^{\sm}(H)$ for the Grothendieck abelian category of \emph{smooth ($R$-linear) $H$-representations}, i.e.\ of those $R[H]$-modules $V$ such that every vector $v \in V$ is fixed by an open subgroup $U \subseteq H$.
When $H$ is discrete we simply write $\Mod_R(H)$ instead of $\Mod_R^{\sm}(H)$.

Given a locally profinite closed submonoid $I \subseteq H$ we write
\[
    \Ind_I^H \colon \Mod_R^{\sm}(I) \longrightarrow \Mod_R^{\sm}(H)
\]
for the \emph{smooth induction functor}, the right adjoint to restriction $\Res_I^H \colon \Mod_R^{\sm}(H) \to \Mod_R^{\sm}(I)$. It is concretely given by sending a smooth $I$-representation $W$ to $\Ind_I^H W \coloneqq \Hom_I(R[H], W)^{\sm}$.
Given a subset $H' \subseteq H$ that is stable under left multiplication by $I$ and right multiplication by some open subgroup of $H$ we also write $\Ind_I^{H'} W \coloneqq \Hom_I(R[H'], W)^{\sm}$.
Given two such subsets $H'' \subseteq H' \subseteq H$ we have a natural restriction map $\Ind_I^{H'} W \to \Ind_I^{H''} W$.
When $H$ is a locally profinite group and $I \subseteq H$ a closed subgroup then we also write $\cInd_I^H \subseteq \Ind_I^H$ for the \emph{compact induction functor} and use a similar notation for $H' \subseteq H$ as above.

Similarly, given an open submonoid $I \subseteq H$ we write
\[
    \ind_I^H \colon \Mod_R^{\sm}(I) \longrightarrow \Mod_R^{\sm}(H)
\]
for the \emph{small induction functor}, the left adjoint to restriction. It is concretely given by sending a smooth $I$-representation $W$ to $\ind_I^H W \coloneqq R[H] \otimes_{R[I]} W$.
For an element $w \in W$ we write $1_{I, w} \coloneqq 1 \otimes w \in \ind_I^H W$.
Given a subset $H' \subseteq H$ that is stable under right multiplication by $I$ we also write $\ind_I^{H'} W \coloneqq R[H'] \otimes_{R[I]} W$.
Given two such subsets $H'' \subseteq H' \subseteq H$ we have a natural map $\ind_I^{H''} W \to \ind_I^{H'} W$ and when $I$ is a locally profinite group, then this map even admits a canonical retraction coming from the decomposition $\ind_I^{H'} W = \ind_I^{H''} W \oplus \ind_I^{H' \setminus H''} W$.

\medskip

Suppose that $R$ is Noetherian.
We say that a smooth $H$-representation $V$ is \emph{finite} if it is finitely generated as an $R$-module and denote by $\Mod_R^{\sm, \f}(H) \subseteq \Mod_R^{\sm}(H)$ the full subcategory spanned by the finite representations.
This subcategory contains $0$ and is stable under taking subobjects, quotients and extensions.

Given a locally profinite closed central submonoid $H' \subseteq H$ such that $H'/U$ is finitely generated for every (or equivalently one) compact open subgroup $U \subseteq H'$, we say that a smooth $H$-representation $V$ is \emph{$H'$-locally finite} if $R[H'] \cdot v$ is a finitely generated $R$-module for all $v \in V$ and denote by $\Mod_R^{\sm, \lf{H'}}(H)$ the full subcategory of $\Mod_R^{\sm}(H)$ spanned by the  $H'$-locally finite $H$-representations.
This subcategory contains $0$ and is stable under taking subobjects, quotients, extensions and arbitrary direct sums.
We write
\[
    (\blank)^{\lf{H'}} \colon \Mod_R^{\sm}(H) \longrightarrow \Mod_R^{\sm, \lf{H'}}(H)
\]
for the right adjoint to the inclusion.
In the special case $H' = H$ we simply write $\Mod_R^{\sm, \lf{}}(H)$ instead of $\Mod_R^{\sm, \lf{H}}(H)$.

We say that a smooth $H$-representation $V$ is \emph{admissible} if for every open subgroup $U \subseteq H$ the invariants $V^U$ form a finitely generated $R$-module.
We say that $V$ is \emph{locally admissible} if $R[H] \cdot v \subseteq V$ is admissible for all $v \in V$.
% When $R$ is an Artinian local ring with finite residue field of characteristic $p$ then these definitions are equivalent to the ones given in \parencite[Definitions 2.2.9 and 2.2.17]{em1}; this follows from \parencite[Lemma 3.4.4]{em2}.

\medskip

\subsection*{Homological Algebra} \label{subsec-hom-alg}

Given a Grothendieck abelian category $\cA$ we write $\cK(\cA)$ and $\cD(\cA)$ for the (unbounded) homotopy category and the derived category of cochain complexes in $\cA$ respectively.
For a complex $V^{\bullet} \in \cD(\cA)$ we write $\HH^n(V^{\bullet}) \in \cA$ for its $n$-th cohomology object.
For a locally profinite monoid $H$ we also use the shortened notation $\cD(H) \coloneqq \cD(\Mod_R^{\sm}(H))$. 

Given a left exact functor $F \colon \cA \to \cB$ between Grothendieck abelian categories we write
\[
    \bbR F \colon \cD(\cA) \longrightarrow \cD(\cB)
\]
for its total right derived functor and $\bbR^n F \colon \cD(\cA) \to \cB$ for its $n$-th derived functor.

\medskip

\subsection*{Algebraic Groups} \label{subsec-alg-grps}

We fix a prime number $p$ and a $p$-adic local field $F$.
We also fix a connected reductive group $\bfG$ over $F$, a parabolic $\bfP \subseteq \bfG$ with Levi decomposition $\bfP = \bfM \bfN$ and opposite parabolic $\bfovrP = \bfM \bfovrN$, and write $\bfA_{\bfM}$ for the maximal split central torus of $\bfM$.
We denote by $G$, $P$, $M$, $N$, $\ovrP$, $\ovrN$, $A_M$ the locally profinite groups of $F$-rational points of $\bfG$, $\bfP$, $\bfM$, $\bfN$, $\bfovrP$, $\bfovrN$, $\bfA_{\bfM}$.

Furthermore, we fix a descending sequence $(K_i)_{i \geq 0}$ of torsionfree compact open subgroups $K_i \subseteq G$ and an open submonoid $A_M^+ \subseteq A_M$ with the following properties.
\begin{enumerate}
    \item 
    The compact open subgroup $N_0 \coloneqq K_i \cap N \subseteq N$ is independent of the choice of $i \geq 0$.

    \item
    Each $K_i$ is Iwahori-decomposed with respect to $(N, M, \ovrN)$, i.e.\ we have $K_i = N_0 M_i \ovrN_i$ for compact open subgroups $M_i \subseteq M$ and $\ovrN_i \subseteq \ovrN$.
    We write $P_i \coloneqq M_i N_i$ and $\ovrP_i \coloneqq M_i \ovrN_i$.

    \item
    We have $\bigcap_i K_i = N_0$, or equivalently $\bigcap_i M_i = 1$ and $\bigcap_i \ovrN_i = 1$.

    \item
    For every $a \in A_M^+$ we have $a N_0 a^{-1} \subseteq N_0$ and $a^{-1} \ovrN_i a \subseteq \ovrN_i$.

    \item
    There exists $b_0 \in A_M^+$ such that $A_M=A_M^+[b_0^{-1}]$ (such $b_0$ is automatically strongly positive in the sense of \parencite[Definition 6.16]{buko}).
\end{enumerate}
Such data always exist, see \parencite[Lemma 3.4.1]{em2} and \parencite[Lemma 6.14]{buko} for the last property.
Furthermore, according to the following lemma, which is a slight variant of \parencite[Lemma 3.2.1]{em1}, we may also assume that for all compact open subgroups $U \subseteq A_M^+$ the quotient $A_M^+/U$ is finitely generated.

\begin{lemma}
    Let $A$ be a commutative locally profinite group such that for every or equivalently one compact open subgroup $U \subseteq A$ the quotient $A/U$ is finitely generated.
    Furthermore, let $A^+ \subseteq A$ be an open submonoid such that there exists $b_0 \in A^+$ such that $A = A^+[b_0^{-1}]$.

    Then there exists another open submonoid $\tilA^+ \subseteq A$ contained in $A^+$, containing $b_0$ and still satisfying $A = \tilA^+[b_0^{-1}]$, and such that for every or equivalently one compact open subgroup $U \subseteq \tilA^+$ the quotient $\tilA^+/U$ is finitely generated.
\end{lemma}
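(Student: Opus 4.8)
The plan is to factor out a compact open subgroup, reduce to a combinatorial statement about finitely generated abelian groups, settle it there, and lift the answer back. So first I would fix a compact open subgroup $U \subseteq A^{+}$; it exists by van Dantzig's theorem, since $A^{+}$ is an open neighbourhood of the identity. Passing to the finitely generated abelian group $\Lambda \coloneqq A/U$, I would let $\Lambda^{+} \subseteq \Lambda$ be the image of the submonoid $A^{+}$ (itself a submonoid, as $A$ is commutative) and $\overline{b_{0}}$ the image of $b_{0}$; the relation $A = A^{+}[b_{0}^{-1}]$ then descends to $\Lambda = \Lambda^{+}[\overline{b_{0}}^{-1}]$.

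Second, I would solve the problem inside $\Lambda$. Picking a finite generating set $\lambda_{1}, \dots, \lambda_{n}$ of the group $\Lambda$, I would write for each $i$ an expression $\lambda_{i} = \mu_{i}\overline{b_{0}}^{-k_{i}}$ and $\lambda_{i}^{-1} = \nu_{i}\overline{b_{0}}^{-l_{i}}$ with $\mu_{i},\nu_{i} \in \Lambda^{+}$ and $k_{i},l_{i} \ge 0$. Then the submonoid $\tilde\Lambda^{+} \subseteq \Lambda^{+}$ generated by $\overline{b_{0}}$ and all the $\mu_{i},\nu_{i}$ is finitely generated, contains $\overline{b_{0}}$, and satisfies $\Lambda = \tilde\Lambda^{+}[\overline{b_{0}}^{-1}]$: every element of $\Lambda$ is a product of powers $\lambda_{i}^{\pm 1}$, each of which lies in the submonoid $\tilde\Lambda^{+}[\overline{b_{0}}^{-1}]$ of $\Lambda$.

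Third, I would lift. Choose preimages $\hat\mu_{i},\hat\nu_{i} \in A^{+}$ of the $\mu_{i},\nu_{i}$ (possible as $\Lambda^{+}$ is the image of $A^{+}$) and let $\tilA^{+} \subseteq A$ be the submonoid generated by $U$, $b_{0}$, and the $\hat\mu_{i},\hat\nu_{i}$. Then $\tilA^{+} \subseteq A^{+}$, one has $b_{0} \in \tilA^{+}$, and $\tilA^{+}$ is open, being a union of translates of $U$. Since $U \subseteq \tilA^{+}$ and $A$ is commutative, $\tilA^{+}$ is a union of $U$-cosets whose image in $\Lambda$ is exactly $\tilde\Lambda^{+}$, so $\tilA^{+}/U \cong \tilde\Lambda^{+}$ is finitely generated; the passage from one to every compact open subgroup in the conclusion is the usual finite-index bookkeeping (enlarging $U$ shrinks the quotient, while replacing $U$ by an open subgroup $U'$ only adds the finitely many classes of $U/U'$ as extra monoid generators). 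Finally, for $A = \tilA^{+}[b_{0}^{-1}]$: given $a \in A$, write $\overline{a} = \overline{\tilde a}\,\overline{b_{0}}^{-k}$ with $\overline{\tilde a} \in \tilde\Lambda^{+}$ the image of some $\tilde a \in \tilA^{+}$; then $u \coloneqq a\tilde a^{-1}b_{0}^{k} \in U$, so $a = (u\tilde a)\,b_{0}^{-k}$ with $u\tilde a \in U \cdot \tilA^{+} = \tilA^{+}$.

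I expect the only delicate point to be this last identity $A = \tilA^{+}[b_{0}^{-1}]$: a priori the lift from $\tilde\Lambda^{+} = \tilA^{+}/U$ back to $A$ could break the localization, and it survives precisely because $U$ was built into $\tilA^{+}$ so as to absorb the error term $u = a\tilde a^{-1}b_{0}^{k} \in U$ coming from the ambiguity of lifts. Everything else --- openness of $\tilA^{+}$, the inclusion $\tilA^{+} \subseteq A^{+}$, and $b_{0} \in \tilA^{+}$ --- is immediate from the construction.
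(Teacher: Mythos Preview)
Your proof is correct and follows essentially the same approach as the paper: reduce modulo a compact open subgroup $U \subseteq A^+$ to the discrete case, take a finite set of monoid generators of $A/U$, shift them by powers of $b_0$ into $A^+/U$, and let $\tilA^+$ be the submonoid generated by $U$, $b_0$, and lifts of these shifted generators. The only differences are cosmetic---the paper absorbs the ``lift back'' step into the phrase ``we may assume $A$ is discrete'' and uses a single common exponent $m$ rather than individual $k_i,l_i$---so there is nothing substantively new to compare.
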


\begin{proof}
    After replacing $A$ by $A/U$ and $A^+$ by $A^+/U$ for a compact open subgroup $U \subseteq A^+$, we may assume that $A$ is discrete.
    Now choose a finite system of generators $(e_1, \dotsc, e_r)$ of $A$ as a monoid and a non-negative integer $m$ such that $b_0^m e_i \in A^+$ for all $i = 1, \dotsc, r$.
    Then
    \[
        \tilA^+ \, \coloneqq \, \anglebr[\big]{\set{b_0^m e_i}{i = 1, \dotsc, r} \cup \curlybr{b_0}} \, \subseteq \, A^+
    \]
    satisfies the required conditions.
\end{proof}

We also write 
\[
    M^+ \, \coloneqq \, \set[\big]{m \in M}{m N_0 m^{-1} \subseteq N_0}
\]
and $P^+ \coloneqq N_0 M^+$. Note that $M^+ \subseteq M$ and $P^+ \subseteq P$ both are open submonoids and that we have $A_M^+ \subseteq M^+ \cap A_M$ and $M = M^+[b_0^{-1}]$.

\medskip
\section{Comparing Localized Small Induction with Parabolic Induction} \label{sec-localized-compact-induction}

Fix a compact open subgroup $K \subseteq G$ that admits an Iwahori decomposition
with respect to $(N, M, \ovrN)$ and such that we have $a (K \cap N) a^{-1} \subseteq K \cap N$ and $a^{-1} (K \cap \ovrN) a \subseteq K \cap \ovrN$ for all $a \in A_M^+$.
Also fix a smooth $K$-representation $(W, \tau)$ that is trivial on $K \cap N$ and $K \cap \ovrN$.

\medskip

The goal of this section is to prove that there is a natural isomorphism
\[
    \ind_{A_M^+}^{A_M} \ind_K^G W \, \cong \, \Ind_{\ovrP}^G \ind_{K \cap M}^M W \, ,
\]
see \Cref{thm-comparison} below.
In order to make sense of the left hand side, we need to introduce a certain Hecke action of $A_M^+$ on $\ind_K^G W$, see the following lemma.

\begin{lemma} \label{lem-hecke-actions}
    There are natural right actions of $A_M^+$ on the smooth $G$-representation $\ind_K^G W$ and on the smooth $\ovrP$-representation $\ind_{K \cap \ovrP}^{\ovrP} W$ as well as a natural right action of $A_M$ on the smooth $M$-representation $\ind_{K \cap M}^M W$ that are given by
    \begin{gather*}
        1_{K, w} \bullet a \, \coloneqq \, \sum_{n \in (K \cap N)/a (K \cap N) a^{-1}} n a \cdot 1_{K, w} \, , \\
        1_{K \cap \ovrP, w} \bullet a \, \coloneqq \, a \cdot 1_{K \cap \ovrP, w}
        \qquad \text{and} \qquad
        1_{K \cap M, w} \bullet a \, \coloneqq \, a \cdot 1_{K \cap M, w}
    \end{gather*}
    for $a \in A_M^+$ (respectively $a \in A_M$) and $w \in W$ and that are trivial on $\ker(\tau) \cap A_M$.
    Moreover, the natural $\ovrP$-equivariant restriction maps
    \[
        \ind_K^G W \longrightarrow \ind_{K \cap \ovrP}^{\ovrP} W \longrightarrow \ind_{K \cap M}^M W
    \]
    are compatible with these actions.
\end{lemma}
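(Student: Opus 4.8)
The plan is to build the action on $\ind_K^G W$ by a short abstract argument, obtain the other two actions as degenerate cases of the same formula, and then settle the formal properties. Throughout I write $N_0 := K\cap N$, $M_0 := K\cap M$, $\ovrN_0 := K\cap\ovrN$, so that $K = N_0 M_0\ovrN_0$, and $[g,w] := g\cdot 1_{K,w}\in\ind_K^G W$ for $g\in G$, $w\in W$ (so $[gk,w]=[g,\tau(k)w]$ for $k\in K$).

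For the action on $\ind_K^G W = \cInd_K^G W$ I would use the adjunction $\Hom_G(\ind_K^G W,-)\cong\Hom_K(W,\Res_K(-))$: a $G$-equivariant $R$-linear endomorphism $T$ of $\ind_K^G W$ is the same datum as the $K$-equivariant map $T\circ\eta\colon W\to\Res_K\ind_K^G W$, where $\eta(w)=1_{K,w}$, and $T$ then acts by $[g,w]\mapsto g\cdot(T\circ\eta)(w)$. Hence the construction reduces to verifying that
\[
	\phi_a\colon W\longrightarrow\ind_K^G W,\qquad w\longmapsto\sum_{n\in N_0/aN_0a^{-1}}na\cdot 1_{K,w},
\]
is $K$-equivariant; this is the step I expect to be the main obstacle. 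I would check it in the model of $\cInd_K^G W$ as compactly supported functions, by evaluating $\phi_a(\tau(k)w)$ and $k\cdot\phi_a(w)$ at a point of their common support $KaK=\bigsqcup_{n\in N_0/aN_0a^{-1}}naK$. After writing the relevant element of $K$ in its Iwahori factorization $\nu\mu\bar\nu\in N_0 M_0\ovrN_0$ and repeatedly using $aN_0a^{-1}\subseteq N_0$, $a^{-1}\ovrN_0 a\subseteq\ovrN_0$ and centrality of $a$ in $M$, the required identity collapses to the assertion that $\tau$ of a certain element of $K$ equals $\tau(\mu)=\tau(\nu\mu\bar\nu)$ — which holds precisely because $\tau$, being trivial on $N_0$ and $\ovrN_0$, only sees the Levi part of an Iwahori factorization. (Alternatively one could first treat the universal case $W=R[K/K']$ with $K'\supseteq\langle N_0,\ovrN_0\rangle$, where $\ind_K^G W=\ind_{K'}^G\trivrep$, and deduce the general case by functoriality; the essential computation is the same.)

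Granting that $\phi_a$ is $K$-equivariant, the remaining points are bookkeeping. The identity $(x\bullet a)\bullet b=x\bullet(ab)$ reduces by $G$-equivariance to the case $x=1_{K,w}$, where it becomes $\sum_{n,n'}na n'b\cdot 1_{K,w}=\sum_{n''}n''(ab)\cdot 1_{K,w}$; the substitution $na n'b=(n\cdot a n'a^{-1})(ab)$ re-indexes the left sum by $N_0/(ab)N_0(ab)^{-1}$ through the tower $N_0\supseteq aN_0a^{-1}\supseteq(ab)N_0(ab)^{-1}$, and $\phi_e=\eta$ shows that the unit of $A_M^+$ acts trivially. For triviality on $\ker\tau\cap A_M$ one observes that such an $a$ lies in a compact subgroup of $A_M$, so $a^{-1}\in A_M^+$ as well and $a$ normalizes $N_0$; the defining sum then has a single term $a\cdot 1_{K,w}=1_{K,\tau(a)w}=1_{K,w}$. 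The actions on $\ind_{K\cap\ovrP}^{\ovrP}W$ and $\ind_{K\cap M}^M W$ are the degenerate cases $\ovrP\cap N=1$ and $M\cap N=1$ of the same formula, namely $1_{K\cap\ovrP,w}\bullet a=a\cdot 1_{K\cap\ovrP,w}$ and $1_{K\cap M,w}\bullet a=a\cdot 1_{K\cap M,w}$; here well-definedness and equivariance are a direct check, the only subtlety being to move $a$ past an $\ovrN_0$-component via $\bar n_0 a=a(a^{-1}\bar n_0 a)$ with $a^{-1}\bar n_0 a\in\ovrN_0$ and then use triviality of $\tau$ on $\ovrN_0$; on the $M$-side the action extends from $A_M^+$ to $A_M$ at once since $a$ is genuinely central in $M$.

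For the compatibility I would take $\rho\colon\ind_K^G W\to\ind_{K\cap\ovrP}^{\ovrP}W$ to be the canonical projection onto the Mackey summand of the open double coset $\ovrP K$ (so $[g,w]\mapsto[\bar p,\tau(\kappa)w]$ if $g=\bar p\kappa$ with $\bar p\in\ovrP$, $\kappa\in K$, and $0$ otherwise) and $\ind_{K\cap\ovrP}^{\ovrP}W\to\ind_{K\cap M}^M W$ to be the map induced by the quotient $q\colon\ovrP\to M$, legitimate since $W$ is trivial on $\ovrN_0$. Compatibility of the second map is immediate from $q(\bar p a)=q(\bar p)\,a$. For $\rho$ I would argue on the $R$-generators $[g,w]$: if $g\notin\ovrP K$ then $gna\notin\ovrP K$ for every $n\in N_0$, since $\ovrP K a^{-1}\subseteq\ovrP K$ (as $\ovrP\cdot Ka^{-1}K=\ovrP K$, using the decomposition of $Ka^{-1}K$ along $\ovrN_0$) and $n\in K$, so both sides vanish; and if $g=\bar p\kappa\in\ovrP K$, then among the cosets $\bar p naK$ occurring in $[g,w]\bullet a=\sum_n[\bar p na,\tau(\kappa)w]$ only the one with $n\in aN_0a^{-1}$ — the trivial coset, giving $\bar p aK$ — lies in $\ovrP K$ (by the big-cell decomposition $K\subseteq\ovrN MN$ and disjointness of the cells), so $\rho$ collapses the sum to $[\bar p a,\tau(\kappa)w]=\rho([g,w])\bullet a$. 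Composing the two restriction maps then yields the stated compatibility, finishing the proof.
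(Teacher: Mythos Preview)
Your proposal is correct and follows the same route as the paper: reduce the construction to the $K$-equivariance of $\phi_a$ via Frobenius reciprocity, then verify compatibility with the Mackey projection by tracking which summands $gna$ land in $\ovrP K$. The only notable difference is in the $K$-equivariance step, where the paper avoids a direct Iwahori computation by observing that $(K\cap N)/a(K\cap N)a^{-1}\to K/(K\cap aKa^{-1})$ is a bijection, rewriting $\phi_a(w)=\sum_{k'\in K/(K\cap aKa^{-1})}k'a\cdot 1_{K,\tau(k'^{-1})w}$ (legitimate since $\tau$ kills $N_0$), and then absorbing left multiplication by $k\in K$ via the re-indexing $k''=kk'$.
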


\begin{proof}
    Let us show that given an element $a \in A_M^+$ the map
    \[
        W \longrightarrow \ind_K^G W \, , \qquad w \mapsto \sum_{n \in (K \cap N)/a (K \cap N) a^{-1}} n a \cdot 1_{K, w}
    \]
    is $K$-equivariant; this implies that we obtain a $G$-equivariant map $(\blank) \bullet a \colon \ind_K^G W \to \ind_K^G W$.
    For this we note that the natural map $(K \cap N)/a (K \cap N) a^{-1} \to K/(K \cap a K a^{-1})$ is a bijection.
    For $k \in K$ and $w \in W$ we thus obtain
    \begin{align*}
        k \cdot \sum_{n \in (K \cap N)/a (K \cap N) a^{-1}} na \cdot 1_{K, w}
        & \, = \, \sum_{k' \in K/(K \cap a K a^{-1})} k k' a \cdot 1_{K, \tau(k'^{-1}) w}
        \\
        & \, = \, \sum_{k'' \in K/(K \cap a K a^{-1})} k'' a \cdot 1_{K, \tau(k''^{-1}) \tau(k) w}
    \end{align*}
    as desired. Note that in this calculation we used that $\tau$ is trivial on $K \cap N$ and $K \cap \ovrN$.
    Showing that the other two actions are well-defined is done similarly.

    Now let us also show that the map $\alpha \colon \ind_K^G W \to \ind_{K \cap \ovrP}^{\ovrP} W$ projecting onto the identity summand of the $\ovrP$-equivariant Mackey decomposition (see \parencite[Theorem 1.1]{yamamoto})
    \[
        \ind_K^G W \, \cong \, \bigoplus_{g \in \ovrP \backslash G/K} \ind_{g K g^{-1} \cap \ovrP}^{\ovrP} g_* W
    \]
    is compatible with the action of $A_M^+$ on both sides. Note that this map is explicitly given by
    \[
        g \cdot 1_{K, w} \longmapsto
        \begin{cases}
            \ovrp \cdot 1_{K \cap \ovrP, \tau(k) w} & \text{if $g = \ovrp k \in \ovrP K$}
            \\
            0 & \text{if $g \notin \ovrP K$}
        \end{cases} \, .
    \]
    For $a \in A_M^+$, $g \in G$ and $w \in W$ we have
    \[
        \alpha \roundbr[\big]{g \cdot 1_{K, w} \bullet a} \, = \, \sum_{n \in (K \cap N)/a (K \cap N) a^{-1}} \alpha \roundbr[\big]{g n a \cdot 1_{K, w}} \, .
    \]
    Now $g n a \in \ovrP K = \ovrP (K \cap N)$ if and only if we have $g \in \ovrP (a (K \cap N) a^{-1}) n^{-1}$ so that all but possibly one of the terms in the sum above vanish.
    If $g \notin \ovrP (K \cap N)$, we obtain
    \[
        \alpha \roundbr[\big]{g \cdot 1_{K, w} \bullet a} \, = \, 0 \, = \, \alpha(g \cdot 1_{K, w}) \bullet a \, ,
    \]
    and if $g = \ovrp n \in \ovrP (K \cap N)$, we obtain
    \[
        \alpha \roundbr[\big]{g \cdot 1_{K, w} \bullet a} \, = \, \alpha(\ovrp a \cdot 1_{K, w}) \, = \, \ovrp a \cdot 1_{K \cap \ovrP, w} \, = \, \alpha(g \cdot 1_{K, w}) \bullet a \, . \qedhere
    \]
\end{proof}

\medskip

Consider the profinite set $X \coloneqq G/\ovrP$ and write $x_0 \coloneqq \ovrP \in X$ for its base point and $\pi \colon G \to X$ for the projection.
In the following by a \emph{(pre-)sheaf of $R$-modules on $X$} we always mean a (pre-)sheaf on the basis of compact open subsets of $X$.

We recall that a \emph{smooth $G$-equivariant structure} on a (pre-)sheaf $\cH$ of $R$-modules on $X$ is a collection of isomorphisms
\[
    g \cdot (\blank) \colon \cH \longrightarrow g_* \cH \, , \qquad \roundbr[\big]{s \in \cH(U)} \mapsto \roundbr[\big]{g \cdot s \in \cH(g U)}
\]
for $g \in G$ such that we have
\[
    1 \cdot s \, = \, s
    \quad \text{and} \quad
    (g g') \cdot s \, = \, g \cdot (g' \cdot s)
\]
for all $g, g' \in G$ and $s \in \cH(U)$, and such that every section $s \in \cH(U)$ is fixed by a compact open subgroup of the stabilizer $\Stab_G(U) = \set{g \in G}{g U = U} \subseteq G$.  

The basic idea for the proof of \cref{thm-comparison} now is to use the following general result.

\begin{prop} \label{prop-classify-equivariant-sheaves}
    The functor
    \[
        \curlybr[\Bigg]{
        \begin{gathered}
            \text{smooth $G$-equivariant sheaves} \\
            \text{of $R$-modules on $X$}
        \end{gathered}
        }
        \longrightarrow \Mod_R^{\sm} \roundbr[\big]{\ovrP}
    \]
    sending a $G$-equivariant sheaf $\cH$ to its stalk $\cH_{x_0}$ is an equivalence of categories.
    Moreover, for a smooth $G$-equivariant sheaf $\cH$ on $X$ we have
    \[
        \Gamma(X, \cH) \, \cong \, \Ind_{\ovrP}^G \cH_{x_0} \, .
    \]
\end{prop}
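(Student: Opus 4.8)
The plan is to verify that the stalk functor $\cH \mapsto \cH_{x_0}$ is an equivalence by constructing an explicit quasi-inverse, and then to read off the formula for global sections. Given a smooth $\ovrP$-representation $W$, I would build a $G$-equivariant sheaf $\cH_W$ on $X = G/\ovrP$ as follows. Since $X$ is profinite, a compact open subset $U \subseteq X$ is a finite disjoint union of cosets of the form $g \ovrP' / \ovrP$ for varying compact open $\ovrP' \subseteq \ovrP$ (more precisely, $\pi^{-1}(U)$ is a compact open subset of $G$ that is a finite union of double cosets $g \ovrP'$); I would set $\cH_W(U)$ to be the space of functions $s \colon \pi^{-1}(U) \to W$ that are smooth (locally constant), satisfy $s(g\ovrp) = \ovrp^{-1} \cdot s(g)$ for $\ovrp \in \ovrP$, and have compact open support modulo $\ovrP$ — that is, $\cH_W(U) = \cInd_{\ovrP}^{\pi^{-1}(U)} W$ in the notation of \cref{sec-setup}, where $\pi^{-1}(U) \subseteq G$ is right $\ovrP$-stable and left-stable under a compact open subgroup. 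The restriction maps are the obvious ones, the $G$-action is $(g \cdot s)(h) = s(g^{-1} h)$, and the smoothness condition on $G$-equivariant structures holds because $X$ is profinite so $\Stab_G(U)$ is open and sections are already locally constant.

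The key steps are then: (i) check $\cH_W$ is a sheaf on the basis of compact opens — this reduces to the elementary statement that a locally constant $W$-valued function on a profinite set glues, together with compatibility of supports, and uses that $X$ is profinite so every open cover of a compact open has a finite refinement by disjoint compact opens; (ii) compute the stalk $(\cH_W)_{x_0} = \colim_{U \ni x_0} \cH_W(U)$: the cofinal system of $U$'s is given by images of $\ovrP_i' \subseteq \ovrP$ small compact open subgroups, and taking such a neighborhood the condition "supported on $\pi^{-1}(U)$" combined with right $\ovrP$-equivariance forces a section near $x_0$ to be determined by its value at $1 \in G$, which can be any vector of $W$ fixed by $\ovrP_i' \cap \text{(stabilizer)}$; passing to the colimit over shrinking $\ovrP_i'$ and using smoothness of $W$ yields $(\cH_W)_{x_0} \cong W$ canonically and $\ovrP$-equivariantly (the $\ovrP$-action on the stalk coming from the $G$-equivariant structure restricted to $\Stab_G(x_0) = \ovrP$); (iii) show $W \mapsto \cH_W$ is quasi-inverse to $\cH \mapsto \cH_{x_0}$ — for one composite this is (ii); for the other, given an arbitrary $G$-equivariant sheaf $\cH$ one uses the $G$-action to transport stalks, identifying $\cH_{g x_0} \cong g_*(\cH_{x_0})$, and then reconstructs sections over $U$ from compatible families of stalks, which works because $\cH$ is a sheaf and $X$ is profinite (a section is determined by, and can be assembled from, its germs, with the smoothness condition matching the support condition on the $\cInd$ side).

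For the global sections formula, I would simply observe $\Gamma(X, \cH_W) = \cH_W(X) = \cInd_{\ovrP}^G W$; but since $X = G/\ovrP$ is \emph{compact}, compact induction from $\ovrP$ agrees with full smooth induction, $\cInd_{\ovrP}^G W = \Ind_{\ovrP}^G W$, giving $\Gamma(X, \cH) \cong \Ind_{\ovrP}^G \cH_{x_0}$ naturally in $\cH$.

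I expect the main obstacle to be step (iii) — carefully checking that the reconstruction $\cH \mapsto \cH_{x_0} \mapsto \cH_{\cH_{x_0}}$ recovers $\cH$ with \emph{all} its structure, i.e.\ that a $G$-equivariant sheaf on a profinite homogeneous space is genuinely determined by its stalk at the base point as a representation of the stabilizer. The subtle points are the interaction between the equivariant-structure smoothness condition and the support condition defining $\cInd$, and the verification that the comparison is natural and compatible with the $G$-action globally (not just near $x_0$); this is where one must use that $X$ is profinite — hence every point has a neighborhood basis of compact opens, and $G$ acts transitively with open stabilizer — rather than merely Hausdorff. The sheaf-gluing and stalk computations in steps (i)–(ii) are routine point-set topology over a profinite base.
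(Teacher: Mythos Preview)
Your approach is essentially the same as the paper's: construct the quasi-inverse by sending a smooth $\ovrP$-representation $W$ to the sheaf $U \mapsto \Ind_{\ovrP}^{\pi^{-1}(U)^{-1}} W$ (which, after the change of variable $g \mapsto g^{-1}$, is exactly your space of functions $s \colon \pi^{-1}(U) \to W$ with $s(g\ovrp) = \ovrp^{-1} s(g)$), and then read off global sections. The paper's proof is in fact briefer than yours---it simply writes down the inverse functor and defers the routine verifications to \parencite[Chapitre III.3.2]{ren}.

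Two small points. First, the compact-support condition you impose is redundant: since $U \subseteq X$ is already compact, every section on $\pi^{-1}(U)$ automatically has compact support modulo $\ovrP$, so $\cInd_{\ovrP}^{\pi^{-1}(U)} W = \Ind_{\ovrP}^{\pi^{-1}(U)} W$ for \emph{all} compact open $U$, not just $U = X$. Second, your description of the neighborhood basis of $x_0$ is not right: the image in $X$ of a compact open $\ovrP' \subseteq \ovrP$ is just the single point $\{x_0\}$, which is not open. A correct cofinal system is given by $K' \ovrP / \ovrP$ for $K' \subseteq G$ compact open (equivalently, in the paper's setup, the sets $U(b_0^r)$ of \cref{lem-neighborhood-basis}). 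With this correction your stalk computation goes through: a section on $K'\ovrP$ is determined by its restriction to $K'$, and in the colimit over shrinking $K'$ one recovers $W$.
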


\begin{proof}
    Given a smooth $\ovrP$-representation $V$, we can define a smooth $G$-equivariant sheaf $\cH^{(V)}$ on $X$ by setting
    \[
        \cH^{(V)}(U) \, \coloneqq \, \Ind_{\ovrP}^{\pi^{-1}(U)^{-1}} V
    \]
    for a compact open subset $U \subseteq X$.
    This construction gives a functor $V \mapsto \cH^{(V)}$ that is an inverse to $\cH \mapsto \cH_{x_0}$.
    The claim about the global sections then follows because we clearly have $\cH^{(V)}(X) = \Ind_{\ovrP}^G V$.
    See also \parencite[Chapitre III.3.2]{ren} for more details.
\end{proof}

\medskip

\subsection{The Presheaf $\cF$} \label{subsec-presheaf-f}

We define a presheaf $\cF$ of $R$-modules on $X$ by setting
\[
    \cF(U) \, \coloneqq \, \ind_K^{\pi^{-1}(U) K} W
\]
for a compact open subset $U \subseteq X$.
Given an element $g \in G$, we have $\pi^{-1}(g U) = g \pi^{-1}(U)$ so that the action of $g$ on $\ind_K^G W$ restricts to an isomorphism
\[
    g \cdot (\blank) \colon \ind_K^{\pi^{-1}(U) K} W \longrightarrow \ind_K^{\pi^{-1}(g U) K} W \, .
\]
In this way we obtain a $G$-equivariant structure on $\cF$.
Note that we have natural isomorphisms of smooth $\ovrP$-representations
\[
    \cF_{x_0} \, \cong \, \ind_K^{\ovrP K} W \, \cong \, \ind_{K \cap \ovrP}^{\ovrP} W \, .
\]
For convenience let us write $G_U \coloneqq \pi^{-1}(U) K$ and $G^U \coloneqq G \setminus G_U$.

\begin{lemma} \label{lem-hecke-submodule}
    Let $U \subseteq X$ be a compact open subset.
    Then $\ind_K^{G^U} W \subseteq \ind_K^G W$ is stable under the action of $A_M^+$.
\end{lemma}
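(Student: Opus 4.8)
The plan is to reduce the statement to an elementary property of the subset $G^U = G \setminus G_U$ of $G$, using the explicit Hecke formula from \cref{lem-hecke-actions}. First I would note that, since $G_U = \pi^{-1}(U) K$ is stable under right translation by $K$, so is $G^U$, and hence $\ind_K^{G^U} W$ is the $R$-span of the elements $g \cdot 1_{K, w}$ with $g \in G^U$ and $w \in W$; it is the complementary summand of $\cF(U) = \ind_K^{G_U} W$ inside $\ind_K^G W$. Since the $A_M^+$-action is $R$-linear and by \cref{lem-hecke-actions} sends $g \cdot 1_{K, w}$ to $\sum_n g n a \cdot 1_{K, w}$ (the sum over representatives $n$ of $(K \cap N)/a (K \cap N) a^{-1}$), it suffices to prove the purely set-theoretic claim: if $a \in A_M^+$, $n \in K \cap N$, $g \in G$ and $g n a \in G_U$, then $g \in G_U$.

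The key step is to rewrite $G_U$ in a form in which this becomes transparent. Choosing an Iwahori decomposition of $K$ with the $N$-factor on the far right, $K = (K \cap \ovrN)(K \cap M)(K \cap N)$, and using that $(K \cap \ovrN)(K \cap M) \subseteq \ovrP$ while $\pi^{-1}(U)$, being a union of cosets of $\ovrP$, is stable under right translation by $\ovrP$, I obtain $G_U = \pi^{-1}(U) K = \pi^{-1}(U)(K \cap N)$.

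Granting this, the remainder is a one-line computation. If $g n a \in G_U = \pi^{-1}(U)(K \cap N)$, then $g n a \tilde n \in \pi^{-1}(U)$ for some $\tilde n \in K \cap N$. The standing hypothesis $a (K \cap N) a^{-1} \subseteq K \cap N$ on $K$ lets me move $a$ past $\tilde n$: we have $n a \tilde n = n (a \tilde n a^{-1}) a = n'' a$ with $n'' = n (a \tilde n a^{-1}) \in K \cap N$. Thus $g n'' a \in \pi^{-1}(U)$, and since $a \in A_M \subseteq \ovrP$ and $\pi^{-1}(U)$ is right-$\ovrP$-stable, also $g n'' \in \pi^{-1}(U)$, whence $g \in \pi^{-1}(U)(K \cap N) = G_U$, as wanted.

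I expect the only genuinely non-obvious step to be the rewriting $G_U = \pi^{-1}(U)(K \cap N)$ --- that is, recognizing that the ``$\ovrP$-part'' $(K \cap \ovrN)(K \cap M)$ of the Iwahori decomposition of $K$ gets absorbed into $\pi^{-1}(U)$, which is what forces the above ordering of the three factors. The remaining points --- the reduction to generators, the conjugation identity $n a \tilde n = n'' a$, and the final absorption of $a$ --- are routine; the one minor thing worth spelling out is that an Iwahori decomposition of $K$ may indeed be taken with the factors in this order, which follows from the usual one by passing to inverses.
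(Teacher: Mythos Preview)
Your proof is correct and follows essentially the same approach as the paper: both hinge on the identity $G_U = \pi^{-1}(U)(K \cap N)$ obtained from the Iwahori decomposition, together with the conjugation relation $a (K \cap N) a^{-1} \subseteq K \cap N$ and the right-$\ovrP$-stability of $\pi^{-1}(U)$. The only cosmetic difference is that the paper argues directly that $G_U (A_M^+)^{-1} \subseteq G_U$ (via $g n a^{-1} = (g a^{-1})(a n a^{-1})$) and then passes to the complement, whereas you prove the contrapositive $g n a \in G_U \Rightarrow g \in G_U$; the underlying computation is the same.
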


\begin{proof}
    Note that we have $G_U = \pi^{-1}(U) (K \cap N)$ by the Iwahori decomposition.
    For $g \in \pi^{-1}(U)$, $n \in K \cap N$ and $a \in A_M^+$ we now have
    \[
        g n a^{-1} \, = \, (g a^{-1}) (a n a^{-1}) \, \in \, \pi^{-1}(U) (K \cap N) \, ,
    \]
    using that $\pi^{-1}(U)$ is stable under right multiplication by $\ovrP$.
    Thus we see that $G_U (A_M^+)^{-1} \subseteq G_U$ and consequently $G^U A_M^+ \subseteq G^U$; the claim of the lemma then follows.
\end{proof}

Using \Cref{lem-hecke-submodule} and the identification $\ind_K^{G_U} W \cong \ind_K^G W / \ind_K^{G^U} W$, we obtain a right action of $A_M^+$ on $\cF$ that commutes with the $G$-equivariant structure (and is trivial on $\ker(\tau) \cap A_M$).
The isomorphism $\cF_{x_0} \cong \ind_{K \cap \ovrP}^{\ovrP} W$ is compatible with the $A_M^+$-actions on both sides, see \cref{lem-hecke-actions}.

\medskip

\subsection{The sheaf $\ind_{A_M^+}^{A_M} \cF$} \label{subsec-sheaf-locf}

For $g \in G/K$ we define the compact open subset $U(g) \coloneqq g K \ovrP/\ovrP \subseteq X$.
Note that for a compact open subset $U \subseteq X$ we then have
\[
    G_U \, = \, \set[\big]{g \in G}{U(g) \cap U \neq \emptyset} \, .
\]

\begin{lemma} \label{lem-neighborhood-basis}
    For $r \geq 0$ we have
    \[
        U(1) \, = \, \bigsqcup_{n \in (K \cap N)/b_0^r (K \cap N) b_0^{-r}} U(n b_0^r)
    \]
    and these open covers form a cofinal system of open covers of $U(1) \subseteq X$.
\end{lemma}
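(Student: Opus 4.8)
The plan is to reduce the entire statement, via the big Bruhat cell, to the elementary combinatorics of coset decompositions in the profinite group $K \cap N$. First I would use the Iwahori decomposition $K = (K \cap N)(K \cap M)(K \cap \overline{N})$ together with the inclusion $(K \cap M)(K \cap \overline{N}) \subseteq \overline{P}$ to rewrite $K\overline{P} = (K \cap N)\overline{P}$; since $b_0^r \in A_M \subseteq M \subseteq \overline{P}$ fixes the base point $x_0$ and $b_0^r(K \cap N) = N_0^{(r)} b_0^r$ with $N_0^{(r)} \coloneqq b_0^r (K \cap N) b_0^{-r}$, the same manipulation gives $U(n b_0^r) = n b_0^r K \overline{P}/\overline{P} = n N_0^{(r)}\overline{P}/\overline{P} = \pi(n N_0^{(r)})$ for every $n \in K \cap N$, and in particular $U(1) = \pi(K \cap N)$. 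Here $N_0^{(r)}$ is a compact open subgroup of $K \cap N$ by the standing hypothesis on $K$ and $A_M^+$. I would then record, using $N \cap \overline{P} = 1$ and openness of the big cell $N\overline{P} \subseteq G$, that $\pi$ restricts to a homeomorphism of profinite sets $K \cap N \cong U(1)$ carrying the coset $n N_0^{(r)}$ onto $U(n b_0^r)$; this also shows all the sets involved are compact open in $X$.

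Granting this dictionary, the claimed decomposition $U(1) = \bigsqcup_n U(n b_0^r)$ is precisely the coset partition $K \cap N = \bigsqcup_{n \in (K \cap N)/N_0^{(r)}} n N_0^{(r)}$ (a finite partition, $N_0^{(r)}$ having finite index), transported through the homeomorphism above. Since $N_0^{(r')} \subseteq N_0^{(r)}$ for $r' \geq r$, the partition for $r'$ refines that for $r$, so these open covers form a directed family. For the cofinality statement I would again pass through the homeomorphism and reduce to: the partitions $\{n N_0^{(r)}\}_n$ are cofinal, with respect to refinement, among all covers of the profinite group $K \cap N$ by compact open subsets. Any such cover has a finite subcover by compactness, and any finite family of compact open subsets of a profinite group consists of unions of cosets of a small enough open normal subgroup; hence it suffices that $(N_0^{(r)})_{r \geq 0}$ be a neighbourhood basis of $1$ in $K \cap N$. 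As the $N_0^{(r)}$ are open, compact and decreasing, this follows as soon as $\bigcap_{r} N_0^{(r)} = 1$: for any open $U \ni 1$ the sets $N_0^{(r)} \setminus U$ are compact, decreasing and have empty intersection, so one of them is already empty.

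The only non-formal ingredient is thus the identity $\bigcap_{r \geq 0} b_0^r (K \cap N) b_0^{-r} = 1$, i.e.\ that conjugation by $b_0$ contracts $N$ to the identity; this is exactly what it means for $b_0$ to be strongly positive on $N$, and I would cite it from the theory of strongly positive elements (cf.\ property~(5) of the standing assumptions and \parencite[Definition~6.16]{buko}). I expect this contraction property to be the only real obstacle: all the remaining steps are formal bookkeeping with the Iwahori decomposition and elementary profinite topology. A minor point to handle with a little care is the verification that $U(1)$ and the $U(n b_0^r)$ are genuinely compact open subsets of $X$, which I would again deduce from the big cell together with the openness of $\pi$.
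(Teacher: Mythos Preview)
Your argument is correct and follows essentially the same route as the paper: you identify $U(nb_0^r)$ with the image of the coset $n\,b_0^r(K\cap N)b_0^{-r}$ under the open embedding $N\hookrightarrow X$, and then reduce both the disjoint decomposition and the cofinality to the fact that the subgroups $b_0^r(K\cap N)b_0^{-r}$ form a neighbourhood basis of $1$ in $N$, which is precisely the strong positivity of $b_0$. The only cosmetic difference is that the paper phrases the homeomorphism as $N\to X$ onto an open subset rather than restricting to $K\cap N\cong U(1)$, but the content is identical.
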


\begin{proof}
    As $b_0$ is strictly positive, the compact open subgroups $b_0^r (K \cap N) b_0^{-r} \subseteq N$ for $r \geq 0$ form a basis of open neighborhoods of $1 \in N$.
    Now the claim follows as the natural map $N \to X$ is a homeomorphism onto an open subset of $X$ that maps $1$ to $x_0$ and $n b_0^r (K \cap N) b_0^{-r}$ onto $U(n b_0^r)$.
\end{proof}

\begin{lemma} \label{lem-f-loc-sheaf}
    The presheaf $\ind_{A_M^+}^{A_M} \cF \colon U \mapsto \ind_{A_M^+}^{A_M} \cF(U)$ is a sheaf.
\end{lemma}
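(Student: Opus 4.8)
The plan is to verify the sheaf axiom on the basis of compact open subsets of $X$ by reducing to the cofinal system of covers furnished by \cref{lem-neighborhood-basis}. Since $\cF$ is already a sheaf on this basis (being the restriction of $\ind_K^G W$, which decomposes as a direct sum over $G/K$, so sections over $U$ are exactly those sums supported in $G_U$), and since $X$ is profinite, it suffices to check that applying $\ind_{A_M^+}^{A_M}$ preserves the equalizer condition for the specific covers
\[
    U(1) \, = \, \bigsqcup_{n \in (K \cap N)/b_0^r (K \cap N) b_0^{-r}} U(n b_0^r)
\]
and their $G$-translates. Using $G$-equivariance we may assume the cover is centered at $U(1)$. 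Because the pieces $U(n b_0^r)$ are pairwise disjoint, the sheaf condition for $\cF$ on such a cover is the statement that $\cF(U(1)) = \bigoplus_n \cF(U(n b_0^r))$ as $R$-modules, i.e.\ that the restriction maps realize a direct sum decomposition.

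First I would make the direct sum decomposition of $\cF$ on these covers explicit: $\cF(U(g)) = \ind_K^{gK\ovrP K} W$, and the disjointness of the $U(n b_0^r)$ translates into $G_{U(1)} = \bigsqcup_n G_{U(n b_0^r)}$ as a decomposition of the relevant subset of $G/K$, using the Iwahori decomposition exactly as in the proof of \cref{lem-neighborhood-basis} and \cref{lem-hecke-submodule}. So far this is all $A_M^+$-equivariant by \cref{lem-hecke-submodule}. The key point is then that $\ind_{A_M^+}^{A_M}(\blank) = R[A_M] \otimes_{R[A_M^+]} (\blank)$ is a base change along the flat (indeed localization) map $R[A_M^+] \to R[A_M]$ — recall $A_M = A_M^+[b_0^{-1}]$ — so it is exact and commutes with finite (in fact arbitrary) direct sums. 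Applying it to $\cF(U(1)) = \bigoplus_n \cF(U(nb_0^r))$ gives the sheaf identity for $\ind_{A_M^+}^{A_M}\cF$ on this cover. Separatedness (injectivity of the map to the product) and gluing for a general cover follow by refining to a cover of the above type and invoking that a presheaf which is a sheaf on a cofinal system of covers of each basic open, and whose restriction maps behave well, is a sheaf; alternatively one checks the two conditions directly after observing $\cF$ itself is a sheaf and exactness of $\ind_{A_M^+}^{A_M}$ carries the equalizer diagram of $\cF$ to that of $\ind_{A_M^+}^{A_M}\cF$.

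The main obstacle I anticipate is not the exactness of $\ind_{A_M^+}^{A_M}$ — that is immediate from it being a localization — but rather the bookkeeping needed to pass from an arbitrary finite cover of an arbitrary basic compact open $U \subseteq X$ to the standard covers of \cref{lem-neighborhood-basis}: one must check that the equalizer condition for $\cF$ on a general cover, together with compatibility of $\ind_{A_M^+}^{A_M}$ with the (possibly infinite, but locally finite) index set, still yields a sheaf. Here it helps that every compact open $U$ is a finite disjoint union of translates of basic opens of the form $U(g)$, reducing the general sheaf condition to the disjoint case plus the cofinal covers above; on the disjoint pieces the exactness argument applies verbatim. I would also remark that the $A_M^+$-equivariance of all restriction maps (\cref{lem-hecke-submodule}) ensures the resulting $G$-equivariant structure and $A_M$-action on $\ind_{A_M^+}^{A_M}\cF$ are well-defined, so that the sheafification claim is compatible with the structures needed in the sequel.
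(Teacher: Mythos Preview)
Your proposal contains a genuine gap: the claim that $\cF$ is already a sheaf is false, and with it the assertion that the union $G_{U(1)} = \bigcup_n G_{U(nb_0^r)}$ is disjoint. Recall that $G_{U'} = \set{g \in G}{U(g) \cap U' \neq \emptyset}$, so membership only requires that $U(g)$ \emph{meet} $U'$, not that it be contained in it. In particular the identity coset satisfies $1 \in G_{U(nb_0^r)}$ for \emph{every} $n$, since $U(nb_0^r) \subseteq U(1)$. Thus the restriction map $\alpha \colon \cF(U) \to \prod_i \cF(U_i)$ is injective (as $G_U = \bigcup_i G_{U_i}$) but typically not surjective: for $g \in G_{U_1} \cap G_{U_2}$ the element $(g \cdot 1_{K,w}, 0)$ has no preimage. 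So $\cF$ is only a separated presheaf, and exactness of the localization $\ind_{A_M^+}^{A_M}$ alone yields injectivity of the localized map but says nothing about surjectivity.

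The paper's argument supplies exactly the missing surjectivity and uses \cref{lem-neighborhood-basis} in a way your plan does not: one shows that after localizing, $\ind_{A_M^+}^{A_M}\cF(U')$ is generated by elements $g \cdot 1_{K,w}$ with $U(g) \subseteq U'$ rather than merely $U(g) \cap U' \neq \emptyset$. This is achieved by applying the Hecke operator $\bullet\, b_0^r$, which rewrites $g \cdot 1_{K,w}$ as a sum of terms $g n b_0^r \cdot 1_{K,w}$ whose supports $U(gnb_0^r)$ are arbitrarily small by \cref{lem-neighborhood-basis}; for $r \gg 0$ each such support lies entirely inside $U'$ or is disjoint from it. Elements with $U(g) \subseteq U_j$ do lift to $\cF(U)$ and restrict to zero on the other $U_i$, which gives surjectivity of the localized $\alpha$. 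Your proposal should incorporate this step; the flatness observation is correct but not where the content lies.
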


\begin{proof}
    As $X$ is profinite, in order to verify the sheaf condition, it suffices to treat finite disjoint open covers.
    So, let $U = \bigsqcup_{i \in I} U_i$ be such a finite disjoint open cover of a compact open subset $U \subseteq X$.
    We have to show that
    \[
        \alpha \colon \ind_K^{G_U} W \longrightarrow \prod_{i \in I} \ind_K^{G_{U_i}} W
    \]
    becomes an isomorphism after applying $\ind_{A_M^+}^{A_M}$.
    As we have $G_U = \bigcup_i G_{U_i}$, we see that $\alpha$ is injective and thus stays injective after localization.

    In order to show that $\alpha$ becomes surjective after localization, we first claim that for a compact open subset $U' \subseteq X$ the $A_M$-representation $\ind_{A_M^+}^{A_M} \ind_K^{G_{U'}} W$ is generated by elements of the form $g \cdot 1_{K, w}$ where $U(g) \subseteq U'$.
    For this we note that for some general $g \in G_{U'}$ and $w \in W$ we have
    \[
        g \cdot 1_{K, w} \bullet b_0^r \, = \, \sum_n g n b_0^r \cdot 1_{K, w} \, \in \, \ind_K^{G_{U'}} W
    \]
    where the sum is over those $n \in (K \cap N)/b_0^r (K \cap N) b_0^{-r}$ such that $U(g n b_0^r) \cap U' \neq \emptyset$.
    It follows from \cref{lem-neighborhood-basis} that for $r \gg 0$ each $U(g n b_0^r)$ is either completely contained in $U'$ or disjoint to it.
    This proves the claim.

    Now given some $j \in I$, $g \in G$ with $U(g) \subseteq U_j$ and $w \in W$, $\alpha$ maps the element $g \cdot 1_{K, w} \in \ind_K^{G_U} W$ to $(\delta_{i j} g \cdot 1_{K, w})_i$.
    Together with the above claim this implies the desired surjectivity.
\end{proof}

\begin{lemma} \label{lem-computing-stalks}
    We have a natural isomorphism of smooth $\ovrP$-representations
    \[
        \roundbr[\big]{\ind_{A_M^+}^{A_M} \cF}_{x_0} \, \cong \, \ind_{K \cap M}^M W
    \]
    that is compatible with the action of $A_M$.
\end{lemma}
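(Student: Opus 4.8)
The plan is to reduce the claim to a purely algebraic identity for $\ind_{K\cap\ovrP}^{\ovrP}W$ and then to prove that identity by a short exact sequence argument.

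Since $\ind_{A_M^+}^{A_M}$ is given by $R[A_M]\otimes_{R[A_M^+]}(\blank)$ and $R[A_M]$ is the localization of $R[A_M^+]$ obtained by inverting $b_0$ (using $A_M = A_M^+[b_0^{-1}]$), the functor $\ind_{A_M^+}^{A_M}$ is exact and commutes with colimits. By definition $\ind_{A_M^+}^{A_M}\cF$ is the presheaf $U\mapsto\ind_{A_M^+}^{A_M}\cF(U)$, so its stalk at $x_0$ is $\colim_{U\ni x_0}\ind_{A_M^+}^{A_M}\cF(U)$, and pulling $\ind_{A_M^+}^{A_M}$ out of this colimit together with the identification $\cF_{x_0}\cong\ind_{K\cap\ovrP}^{\ovrP}W$ from \cref{subsec-presheaf-f} (compatible with the Hecke $A_M^+$-actions by \cref{lem-hecke-actions}) yields
\[
    \roundbr[\big]{\ind_{A_M^+}^{A_M}\cF}_{x_0}
    \, \cong \, \ind_{A_M^+}^{A_M}\cF_{x_0}
    \, \cong \, \ind_{A_M^+}^{A_M}\ind_{K\cap\ovrP}^{\ovrP}W
\]
naturally and compatibly with the $\ovrP$- and $A_M$-actions. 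It thus suffices to produce a natural isomorphism $\ind_{A_M^+}^{A_M}\ind_{K\cap\ovrP}^{\ovrP}W\cong\ind_{K\cap M}^M W$ of smooth $\ovrP$-representations, compatibly with the $A_M$-action.

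For this I would use the natural $\ovrP$-equivariant restriction map $\rho\colon\ind_{K\cap\ovrP}^{\ovrP}W\to\ind_{K\cap M}^M W$ of \cref{lem-hecke-actions}, induced by $\ovrP\twoheadrightarrow M$ (well-defined because $W$ is trivial on $K\cap\ovrN$). It is surjective, it is $A_M^+$-equivariant, and it identifies $\ind_{K\cap M}^M W$ with the $\ovrN$-coinvariants of $\ind_{K\cap\ovrP}^{\ovrP}W$; in particular its kernel $Z$ is spanned over $R$ by the elements $\bar n\cdot v - v$ with $\bar n\in\ovrN$ and $v\in\ind_{K\cap\ovrP}^{\ovrP}W$. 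Applying the exact functor $\ind_{A_M^+}^{A_M}$ to $0\to Z\to\ind_{K\cap\ovrP}^{\ovrP}W\to\ind_{K\cap M}^M W\to 0$ gives an exact sequence, and since the Hecke $A_M^+$-action on $\ind_{K\cap M}^M W$ is the restriction of the $A_M$-action of \cref{lem-hecke-actions} — which is in turn the restriction of the $M$-action, as $A_M$ is central in $M$ — we have $\ind_{A_M^+}^{A_M}\ind_{K\cap M}^M W\cong\ind_{K\cap M}^M W$ canonically. Hence everything reduces to the claim $\ind_{A_M^+}^{A_M}Z = 0$, i.e.\ that every element of $Z$ is killed by $(\blank)\bullet b_0^r$ for $r\gg 0$.

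As $(\blank)\bullet b_0^r$ is $R$-linear and commutes with the $\ovrP$-action, it is enough to treat a generator $\bar n\cdot v - v$, and since $(\bar n\cdot v - v)\bullet b_0^r = \bar n\cdot(v\bullet b_0^r) - v\bullet b_0^r$, it suffices to show $\bar n$ fixes $v\bullet b_0^r$ once $r$ is large. Writing $v = \sum_i g_i\otimes w_i$ with $g_i\in\ovrP$ and $w_i\in W$, we have $v\bullet b_0^r = \sum_i g_i b_0^r\otimes w_i$, and each summand $g_i b_0^r\otimes w_i$ is fixed by the compact open subgroup $g_i b_0^r (K\cap\ovrN) b_0^{-r} g_i^{-1}$ of $\ovrN$ (here triviality of $W$ on $K\cap\ovrN$ is used). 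Because $b_0$ is strongly positive, $b_0^r(K\cap\ovrN)b_0^{-r}$ increases to $\ovrN$ as $r\to\infty$, hence $g_i b_0^r(K\cap\ovrN)b_0^{-r}g_i^{-1}$ increases to $g_i\ovrN g_i^{-1}=\ovrN$; so for $r$ large enough, uniformly in the finitely many indices $i$, each of these subgroups contains $\bar n$, and therefore $\bar n$ fixes $v\bullet b_0^r$. This finishes the argument. The only step that is not formal is the last one, and it is precisely there that strong positivity of $b_0$ (rather than mere positivity) enters.
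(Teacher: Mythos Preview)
Your proof is correct and follows the same overall reduction as the paper: both arguments first identify the stalk with $\ind_{A_M^+}^{A_M}\ind_{K\cap\ovrP}^{\ovrP}W$ and then show that the $\ovrP$-equivariant projection $\rho$ to $\ind_{K\cap M}^M W$ becomes an isomorphism after inverting $b_0$, the key input being that $\bigcup_r b_0^r(K\cap\ovrN)b_0^{-r}=\ovrN$.

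The technical execution differs slightly. The paper exploits the $M$-equivariant section $s\colon\ind_{K\cap M}^M W\to\ind_{K\cap\ovrP}^{\ovrP}W$, $1_{K\cap M,w}\mapsto 1_{K\cap\ovrP,w}$, and shows it becomes surjective after localization via the identity $\ovrp\, b_0^r=(m b_0^r)(b_0^{-r}\ovrn b_0^r)\in M(K\cap\ovrN)$ for $r\gg 0$; since $s$ is a section, surjectivity yields the isomorphism. You instead identify $\ker\rho$ with the $\ovrN$-augmentation submodule and kill it after localization by showing each generator $\bar n\cdot v-v$ becomes zero under $(\blank)\bullet b_0^r$. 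These are dual ways of packaging the same contraction phenomenon: the paper pushes elements into the image of $s$, you push them into the $\bar n$-fixed part. Your route has the minor advantage of making explicit that $\rho$ realizes $\ind_{K\cap M}^M W$ as the $\ovrN$-coinvariants, while the paper's section argument is marginally shorter since one need not describe the kernel.
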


\begin{proof}
    We have already seen that $\cF_{x_0} \cong \ind_{K \cap \ovrP}^{\ovrP} W$.
    Thus we have to show that the map of smooth $\ovrP$-representations
    \[
        \ind_{K \cap \ovrP}^{\ovrP} W \longrightarrow \ind_{K \cap M}^M W \, , \qquad 1_{K \cap \ovrP, w} \mapsto 1_{K \cap M, w}
    \]
    becomes an isomorphism after applying $\ind_{A_M^+}^{A_M}$ to the left hand side.
    We start by noting that the map $\ind_{K \cap \ovrP}^{\ovrP} W \to \ind_{K \cap M}^M W$ has an $M$-equivariant section
    \[
        \ind_{K \cap M}^M W \longrightarrow \ind_{K \cap \ovrP}^{\ovrP} W \, , \qquad 1_{K \cap M, w} \mapsto 1_{K \cap \ovrP, w}
    \]
    that is again compatible with the action of $A_M^+$.
    We claim that this section becomes surjective after applying $\ind_{A_M^+}^{A_M}$.
    So, suppose we are given $\ovrp \in \ovrP$ and $w \in W$ and write $\ovrp = m \ovrn$ with $m \in M$ and $\ovrn \in \ovrN$.
    For $r \gg 0$ we then have
    \[
        \ovrp b_0^r \, = \, (m b_0^r)(b_0^{-r} \ovrn b_0^r) \, \in \, M (K \cap \ovrN)
    \]
    so that the element
    \[
        \ovrp \cdot 1_{K \cap \ovrP, w} \bullet b_0^r \, = \, \ovrp b_0^r \cdot 1_{K \cap \ovrP, w}
    \]
    is contained in the image of $\ind_{K \cap M}^M W \to \ind_{K \cap \ovrP}^{\ovrP} W$ as desired.
\end{proof}

\begin{theorem} \label{thm-comparison}
    There exists a natural isomorphism of smooth $G$-representations
    \[
        \ind_{A_M^+}^{A_M} \ind_K^G W \, \cong \, \Ind_{\ovrP}^G \ind_{K \cap M}^M W
    \]
    that is compatible with the action of $A_M$ on both sides.
\end{theorem}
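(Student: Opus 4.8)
The plan is to exhibit both sides of the claimed isomorphism as the global sections of one and the same smooth $G$-equivariant sheaf on $X = G/\ovrP$, and then to read the isomorphism off from \cref{prop-classify-equivariant-sheaves}. Concretely, the object to use is $\ind_{A_M^+}^{A_M}\cF$, where $\cF$ is the presheaf of \cref{subsec-presheaf-f}.

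\emph{First, build the sheaf.} The presheaf $\cF$ carries a smooth $G$-equivariant structure together with a commuting right $A_M^+$-action that is trivial on $\ker(\tau)\cap A_M$. Applying the localization functor $\ind_{A_M^+}^{A_M}$ to each value produces a presheaf $\ind_{A_M^+}^{A_M}\cF$; because localization is functorial and the $A_M^+$-action commutes with all restriction maps of $\cF$ and with its $G$-equivariant structure, this presheaf again carries a $G$-equivariant structure. Here one must verify the smoothness axiom, namely that every section of $(\ind_{A_M^+}^{A_M}\cF)(U)$ is fixed by a compact open subgroup of $\Stab_G(U)$: such a section is assembled from finitely many $A_M$-translates of finitely many sections of $\cF(U)$, each of which is fixed by such a subgroup, so intersecting the relevant stabilizers still gives a compact open subgroup that works. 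By \cref{lem-f-loc-sheaf} the presheaf $\ind_{A_M^+}^{A_M}\cF$ is in fact a sheaf.

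\emph{Next, compute the global sections in two ways.} Since $X$ is itself compact open, global sections is just evaluation at $X$, and this commutes with $\ind_{A_M^+}^{A_M}$, so $\Gamma(X,\ind_{A_M^+}^{A_M}\cF) = \ind_{A_M^+}^{A_M}(\cF(X)) = \ind_{A_M^+}^{A_M}\ind_K^G W$, with the $G$-equivariant structure recovering exactly the $G$-action on the left-hand side (the one localized from the $G$-action on $\ind_K^G W$). On the other hand, \cref{prop-classify-equivariant-sheaves} applied to the sheaf $\ind_{A_M^+}^{A_M}\cF$ gives $\Gamma(X,\ind_{A_M^+}^{A_M}\cF)\cong\Ind_{\ovrP}^G(\ind_{A_M^+}^{A_M}\cF)_{x_0}$, and the stalk is identified with $\ind_{K\cap M}^M W$ by \cref{lem-computing-stalks}. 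Composing these two descriptions yields the desired natural isomorphism of smooth $G$-representations. For the $A_M$-equivariance, one tracks the commuting $A_M$-action through the chain: on $\ind_{A_M^+}^{A_M}\cF$ it is the localized Hecke action, which commutes with the $G$-equivariant structure and hence is a natural self-map of the sheaf; on global sections it is the localized Hecke action of \cref{lem-hecke-actions}, and under \cref{prop-classify-equivariant-sheaves} it corresponds to the endomorphism of $\Ind_{\ovrP}^G(-)$ induced by the $A_M$-action on the stalk, which by \cref{lem-computing-stalks} is the Hecke action on $\ind_{K\cap M}^M W$. Thus both isomorphisms are $A_M$-equivariant, hence so is their composite.

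The substantive content is already packaged into \cref{lem-f-loc-sheaf} and \cref{lem-computing-stalks}, so the only real friction I expect lies in the bookkeeping of the previous two paragraphs: checking that $\ind_{A_M^+}^{A_M}\cF$ genuinely satisfies the smoothness condition in the definition of a $G$-equivariant sheaf (so that \cref{prop-classify-equivariant-sheaves} applies verbatim) and that evaluation at the compact space $X$ commutes with the localization functor. Neither point is deep, but both need to be stated with some care.
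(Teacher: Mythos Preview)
Your proposal is correct and follows essentially the same approach as the paper: apply \cref{prop-classify-equivariant-sheaves} to the sheaf $\ind_{A_M^+}^{A_M}\cF$, invoking \cref{lem-f-loc-sheaf} for the sheaf property and \cref{lem-computing-stalks} for the stalk computation. The additional bookkeeping you spell out (smoothness of the equivariant structure, global sections commuting with localization, and the $A_M$-equivariance) is implicit in the paper's one-line proof and is handled correctly in your write-up.
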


\begin{proof}
    Apply \cref{prop-classify-equivariant-sheaves} to $\ind_{A_M^+}^{A_M} \cF$, using \cref{lem-f-loc-sheaf} and \cref{lem-computing-stalks}.
\end{proof}

\medskip
\section{\texorpdfstring{The Right Adjoint $\R_{\ovrP}^G$}{The Right Adjoint RPbarG}} \label{sec-right-adjoint}

As an application of \cref{thm-comparison} we now give a description of the right adjoint
\[
    \R_{\ovrP}^G \colon \Mod_R^{\sm}(G) \longrightarrow \Mod_R^{\sm}(M)
\]
to the parabolic induction functor $\Ind_{\ovrP}^G$, see \parencite[Proposition 4.2]{vig}.

\medskip

Given a smooth $G$-representation $(V, \sigma)$ we have an action of $M^+$ on $V^{N_0}$ that is given by
\[
    m \bullet v \, \coloneqq \, \sum_{n \in N_0/m N_0 m^{-1}} \sigma(n m) v
\]
for $m \in M^+$ and $v \in V^{N_0}$.
This action is smooth as every element $v \in V^{K_i}$ is fixed by the open subgroup $M_i \subseteq M^+$.
This yields a right action of $M^+$ on the pro-object $\formallim_i \ind_{K_i}^G \trivrep$ corepresenting the functor $\Mod_R^{\sm}(G) \to \Mod_R, \, V \mapsto V^{N_0} \cong \colim_i V^{K_i}$ that extends the $A_M^+$-action defined in \cref{lem-hecke-actions} and that is smooth in the sense that for every $i_0 \geq 0$ and $m \in M_{i_0}$ the diagram
\[
\begin{tikzcd}
    \displaystyle\formallim_i \ind_{K_i}^G \trivrep \ar[rr, "(\blank) \bullet m"] \ar[dr, swap, "\pr_{i_0}"]
    & & \displaystyle\formallim_i \ind_{K_i}^G \trivrep \ar[ld, "\pr_{i_0}"]
    \\
    & \ind_{K_{i_0}}^G \trivrep
\end{tikzcd}
\]
is commutative.
Similarly we also obtain a smooth right action of $M$ on the pro-object $\formallim_i \ind_{M_i}^M \trivrep$ corepresenting the forgetful functor $\Mod_R^{\sm}(M) \to \Mod_R$ that extends the $A_M$-action defined earlier.

\begin{lemma} \label{lem-comparison-m-equivariant}
    The isomorphism
    \[
        \formallim_i \ind_{A_M^+}^{A_M} \ind_{K_i}^G \trivrep \, \cong \, \formallim_i \Ind_{\ovrP}^G \ind_{M_i}^M \trivrep
    \]
    in $\Pro(\Mod_R^{\sm}(G))$ from \cref{thm-comparison} is compatible with the action of $M$ on both sides.
\end{lemma}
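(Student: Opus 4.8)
The plan is to reduce the assertion to $M^{+}$-equivariance and then to read the $M^{+}$-action off the sheaf-theoretic construction of the isomorphism. Since $b_{0} \in A_{M}^{+}$ is central in $M$ and $M = M^{+}[b_{0}^{-1}]$, every element of $M$ is of the form $b_{0}^{-k} m$ with $k \geq 0$ and $m \in M^{+}$; in particular $M = A_{M} \cdot M^{+}$. As the isomorphism of \cref{thm-comparison} is already $A_{M}$-equivariant, it therefore suffices to prove that it is equivariant for the two right $M^{+}$-actions.

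First I would show that the right $M^{+}$-action on $\formallim_{i} \ind_{K_{i}}^{G} \trivrep$ is already visible on the pro-sheaf $\formallim_{i} \cF_{i}$, where $\cF_{i}$ denotes the presheaf of \cref{subsec-presheaf-f} attached to $K = K_{i}$ and $W = \trivrep$. Indeed, the computations in \cref{lem-hecke-submodule} and \cref{lem-hecke-actions} go through with $A_{M}^{+}$ replaced by $M^{+}$: for a compact open subset $U \subseteq X$ one still has $G_{U} (M^{+})^{-1} \subseteq G_{U}$, because $\pi^{-1}(U)$ is stable under right multiplication by $M \subseteq \ovrP$ and $m N_{0} m^{-1} \subseteq N_{0}$ for $m \in M^{+}$. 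Hence the $M^{+}$-action preserves, for every such $U$, the sub-pro-object $\formallim_{i} \ind_{K_{i}}^{G \setminus \pi^{-1}(U) K_{i}} \trivrep$ and descends to a right $M^{+}$-action on $\formallim_{i} \cF_{i}$ that commutes with the $G$-equivariant structure and extends the $A_{M}^{+}$-action of \cref{subsec-presheaf-f}. Applying the pro-extension of $\ind_{A_{M}^{+}}^{A_{M}}$, and using that $A_{M}^{+} \subseteq M^{+}$ is central, one obtains a right $M^{+}$-action on $\formallim_{i} \cH_{i}$, $\cH_{i} \coloneqq \ind_{A_{M}^{+}}^{A_{M}} \cF_{i}$, commuting with the $G$-equivariant structure.

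By the functoriality of the equivalence in \cref{prop-classify-equivariant-sheaves}, the isomorphism of \cref{thm-comparison} — obtained as the composite $\formallim_{i} \ind_{A_{M}^{+}}^{A_{M}} \ind_{K_{i}}^{G} \trivrep = \Gamma(X, \formallim_{i} \cH_{i}) \cong \Ind_{\ovrP}^{G} (\formallim_{i} \cH_{i})_{x_{0}} \cong \Ind_{\ovrP}^{G} \formallim_{i} \ind_{M_{i}}^{M} \trivrep$, the last step by \cref{lem-computing-stalks} — is then automatically equivariant for the $M^{+}$-action just built on the left and its transport to the stalk $(\formallim_{i} \cH_{i})_{x_{0}} \cong \formallim_{i} \ind_{M_{i}}^{M} \trivrep$ on the right. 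It thus remains to identify this induced right $M^{+}$-action with the restriction to $M^{+}$ of the right $M$-action fixed before the statement of the lemma. By construction the induced action is pulled back along the restriction maps $\formallim_{i} \ind_{K_{i}}^{G} \trivrep \to \formallim_{i} \ind_{K_{i} \cap \ovrP}^{\ovrP} \trivrep \to \formallim_{i} \ind_{K_{i} \cap M}^{M} \trivrep$, the second arrow becoming, after localization at $A_{M}^{+}$, the isomorphism of \cref{lem-computing-stalks}. That these maps intertwine the $M^{+}$-Hecke action on the left with the restrictions to $M^{+}$ of the given (left-translation) $M$-actions on the $\ovrP$- and $M$-sides follows from the same computation as in the proof of \cref{lem-hecke-actions}, whose only inputs are $m N_{0} m^{-1} \subseteq N_{0}$ and $m \in \ovrP$ for $m \in M^{+}$; here one uses crucially that $N_{0} \cap \ovrP = 1$, so that on the $\ovrP$- and $M$-sides the sums defining the Hecke operators collapse to a single term and the $M^{+}$-action is simply the restriction of the ambient group action. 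The step I expect to be the main obstacle is precisely this final identification: the right $M^{+}$-action is in general defined only on the pro-objects — for a fixed $K_{i}$ the naive Hecke formula makes sense only once $K_{i}$ is small enough relative to the chosen element of $M^{+}$ — so the compatibilities must be unwound through the transition maps of the pro-systems rather than verified termwise; everything else is formal.
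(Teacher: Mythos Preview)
Your proposal is correct, and the underlying mechanism is the same as in the paper: reduce to $M^+$ via $M = A_M \cdot M^+$ and then invoke the computation from \cref{lem-hecke-actions}, now with $m \in M^+$ in place of $a \in A_M^+$. The paper, however, is much more direct. It fixes $m \in M^+$ and $i \geq 0$, chooses $j \geq 0$ so that $m \bullet 1_{K_i}$ is $K_j$-fixed and $m \bullet 1_{M_i}$ is $M_j$-fixed, and simply checks that the square
\[
\begin{tikzcd}
\ind_{K_j}^G \trivrep \ar[r,"(\blank)\bullet m"] \ar[d,"\alpha_j"] & \ind_{K_i}^G \trivrep \ar[d,"\alpha_i"] \\
\ind_{M_j}^M \trivrep \ar[r,"(\blank)\bullet m"] & \ind_{M_i}^M \trivrep
\end{tikzcd}
\]
commutes, by the same argument as in \cref{lem-hecke-actions}. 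Your route---lifting the $M^+$-action to the pro-sheaf $\formallim_i \cF_i$, invoking functoriality of \cref{prop-classify-equivariant-sheaves}, and then identifying the induced action on the stalk---is a more conceptual packaging of the same verification: it makes transparent \emph{why} compatibility on stalks suffices, at the cost of having to articulate what an $M^+$-action on a $G$-equivariant pro-sheaf means. The concern you flag at the end, that the $M^+$-action lives only on the pro-system and not termwise, is exactly what the paper encodes by passing from level $i$ to level $j$ in its square.
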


\begin{proof}
    Suppose we are given $m \in M^+$ and $i \geq 0$ and choose $j \geq 0$ such that $m \bullet 1_{K_i} \in (\ind_{K_i}^G \trivrep)^{K_j}$ and $m \bullet 1_{M_i} \in (\ind_{M_i}^G \trivrep)^{M_j}$.
    We have to show that the diagram
    \[
    \begin{tikzcd}[column sep = large]
        \ind_{K_j}^G \trivrep \ar[r, "(\blank) \bullet m"] \ar[d, "\alpha_j"]
        & \ind_{K_i}^G \trivrep \ar[d, "\alpha_i"]
        \\
        \ind_{M_j}^M \trivrep \ar[r, "(\blank) \bullet m"]
        & \ind_{M_i}^M \trivrep
    \end{tikzcd}
    \]
    is commutative.
    But this follows from the same argument as in the proof of \cref{lem-hecke-actions}.
\end{proof}

\begin{theorem} \label{cor-describing-right-adjoint}
    Let $V$ be a smooth $G$-representation.
    Then we have a natural isomorphism of smooth $M$-representations
    \[
        \R_{\ovrP}^G V \, \cong \, \colim_i \Ind_{A_M^+}^{A_M} V^{K_i} \, .
    \]

    Moreover, given a smooth $M$-representation $W$, the $M^+$-equivariant composition
    \[
        W \xlongrightarrow{\eta_W} \R_{\ovrP}^G \Ind_{\ovrP}^G W \longrightarrow \roundbr[\big]{\Ind_{\ovrP}^G W}^{N_0} \, ,
    \]
    where the first map is the unit of the adjunction between $\Ind_{\ovrP}^G$ and $\R_{\ovrP}^G$ and the second map comes from the description above, is concretely given by
    \[
        w \longmapsto
        \roundbr[\Bigg]{g \mapsto \begin{cases} m \cdot w & \text{if $g = \ovrn m n \in \ovrN M N_0 = \ovrP N_0$} \\ 0 & \text{if $g \notin \ovrP N_0$} \end{cases}} \, .
    \]
\end{theorem}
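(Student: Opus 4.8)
The plan is to derive both parts from \cref{thm-comparison}, using that the pro-objects $\formallim_i \ind_{M_i}^M \trivrep$ and $\formallim_i \ind_{K_i}^G \trivrep$, equipped with their smooth $M^+$-Hecke actions, corepresent the forgetful functor $\Mod_R^{\sm}(M) \to \Mod_R$ and the functor $V \mapsto V^{N_0} = \colim_i V^{K_i}$, respectively.

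For the first assertion, $\bigcap_i M_i = 1$ gives $U = \colim_i U^{M_i}$ for every smooth $M$-representation $U$, so it suffices to identify $(\R_{\ovrP}^G V)^{M_i}$ compatibly in $i$. The adjunctions $\ind_{M_i}^M \dashv \Res$ and $\Ind_{\ovrP}^G \dashv \R_{\ovrP}^G$ give $(\R_{\ovrP}^G V)^{M_i} \cong \Hom_M(\ind_{M_i}^M \trivrep, \R_{\ovrP}^G V) \cong \Hom_G(\Ind_{\ovrP}^G \ind_{M_i}^M \trivrep, V)$, which by \cref{thm-comparison} and the tensor--hom adjunction for $\ind_{A_M^+}^{A_M}(\blank) = R[A_M] \otimes_{R[A_M^+]} (\blank)$ is naturally isomorphic to $\Hom_{A_M^+}(R[A_M], V^{K_i})$ with $V^{K_i}$ carrying its $A_M^+$-Hecke action. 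Since the transition maps of $\colim_i (\R_{\ovrP}^G V)^{M_i}$ are the (injective) inclusions of invariants, each term embeds into the smooth $A_M$-representation $\R_{\ovrP}^G V$, so $\Hom_{A_M^+}(R[A_M], V^{K_i})$ is $A_M$-smooth and equals $\Ind_{A_M^+}^{A_M} V^{K_i}$. Tracking the transition maps through \cref{thm-comparison} (the surjection $\ind_{M_j}^M \trivrep \to \ind_{M_i}^M \trivrep$ corresponds to the transfer map $\ind_{K_j}^G \trivrep \to \ind_{K_i}^G \trivrep$, which is Hecke-equivariant because $N_0$ is independent of $i$) identifies them with $\Ind_{A_M^+}^{A_M}$ of the inclusions $V^{K_i} \hookrightarrow V^{K_j}$, whence $\R_{\ovrP}^G V \cong \colim_i \Ind_{A_M^+}^{A_M} V^{K_i}$ as smooth $A_M$-representations. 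By \cref{lem-comparison-m-equivariant} the $M^+$-Hecke actions on the two corepresenting pro-objects match, and as $b_0 \in A_M^+$ acts invertibly with $M = M^+[b_0^{-1}]$, this upgrades to an isomorphism of smooth $M$-representations.

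For the second assertion, I would unravel the two maps under the above description. The unit $\eta_W$ corresponds to $\mathrm{id}_{\Ind_{\ovrP}^G W}$ under the adjunction, so its restriction to $W^{M_i}$ sends $w$ to the class of $\Ind_{\ovrP}^G(\hat w)$, where $\hat w \colon \ind_{M_i}^M \trivrep \to W$ is the $M$-map with $1_{M_i} \mapsto w$. The second map $\R_{\ovrP}^G(\blank) \to (\blank)^{N_0}$ is, on the $i$-th component, induced by the evaluation $\Ind_{A_M^+}^{A_M}(\blank)^{K_i} \to (\blank)^{K_i}$ followed by $(\blank)^{K_i} \hookrightarrow (\blank)^{N_0}$; via \cref{thm-comparison} this is restriction along the canonical map $\iota_i \colon \ind_{K_i}^G \trivrep \to \ind_{A_M^+}^{A_M} \ind_{K_i}^G \trivrep$, under $\Hom_G(\ind_{K_i}^G \trivrep, \blank) = (\blank)^{K_i}$. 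Hence the composition in the statement sends $w \in W^{M_i}$ to the image of $1_{K_i}$ under the composite $\ind_{K_i}^G \trivrep \xrightarrow{\iota_i} \ind_{A_M^+}^{A_M} \ind_{K_i}^G \trivrep \xrightarrow{\sim} \Ind_{\ovrP}^G \ind_{M_i}^M \trivrep \xrightarrow{\Ind_{\ovrP}^G \hat w} \Ind_{\ovrP}^G W$, regarded inside $(\Ind_{\ovrP}^G W)^{N_0}$; and since $W = \colim_i W^{M_i}$ it is enough to treat $w \in W^{M_i}$.

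It then remains to compute the image of $1_{K_i}$ under the isomorphism of \cref{thm-comparison}, which I would do by unwinding its proof. Let $s = \iota_i(1_{K_i})$, a global section of $\ind_{A_M^+}^{A_M} \cF_i$; by \cref{prop-classify-equivariant-sheaves} the isomorphism $\Gamma(X, \ind_{A_M^+}^{A_M} \cF_i) \cong \Ind_{\ovrP}^G \ind_{M_i}^M \trivrep$ sends $s$ to $g \mapsto (g \cdot s)_{x_0}$. Using the explicit stalk map $\cF_i \to \cF_{i, x_0} \cong \ind_{K_i \cap \ovrP}^{\ovrP} \trivrep$ (projection to the identity Mackey summand: $g K_i \mapsto \ovrp(K_i \cap \ovrP)$ if $g = \ovrp k \in \ovrP K_i$, and $0$ otherwise), the map $\cF_{i, x_0} \to \ind_{M_i}^M \trivrep$ of \cref{lem-computing-stalks} ($\ovrp(K_i \cap \ovrP) \mapsto m M_i$ for $\ovrp = \ovrn m \in \ovrN M$), and the equality $\ovrP K_i = \ovrP N_0$, one finds that $s$ corresponds to the function sending $g = \ovrn m n \in \ovrP N_0$ to $m M_i$ and $g \notin \ovrP N_0$ to $0$ (here one uses $N_0 \subseteq K_i$, so that the unique decomposition $g = \ovrp n$ with $n \in N_0$ also realizes $g \in \ovrP K_i$). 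Postcomposing with $\Ind_{\ovrP}^G \hat w$, i.e.\ applying $\hat w$ pointwise (which sends $m M_i \mapsto m \cdot w$ since $w \in W^{M_i}$), yields exactly the asserted formula; one may additionally verify by hand that $w \mapsto f_w$ is well defined, lands in $(\Ind_{\ovrP}^G W)^{N_0}$, and is $M^+$-equivariant, as a cross-check. The main obstacle is precisely this last step: carefully chasing the generator $1_{K_i}$ through the sheaf-theoretic construction behind \cref{thm-comparison}, composing \cref{prop-classify-equivariant-sheaves}, the Mackey description of the stalk of $\cF$, and \cref{lem-computing-stalks} while keeping track of the $\ovrP$-equivariance conventions and the various left/right translations, where it is easy to drop a twist or an inverse.
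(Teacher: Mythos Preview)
Your proposal is correct and follows essentially the same route as the paper: the same chain of adjunction isomorphisms $(\R_{\ovrP}^G V)^{M_i} \cong \Hom_G(\Ind_{\ovrP}^G \ind_{M_i}^M \trivrep, V) \cong \Hom_G(\ind_{A_M^+}^{A_M} \ind_{K_i}^G \trivrep, V) \cong \Ind_{A_M^+}^{A_M} V^{K_i}$ via \cref{thm-comparison}, then \cref{lem-comparison-m-equivariant} for $M$-equivariance, and for the second part the same reduction to computing the image of $1_{K_i}$ under $\ind_{K_i}^G \trivrep \to \Ind_{\ovrP}^G \ind_{M_i}^M \trivrep$. Your write-up is in fact more explicit than the paper's in two places: you justify why the tensor--hom adjunction lands in the \emph{smooth} induction (by noting the embedding into the smooth representation $\R_{\ovrP}^G V$), and you spell out the stalk computation that the paper records only as an ``observation''.
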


\begin{proof}
    Using \cref{thm-comparison}, we obtain $A_M$-equivariant isomorphisms
    \[ \label{equ-ast}\tag{$\ast$}
    \begin{aligned}
        \roundbr[\big]{\R_{\ovrP}^G V}^{M_i} & \, \cong \, \Hom_M \roundbr[\big]{\ind_{M_i}^M \trivrep, \R_{\ovrP}^G V}  \\
        & \, \cong \, \Hom_G \roundbr[\big]{\Ind_{\ovrP}^G \ind_{M_i}^M \trivrep, V} \\
        & \, \cong \, \Hom_G \roundbr[\big]{\ind_{A_M^+}^{A_M} \ind_{K_i}^G \trivrep, V} \\
        & \, \cong \, \Ind_{A_M^+}^{A_M} V^{K_i} \, ,
    \end{aligned}
    \]
    where in the last step we use Tensor-Hom adjunction. Consequently, this yields an $A_M$-equivariant isomorphism $\R_{\ovrP}^G V \cong \colim_i \Ind_{A_M^+}^{A_M} V^{K_i}$.
    \cref{lem-comparison-m-equivariant} then shows that this isomorphism is $M$-equivariant.

    Now let $W$ be a smooth $M$-representation.
    Then, for $i \geq 0$, we have a commutative diagram
    \[
    \begin{tikzcd}[column sep = large]
        W^{M_i} \ar[r, "\eta_W"] \ar[d, "{\cong}"]
        & \roundbr[\big]{\R_{\ovrP}^G \Ind_{\ovrP}^G W}^{M_i} \ar[d, "{\cong}"]
        \\
        \Hom_M \roundbr[\big]{\ind_{M_i}^M \trivrep, W} \ar[r, "\eta_{W, *}"] \ar[rd, "\Ind_{\ovrP}^G", swap]
        & \Hom_M \roundbr[\big]{\ind_{M_i}^M \trivrep, \R_{\ovrP}^G \Ind_{\ovrP}^G W} \ar[d, "{\cong}"]
        \\
        & \Hom_G \roundbr[\big]{\Ind_{\ovrP}^G \ind_{M_i}^M \trivrep, \Ind_{\ovrP}^G W}
    \end{tikzcd} \, .
    \]
    The second part of the theorem now follows from combining this with the calculation \eqref{equ-ast} and the observation that the $G$-equivariant map $\ind_{K_i}^G \trivrep \to \Ind_{\ovrP}^G \ind_{M_i}^M \trivrep$ from \cref{thm-comparison} sends the element $1_{K_i} \in \ind_{K_i}^G \trivrep$ to the element in $\Ind_{\ovrP}^G \ind_{M_i}^M \trivrep$ defined by
    \[
        g \longmapsto
        \begin{cases} m \cdot 1_{M_i} & \text{if $g = \ovrn m n \in \ovrN M N_0 = \ovrP N_0$} \\ 0 & \text{if $g \notin \ovrP N_0$} \end{cases} \, . \qedhere
    \]
\end{proof}

\medskip

Now assume that $R$ is Noetherian.
Then we have Emerton's ordinary parts functor
\[
    \Ord_P^G \colon \Mod_R^{\sm}(G) \longrightarrow \Mod_R^{\sm, \lf{A_M}}(M) \, , \qquad V \mapsto \roundbr[\big]{\Ind_{M^+}^M V^{N_0}}^{\lf{A_M}} \, ,
\]
see \parencite{em1} and \parencite[]{vig}.
When $R$ is Artinian, then on locally admissible representations $\Ord_P^G$ has the following alternative description.

\begin{lemma} \label{lem:comparing-ind-big-small}
    Suppose that $R$ is an Artinian local ring with residue field of characteristic $p$.
    Then, for a locally finite smooth $A_M^+$-representation, the natural $A_M$-equivariant map
    \[
        \roundbr[\big]{\Ind_{A_M^+}^{A_M} V}^{\lf{A_M}} \longrightarrow \ind_{A_M^+}^{A_M} V
    \]
    is an isomorphism.

    Consequently, for a locally admissible smooth $G$-representation $V$, we have a natural isomorphism of smooth $M$-representations
    \[
        \Ord_P^G V \, \cong \, \ind_{M^+}^M V^{N_0} \, .
    \]
\end{lemma}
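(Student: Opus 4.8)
The plan is to prove the first assertion directly and then to run the same argument with $M^+\subseteq M$ in place of $A_M^+\subseteq A_M$ for the second. Throughout I use that, since $b_0\in A_M^+$ is central in $A_M$ and $A_M=A_M^+[b_0^{-1}]$, we have $R[A_M]=R[A_M^+][b_0^{-1}]$; thus $\ind_{A_M^+}^{A_M}V=R[A_M]\otimes_{R[A_M^+]}V$ is the localization $V[b_0^{-1}]$, whereas an element of $\Ind_{A_M^+}^{A_M}V=\Hom_{A_M^+}(R[A_M],V)^{\sm}$ is a smooth function $\varphi\colon A_M\to V$ with $\varphi(ax)=a\varphi(x)$ for $a\in A_M^+$. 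The natural map in question, $(\Ind_{A_M^+}^{A_M}V)^{\lf{A_M}}\to\ind_{A_M^+}^{A_M}V$, sends $\varphi$ to the class of $\varphi(1)$; it is $A_M$-equivariant, the only nontrivial point being that $b_0\varphi(b_0^{-1})=\varphi(1)$, so that $[\varphi(b_0^{-1})]=b_0^{-1}[\varphi(1)]$ in $V[b_0^{-1}]$.

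The hypotheses enter as follows. Since $R$ is Artinian, every finitely generated $R$-module has finite length; since $V$ is $A_M^+$-locally finite it is the filtered union of its finitely generated $A_M^+$-stable submodules, each of finite length. Applying Fitting's lemma to the action of the central element $b_0$ on these and passing to the union gives a canonical $A_M^+$-stable decomposition $V=V'\oplus V''$ with $b_0$ acting locally nilpotently on $V'$ and invertibly on $V''$; in particular $\ind_{A_M^+}^{A_M}V=V[b_0^{-1}]=V''$.

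The crux is the claim that every $\varphi\in(\Ind_{A_M^+}^{A_M}V)^{\lf{A_M}}$ has $\varphi(1)\in V''$ and is determined by $\varphi(1)$ through $\varphi(ab_0^{-n})=ab_0^{-n}\varphi(1)$ (the right-hand side being meaningful as $b_0$ is invertible on $V''$). To prove this, put $M_0:=\set{\psi(1)}{\psi\in R[A_M]\varphi}\subseteq V$. It is finitely generated over $R$, being the image of the finitely generated module $R[A_M]\varphi$ under evaluation at $1$, and it is $A_M^+$-stable because $(a\cdot\psi)(1)=\psi(a)=a\psi(1)$ for $a\in A_M^+$; hence it has finite length. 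Now every value $\varphi(b_0^{-n})=(b_0^{-n}\cdot\varphi)(1)$ lies in $M_0$, and $b_0\varphi(b_0^{-n})=\varphi(b_0^{-n+1})$. Fitting's lemma on $M_0$ then forces the $V'$-components of all $\varphi(b_0^{-n})$ to vanish: if $b_0^{N}$ annihilates $M_0\cap V'$, the $V'$-component of $\varphi(b_0^{-k})$ equals $b_0^{N}$ times that of $\varphi(b_0^{-k-N})$, hence is zero. Therefore $\varphi(b_0^{-n})\in V''$, so $\varphi(b_0^{-n})=b_0^{-n}\varphi(1)$, and the general formula follows from $A_M^+$-equivariance. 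Given the claim, $\varphi\mapsto[\varphi(1)]$ is injective on the locally finite part, and it is surjective: given $v\in V''=\ind_{A_M^+}^{A_M}V$, pick a finite-length $A_M^+$-stable submodule containing $v$ and define $\varphi$ by the formula of the claim; then $\varphi$ is smooth and $A_M$-locally finite (its values lie in that finite-length submodule) and maps to $v$.

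For the second assertion I would first record that a locally admissible $G$-representation $V$ has $V^{N_0}$, with its $M^+$-Hecke action, $A_M^+$-locally finite: for $v\in V^{N_0}$ one has $R[M^+]\bullet v\subseteq(R[G]v)^{N_0}$, which is finitely generated over $R$ since $R[G]v$ is admissible and $R$ is Noetherian. The argument above then applies verbatim with $(M^+,M)$ in place of $(A_M^+,A_M)$: one has $R[M]=R[M^+][b_0^{-1}]$ since $M=M^+[b_0^{-1}]$ with $b_0$ central, so $\ind_{M^+}^M V^{N_0}=(V^{N_0})[b_0^{-1}]$; the Fitting decomposition and the finite-length module $M_0$ use only the central element $b_0$ together with the $A_M^+$-action (which is why $M_0$ stays $A_M^+$-stable), and the resulting isomorphism $(\Ind_{M^+}^M V^{N_0})^{\lf{A_M}}\cong(V^{N_0})[b_0^{-1}]$ is $M$-equivariant, being equivariant for the generating submonoid $M^+$ and for $b_0^{-1}$. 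Unwinding $\Ord_P^G V=(\Ind_{M^+}^M V^{N_0})^{\lf{A_M}}$ gives the claim. I expect the step of the third paragraph to be the main obstacle: its content is that $A_M$-local finiteness of $\varphi$ is exactly what confines all of its coordinates $\varphi(b_0^{-n})$ to a single finite-length submodule, after which the centrality of $b_0$ makes Fitting's lemma applicable and rigidifies $\varphi$; the remaining verifications (well-definedness and equivariance of the natural maps, and smoothness and local finiteness in the surjectivity step) are routine.
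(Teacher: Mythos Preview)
Your proof is correct and follows essentially the same approach as the paper. The paper's own proof simply cites \parencite[Lemma 3.2.1]{em2} for the first assertion and then notes that $V^{N_0}$ is $A_M^+$-locally finite for the second; what you have done is unpack that citation, reproducing the Fitting--lemma argument that underlies Emerton's result (the identification $\Ind_{A_M^+}^{A_M}V\cong\lim_{b_0}V$, the splitting $V=V'\oplus V''$ into $b_0$-locally nilpotent and $b_0$-bijective parts, and the observation that $A_M$-local finiteness confines the values $\varphi(b_0^{-n})$ to a single finite-length submodule). Your extension of the argument from $(A_M^+,A_M)$ to $(M^+,M)$ for the second part is likewise the intended mechanism behind the paper's one-line deduction.
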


\begin{proof}
    The first part is \parencite[Lemma 3.2.1]{em2}.
    The second part then follows because for a locally admissible $G$-representation $V$ we have that $V^{N_0}$ is $A_M^+$-locally finite.
\end{proof}

From \cref{cor-describing-right-adjoint} we recover the following result by Vignéras.

\begin{prop}[{\parencite[Corollary 7.3]{vig}}] \label{cor-describing-ord}
    We have
    \[
        \Ord_P^G \, \cong\, (\blank)^{\lf{A_M}} \circ \R_{\ovrP}^G \, ,
    \]
    i.e.\ $\Ord_P^G$ is right adjoint to the restriction of $\Ind_{\ovrP}^G$ to the subcategory $\Mod_R^{\sm, \lf{A_M}}(M) \subseteq \Mod_R^{\sm}(M)$.
\end{prop}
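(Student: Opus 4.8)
The plan is to show that both $\Ord_P^G$ and $(\blank)^{\lf{A_M}} \circ \R_{\ovrP}^G$ are right adjoint to the restriction of $\Ind_{\ovrP}^G$ to $\Mod_R^{\sm, \lf{A_M}}(M)$ and then to conclude by uniqueness of adjoints. For $(\blank)^{\lf{A_M}} \circ \R_{\ovrP}^G$ this is formal: $(\blank)^{\lf{A_M}}$ is right adjoint to the inclusion $\Mod_R^{\sm, \lf{A_M}}(M) \hookrightarrow \Mod_R^{\sm}(M)$ and $\R_{\ovrP}^G$ is right adjoint to $\Ind_{\ovrP}^G$ (see \parencite[Proposition 4.2]{vig} and \cref{cor-describing-right-adjoint}), so that $\Hom_M(U, (\R_{\ovrP}^G V)^{\lf{A_M}}) = \Hom_M(U, \R_{\ovrP}^G V) = \Hom_G(\Ind_{\ovrP}^G U, V)$ for all $U \in \Mod_R^{\sm, \lf{A_M}}(M)$ and $V \in \Mod_R^{\sm}(G)$. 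Hence the whole content lies in the adjointness of $\Ord_P^G$, equivalently in a natural isomorphism $\Ord_P^G V \cong (\R_{\ovrP}^G V)^{\lf{A_M}}$.

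The next step is to write down the comparison morphism. Using the description $\R_{\ovrP}^G V \cong \colim_i \Ind_{A_M^+}^{A_M} V^{K_i}$ of \cref{cor-describing-right-adjoint}, evaluation at the identity coset gives compatible $A_M^+$-equivariant maps $\Ind_{A_M^+}^{A_M} V^{K_i} \to V^{K_i} \hookrightarrow V^{N_0}$, whose colimit is a natural map $\R_{\ovrP}^G V \to V^{N_0}$; by \cref{lem-comparison-m-equivariant} it is $M^+$-equivariant for the Hecke action on $V^{N_0}$. By the adjunction $(\Res_{M^+}^M, \Ind_{M^+}^M)$ this corresponds to a natural $M$-equivariant map $\R_{\ovrP}^G V \to \Ind_{M^+}^M V^{N_0}$, and applying $(\blank)^{\lf{A_M}}$ gives a natural morphism $\Psi_V \colon (\R_{\ovrP}^G V)^{\lf{A_M}} \to (\Ind_{M^+}^M V^{N_0})^{\lf{A_M}} = \Ord_P^G V$, which I claim is an isomorphism.

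To verify this I would use that the subgroups $M_i$ form a neighbourhood basis of $1 \in M$ — this follows from $\bigcap_i M_i = 1$ and the compactness of $M_0$ — so that every smooth $M$-representation is the union of its $M_i$-invariants and $\Psi_V$ is an isomorphism as soon as $\Psi_V^{M_i}$ is for all $i$. Since $(\blank)^{\lf{A_M}}$ and $(\blank)^{M_i}$ commute, the chain of isomorphisms $(\ast)$ in the proof of \cref{cor-describing-right-adjoint} (which rests on \cref{thm-comparison}) gives $((\R_{\ovrP}^G V)^{\lf{A_M}})^{M_i} \cong (\Ind_{A_M^+}^{A_M} V^{K_i})^{\lf{A_M}}$, whereas by definition $(\Ord_P^G V)^{M_i} = ((\Ind_{M^+}^M V^{N_0})^{M_i})^{\lf{A_M}}$. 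So everything comes down to identifying $(\Ind_{M^+}^M V^{N_0})^{M_i}$, after passing to $A_M$-locally finite parts, with $\Ind_{A_M^+}^{A_M} V^{K_i}$, compatibly with the transition maps in $i$ and with $\Psi_V^{M_i}$.

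I expect this last identification to be the main obstacle. Concretely, using $M = M^+ A_M$ one can present $\Ind_{M^+}^M V^{N_0}$ by systems indexed by the powers of $b_0$, and unravelling the $M_i$-invariance together with the Hecke action and the smoothness condition describes such a system in terms of the fixed vectors $V^{N_0 M_i}$; the delicate point is that this space is a priori larger than $V^{K_i} = V^{N_0 M_i \ovrN_i}$, so one has to show that imposing $A_M$-local finiteness, together with the compatibility relations along $b_0$ and the Iwahori decomposition of the $K_i$, makes the resulting object agree with $\Ind_{A_M^+}^{A_M} V^{K_i}$. Once this bookkeeping is carried out, naturality in $V$ and compatibility with the inclusions $V^{K_i} \hookrightarrow V^{K_{i+1}}$ are routine.
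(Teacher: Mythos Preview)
Your comparison map $\Psi_V$ is the right one, and it coincides with the map the paper uses. The difficulty you run into is entirely self-inflicted: by passing to $M_i$-invariants you are forced to compare $(\Ind_{A_M^+}^{A_M} V^{K_i})^{\lf{A_M}}$ with $(\Ind_{A_M^+}^{A_M} V^{P_i})^{\lf{A_M}}$ at a \emph{fixed} level $i$, and as you note, $V^{P_i} = V^{N_0 M_i}$ is genuinely larger than $V^{K_i} = V^{N_0 M_i \ovrN_i}$. This level-wise identity is in fact true (it follows a posteriori once $\Psi_V$ is known to be an isomorphism), but proving it directly amounts to the original problem and your sketch does not supply the missing argument. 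So as written the proof has a gap at exactly the point you flag.

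The paper bypasses this completely by staying at the colimit. Writing $V^{N_0} = \colim_i V^{K_i}$, the map in question is simply
\[
    \colim_i \Ind_{A_M^+}^{A_M} V^{K_i} \longrightarrow \Ind_{A_M^+}^{A_M} \roundbr[\big]{\colim_i V^{K_i}} \, \cong \, \Ind_{M^+}^M V^{N_0} \, .
\]
Now apply $(\blank)^{\lf{A_M}}$ and use two facts: $(\blank)^{\lf{A_M}}$ commutes with filtered colimits, and $(\Ind_{A_M^+}^{A_M}(\blank))^{\lf{A_M}}$ commutes with filtered colimits (this is \parencite[Lemma~3.2.2]{em1}). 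Both sides become $\colim_i (\Ind_{A_M^+}^{A_M} V^{K_i})^{\lf{A_M}}$ and the map is the identity. No Iwahori bookkeeping, no comparison of $V^{P_i}$ with $V^{K_i}$, is needed; the discrepancy between them is absorbed by the colimit, since $\colim_i V^{K_i} = V^{N_0} = \colim_i V^{P_i}$.
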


\begin{proof}
    Let $V$ be a smooth $G$-representation.
    Then we have to show that the natural map
    \[
        \colim_i \Ind_{A_M^+}^{A_M} V^{K_i} \longrightarrow \Ind_{A_M^+}^{A_M} V^{N_0} \cong \Ind_{M^+}^M V^{N_0}
    \]
    becomes an isomorphism after applying $(\blank)^{\lf{A_M}}$.
    This follows because both
    \[
        (\blank)^{\lf{A_M}} \colon \Mod_R^{\sm}(A_M) \longrightarrow \Mod_R^{\sm, \lf{}}(A_M)
        \quad \text{and} \quad
        \roundbr[\big]{\Ind_{A_M^+}^{A_M} (\blank)}^{\lf{A_M}} \colon \Mod_R^{\sm}(A_M^+) \longrightarrow \Mod_R^{\sm, \lf{}}(A_M)
    \]
    commute with filtered colimits, see \parencite[Lemma 3.2.2]{em1} for the second assertion.
\end{proof}

Using this we can now deduce that $\R_{\ovrP}^G$ and $\Ord_P^G$ agree on admissible representations, see \cref{cor-admissible-rpg-ord} below.

\begin{lemma} \label{lem:ind-finite}
    Let $V$ be a finite smooth $A_M^+$-representation.
    Then $\Ind_{A_M^+}^{A_M} V$ is again finite.
\end{lemma}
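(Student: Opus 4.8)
The plan is to reduce the claim to an inverse‑limit computation and then invoke the Noetherian hypothesis on $R$. Write $\phi \colon V \to V$ for the $R$‑linear endomorphism given by the action of the element $b_0 \in A_M^+$ (for which $A_M = A_M^+[b_0^{-1}]$). First I would observe that, since $A_M$ is commutative and $A_M = A_M^+[b_0^{-1}]$, the group ring $R[A_M]$ is the increasing union of the submodules $b_0^{-n} R[A_M^+]$ for $n \ge 0$, each isomorphic to $R[A_M^+]$ as a left $R[A_M^+]$‑module via multiplication by $b_0^n$, and that under these identifications the inclusion $b_0^{-n} R[A_M^+] \hookrightarrow b_0^{-(n+1)} R[A_M^+]$ becomes multiplication by $b_0$. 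This yields an isomorphism of left $R[A_M^+]$‑modules
\[
    R[A_M] \;\cong\; \colim\bigl( R[A_M^+] \xrightarrow{\; b_0 \cdot \;} R[A_M^+] \xrightarrow{\; b_0 \cdot \;} \cdots \bigr).
\]
Applying $\Hom_{A_M^+}(-, V)$, using $\Hom_{A_M^+}(R[A_M^+], V) \cong V$, and noting that the dual of multiplication by $b_0$ is the action of $b_0$, i.e.\ $\phi$, I would identify the underlying $R$‑module of $\Ind_{A_M^+}^{A_M} V = \Hom_{A_M^+}(R[A_M], V)^{\sm}$ with an $R$‑submodule of
\[
    V^{\wedge} \;\coloneqq\; \varprojlim\bigl( \cdots \xrightarrow{\; \phi \;} V \xrightarrow{\; \phi \;} V \bigr) \;=\; \set[\big]{(v_n)_{n \ge 0}}{v_n \in V \text{ and } \phi(v_{n+1}) = v_n \text{ for all } n} .
\]
It then suffices to show that $V^{\wedge}$ is a finitely generated $R$‑module.

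The key step I would carry out is to prove that the projection $p_0 \colon V^{\wedge} \to V$, $(v_n)_n \mapsto v_0$, is injective — this is the only place the hypotheses on $R$ and $V$ enter. Since $R$ is Noetherian and $V$ is finitely generated, the ascending chain $\ker \phi \subseteq \ker \phi^2 \subseteq \cdots$ of $R$‑submodules of $V$ stabilizes, say $\ker \phi^s = \ker \phi^{s+k}$ for all $k \ge 0$. Now take $(v_n)_n \in V^{\wedge}$ with $v_0 = 0$. The relation $v_0 = \phi^n(v_n)$ forces $v_n \in \ker \phi^n$ for every $n$; for $n \ge s$ one moreover has $v_n = \phi^s(v_{n+s})$ with $v_{n+s} \in \ker \phi^{n+s} = \ker \phi^s$, so $v_n = 0$; and for $n < s$ one has $v_n = \phi^{s-n}(v_s) = 0$. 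Hence $(v_n)_n = 0$, so $p_0$ is injective.

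Combining the two steps, $V^{\wedge}$ is isomorphic to an $R$‑submodule of the finitely generated module $V$ over the Noetherian ring $R$, hence is itself finitely generated; therefore so is its $R$‑submodule $\Ind_{A_M^+}^{A_M} V$. As $\Ind_{A_M^+}^{A_M} V$ is smooth by construction, it lies in $\Mod_R^{\sm, \f}(A_M)$, which is the assertion.

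The step I expect to demand the most care is the bookkeeping in the first paragraph: one has to use the hypothesis $A_M = A_M^+[b_0^{-1}]$ to present $R[A_M]$ as the displayed colimit of copies of $R[A_M^+]$, keep the left $R[A_M^+]$‑module structures straight, and check that dualizing turns the transition maps into $\phi$ rather than something else. It is also worth stressing why $\Ind$, and not the small induction, must appear: $\ind_{A_M^+}^{A_M} V = R[A_M] \otimes_{R[A_M^+]} V$ is the localization $V[\phi^{-1}]$, which need not be finitely generated over $R$ — for instance it is $\bbZ[1/2]$ when $R = \bbZ$ and $\phi$ is multiplication by $2$ on $V = \bbZ$ — so it is essential that $\Hom$ produces an inverse limit, which a finiteness argument can control, rather than a localization, which it cannot.
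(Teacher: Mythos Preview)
Your proof is correct and follows essentially the same route as the paper's: both identify $\Ind_{A_M^+}^{A_M} V$ with (a submodule of) the inverse limit $\varprojlim_{b_0} V$ and then use that $V$, being finitely generated over the Noetherian ring $R$, has bounded $b_0^{\infty}$-torsion to conclude that the projection to $V$ is injective. Your write-up simply unpacks in detail what the paper states tersely as ``bounded $b_0^{\infty}$-torsion'' and the resulting injectivity of the evaluation map.
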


\begin{proof}
    As $V$ is a finitely generated module over a Noetherian ring it has bounded $b_0^{\infty}$-torsion and thus the evaluation map
    \[
        \Ind_{A_M^+}^{A_M} V \, \cong \, \lim_{b_0} V^{K_i} \longrightarrow V
    \]
    is injective.
    Thus $\Ind_{A_M^+}^{A_M} V$ is again finite as desired.
    See also \parencite[Lemma 3.1.5]{em1}.
\end{proof}

\begin{prop} \label{cor-admissible-rpg-ord}
    Let $V$ be an admissible smooth $G$-representation.
    Then we have
    \[
        \R_{\ovrP}^G V \, \cong \, \Ord_P^G V \, .
    \]
\end{prop}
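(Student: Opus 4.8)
The plan is to deduce this from Proposition~\ref{cor-describing-ord}, which identifies $\Ord_P^G$ with $(\blank)^{\lf{A_M}} \circ \R_{\ovrP}^G$, together with the explicit description $\R_{\ovrP}^G V \cong \colim_i \Ind_{A_M^+}^{A_M} V^{K_i}$ from Theorem~\ref{cor-describing-right-adjoint}. Since we already know $\Ord_P^G V = (\R_{\ovrP}^G V)^{\lf{A_M}} \subseteq \R_{\ovrP}^G V$, it suffices to show that $\R_{\ovrP}^G V$ is itself $A_M$-locally finite when $V$ is admissible, i.e.\ that the inclusion is an equality.

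First I would fix $i \geq 0$ and analyze the term $\Ind_{A_M^+}^{A_M} V^{K_i}$. Admissibility of $V$ means exactly that $V^{K_i}$ is a finitely generated $R$-module, hence a finite smooth $A_M^+$-representation (smoothness holds because $A_M^+ \cap K_i$ acts trivially and $A_M^+/(A_M^+ \cap K_i)$ is finitely generated, so the $A_M^+$-action factors appropriately). By Lemma~\ref{lem:ind-finite}, $\Ind_{A_M^+}^{A_M} V^{K_i}$ is then again a finite smooth $A_M$-representation; in particular it is $A_M$-locally finite. Now $\R_{\ovrP}^G V \cong \colim_i \Ind_{A_M^+}^{A_M} V^{K_i}$ is a filtered colimit of $A_M$-locally finite representations, and since $\Mod_R^{\sm, \lf{A_M}}(M)$ is stable under arbitrary direct sums (as recorded in Section~\ref{sec-setup}) and hence under filtered colimits, $\R_{\ovrP}^G V$ is itself $A_M$-locally finite. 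Therefore $(\R_{\ovrP}^G V)^{\lf{A_M}} = \R_{\ovrP}^G V$, and combining with Proposition~\ref{cor-describing-ord} gives $\Ord_P^G V \cong \R_{\ovrP}^G V$.

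I expect the only real subtlety to be checking that the colimit of the locally finite pieces is taken in a way that genuinely preserves local finiteness — one must be slightly careful that the transition maps $\Ind_{A_M^+}^{A_M} V^{K_i} \to \Ind_{A_M^+}^{A_M} V^{K_j}$ are maps of $A_M$-representations (which they are, being induced from the $A_M$-equivariant inclusions $V^{K_j} \hookrightarrow V^{K_i}$), so that the colimit lands in $\Mod_R^{\sm, \lf{A_M}}(M)$ and not merely in $\Mod_R^{\sm}(M)$. Everything else is a routine assembly of the already-established facts; no new obstacle arises. Alternatively, one can bypass the local-finiteness bookkeeping entirely by invoking Lemma~\ref{lem:comparing-ind-big-small} when $R$ is Artinian local with residue field of characteristic $p$, but the argument above works for general Noetherian $R$ and is the cleaner route.
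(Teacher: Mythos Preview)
Your proof is correct and follows essentially the same route as the paper: admissibility makes each $V^{K_i}$ finite, Lemma~\ref{lem:ind-finite} then makes each $\Ind_{A_M^+}^{A_M} V^{K_i}$ finite, hence the colimit $\R_{\ovrP}^G V$ is $A_M$-locally finite, and one concludes via Proposition~\ref{cor-describing-ord}. The only slip is a harmless index swap in your parenthetical remark --- the transition maps are induced from the $A_M^+$-equivariant inclusions $V^{K_i} \hookrightarrow V^{K_j}$ for $j \geq i$, not the other way around.
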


\begin{proof}
    As $V^{K_i}$ is finitely generated over $R$ by assumption the same is true for $\Ind_{A_M^+}^{A_M} V^{K_i}$ by \cref{lem:ind-finite}.
    Thus $\R_{\ovrP}^G V$ is $A_M$-locally finite so that the claim follows from \cref{cor-describing-ord}.
\end{proof}

\medskip
\section{\texorpdfstring{The Right Derived Functors of $\R_{\ovrP}^G$ and $\Ord_P^G$}{The Right Derived Functors of RPbarG and OrdPG}} \label{sec-derived-functors}

From now on we assume that $R$ is an Artinian local ring with residue field of characteristic $p$.

\medskip

\subsection{The Right Derived Functors of $\R_{\ovrP}^G$} \label{subsec-right-derived-r}

For the moment fix a torsionfree compact open subgroup $K \subseteq G$ that admits an Iwahori decomposition with respect to $(N, M, \ovrN)$ and such that we have $a (K \cap N) a^{-1} \subseteq K \cap N$ and $a^{-1} (K \cap \ovrN) a \subseteq K \cap \ovrN$ for all $a \in A_M^+$, and write $\sovrA_M \coloneqq A_M/(K \cap A_M)$ and $\sovrA_M^+ \coloneqq A_M^+/(K \cap A_M)$.

We now first consider the functor
\[
    \R_{\ovrP, K}^G \colon \Mod_R^{\sm}(G) \longrightarrow \Mod_R(\sovrA_M) \subseteq \Mod_R^{\sm}(A_M) \, , \qquad V \mapsto \Ind_{\sovrA_M^+}^{\sovrA_M} V^K \, .
\]
For a smooth $G$-representation $V$ it follows from \cref{cor-describing-right-adjoint} that we have an $A_M$-equivariant isomorphism $\R_{\ovrP}^G V \cong \colim_i \R_{\ovrP, K_i}^G V$.
In the following we write $\bbR^G_{\ovrP} \coloneqq \bbR \R^G_{\ovrP}$ and $\bbR^G_{\ovrP, K} \coloneqq \bbR \R^G_{\ovrP, K}$.

\begin{lemma} \label{lem-derived-comp-invariants-ind}
    For a K-injective complex $I^{\bullet} \in \cK(\Mod_R^{\sm}(G))$ the complex $I^{\bullet, K} \in \cK(\Mod_R(\sovrA_M^+))$ is acyclic for the functor $\Ind_{\sovrA_M^+}^{\sovrA_M} \colon \Mod_R(\sovrA_M^+) \to \Mod_R(\sovrA_M)$.

    Consequently, we have a natural equivalence
    \[
        \bbR_{\ovrP, K}^G \, \simeq \, \bbR {\Ind_{\sovrA_M^+}^{\sovrA_M}} \circ \bbR (\blank)^K
    \]
    of functors $\cD(G) \to \cD(\sovrA_M)$.
\end{lemma}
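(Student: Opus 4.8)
\emph{Sketch of the argument.} The plan is to reduce both sides of the claimed equivalence to $\bbR\Hom$ out of the single object $\Ind_{\ovrP}^G \ind_{K \cap M}^M \trivrep$ by means of \cref{thm-comparison}. The point is that a K-injective complex $I^{\bullet}$ in $\Mod_R^{\sm}(G)$ computes $\bbR\Hom_G(Y, I^{\bullet})$ for \emph{every} object $Y$, so all that is needed is to push the localized small induction hidden inside $\R_{\ovrP, K}^G$ past the $\Hom$ via the comparison isomorphism.

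First I would record the relevant identifications. Set $A \coloneqq R[\sovrA_M^+]$ and $A' \coloneqq R[\sovrA_M]$. Since $\sovrA_M = \sovrA_M^+[\bar b_0^{-1}]$, the ring $A'$ is the localization $A_{\bar b_0}$ of $A$ at the image $\bar b_0$ of $b_0$, and in particular $A'$ is flat over $A$. The smooth $G$-representation $\ind_K^G \trivrep$ carries the commuting right $A$-module structure coming from the Hecke action of \cref{lem-hecke-actions} (trivial on $A_M \cap K$), and with respect to these structures one has natural identifications $(\blank)^K \cong \Hom_G(\ind_K^G \trivrep, \blank)$ of functors $\Mod_R^{\sm}(G) \to \Mod_R(\sovrA_M^+)$, $\Ind_{\sovrA_M^+}^{\sovrA_M} \cong \Hom_A(A', \blank)$ and $\ind_{\sovrA_M^+}^{\sovrA_M}(\blank) = A' \otimes_A (\blank)$; thus $\R_{\ovrP, K}^G \cong \Hom_A\bigl(A', \Hom_G(\ind_K^G \trivrep, \blank)\bigr)$. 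Moreover \cref{thm-comparison} applied with $W = \trivrep$ gives a $G$-equivariant isomorphism $A' \otimes_A \ind_K^G \trivrep = \ind_{\sovrA_M^+}^{\sovrA_M} \ind_K^G \trivrep \cong \Ind_{\ovrP}^G \ind_{K \cap M}^M \trivrep$.

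Now let $I^{\bullet}$ be a K-injective complex. By K-injectivity, $I^{\bullet, K} = \Hom_G(\ind_K^G \trivrep, I^{\bullet})$ represents $\bbR\Hom_G(\ind_K^G \trivrep, I^{\bullet})$ in $\cD(\sovrA_M^+)$, and likewise $\Hom_G(Y, I^{\bullet})$ represents $\bbR\Hom_G(Y, I^{\bullet})$ for every $Y$. On the one hand, using the termwise tensor--hom adjunction, the comparison isomorphism, and K-injectivity,
\[
    \Ind_{\sovrA_M^+}^{\sovrA_M}\bigl(I^{\bullet, K}\bigr) \; = \; \Hom_A\bigl(A', I^{\bullet, K}\bigr) \; = \; \Hom_G\bigl(A' \otimes_A \ind_K^G \trivrep, \, I^{\bullet}\bigr) \; \cong \; \bbR\Hom_G\bigl(\Ind_{\ovrP}^G \ind_{K \cap M}^M \trivrep, \, I^{\bullet}\bigr) ;
\]
on the other hand, since $A'$ is flat over $A$ one has $A' \otimes_A^{\bbL} \ind_K^G \trivrep \simeq A' \otimes_A \ind_K^G \trivrep$, so the derived tensor--hom adjunction --- which is unconditional, being a statement about the two functors $\Hom_A(A', \blank)$ and $\Hom_G(\ind_K^G \trivrep, \blank)$ and the bimodule $\ind_K^G \trivrep$, rather than about the derived functor of their composition --- gives
\[
    \bbR\Ind_{\sovrA_M^+}^{\sovrA_M}\bigl(I^{\bullet, K}\bigr) \; = \; \bbR\Hom_A\bigl(A', I^{\bullet, K}\bigr) \; \cong \; \bbR\Hom_G\bigl(A' \otimes_A \ind_K^G \trivrep, \, I^{\bullet}\bigr) \; \cong \; \bbR\Hom_G\bigl(\Ind_{\ovrP}^G \ind_{K \cap M}^M \trivrep, \, I^{\bullet}\bigr) .
\]
Tracing these natural isomorphisms shows that the canonical map $\Ind_{\sovrA_M^+}^{\sovrA_M}(I^{\bullet, K}) \to \bbR\Ind_{\sovrA_M^+}^{\sovrA_M}(I^{\bullet, K})$ corresponds to the identity and hence is an isomorphism; this is precisely the asserted $\Ind_{\sovrA_M^+}^{\sovrA_M}$-acyclicity of $I^{\bullet, K}$. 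The second assertion then follows formally: for $V^{\bullet} \in \cD(G)$ with K-injective resolution $V^{\bullet} \to I^{\bullet}$ we have $\bbR\R_{\ovrP, K}^G V^{\bullet} = \R_{\ovrP, K}^G(I^{\bullet}) = \Ind_{\sovrA_M^+}^{\sovrA_M}(I^{\bullet, K})$, while $I^{\bullet, K}$ also represents $\bbR(\blank)^K V^{\bullet}$, so $\bbR\Ind_{\sovrA_M^+}^{\sovrA_M}(\bbR(\blank)^K V^{\bullet}) = \bbR\Ind_{\sovrA_M^+}^{\sovrA_M}(I^{\bullet, K})$, which the first part identifies with $\Ind_{\sovrA_M^+}^{\sovrA_M}(I^{\bullet, K})$; compatibility with the canonical comparison transformation and naturality in $V^{\bullet}$ are routine.

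I expect the only genuine obstacle to be bookkeeping: keeping the left $R[G]$-action and the right $A$-action on $\ind_K^G \trivrep$ --- and the smoothness of all representations in play --- straight throughout, so that the displayed isomorphisms are genuinely natural and assemble into an equivalence of functors rather than a mere objectwise isomorphism. All the real content is \cref{thm-comparison}, which is what permits trading the localized small induction $A' \otimes_A \ind_K^G \trivrep$ for the parabolic induction $\Ind_{\ovrP}^G \ind_{K \cap M}^M \trivrep$.
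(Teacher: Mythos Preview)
Your argument is correct and at its core identical to the paper's: both reduce to the derived tensor--hom adjunction together with the flatness of $A' = R[\sovrA_M]$ over $A = R[\sovrA_M^+]$, so that for a K-injective $I^{\bullet}$ one has
\[
    \bbR\Hom_A(A', I^{\bullet, K})
    \;\simeq\; \Hom_G^{\bullet}\bigl(\ind_K^G \trivrep \otimes_A P^{\bullet}, I^{\bullet}\bigr)
    \;\simeq\; \Hom_G^{\bullet}\bigl(\ind_K^G \trivrep \otimes_A A', I^{\bullet}\bigr)
    \;\simeq\; \Ind_{\sovrA_M^+}^{\sovrA_M} I^{\bullet, K}
\]
for any projective resolution $P^{\bullet} \to A'$ over $A$. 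The only difference is cosmetic but worth noting: your appeal to \cref{thm-comparison} to rewrite $A' \otimes_A \ind_K^G \trivrep$ as $\Ind_{\ovrP}^G \ind_{K \cap M}^M \trivrep$ is unnecessary here---the paper never leaves the object $\ind_{\sovrA_M^+}^{\sovrA_M} \ind_K^G \trivrep$, and your own chain of isomorphisms shows that the identity of this object is irrelevant once flatness is in hand. So your closing claim that ``all the real content is \cref{thm-comparison}'' mislocates the input: for this lemma the only content is that $\sovrA_M = \sovrA_M^+[\bar b_0^{-1}]$ is a localization. The comparison theorem genuinely enters only later, in \cref{lem-derived-comp-invariants-ind-lf}, where one needs the left adjoint of $\R_{\ovrP, K}^G$ to be exact.
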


\begin{proof}
    Choose a projective resolution $P^{\bullet}$ of $R[\sovrA_M]$ as an $R[\sovrA_M^+]$-module.
    Then we have
    \begin{align*}
        \bbR \Ind_{\sovrA_M^+}^{\sovrA_M} I^{\bullet, K}
        & \, \simeq \, \Hom_{\sovrA_M^+}^{\bullet} \roundbr[\big]{P^{\bullet}, \Hom_G^{\bullet}(\ind_K^G \trivrep, I^{\bullet})} \\
        & \, \simeq \, \Hom_G^{\bullet} \roundbr[\big]{\ind_K^G \trivrep \otimes_{R[\sovrA_M^+]} P^{\bullet}, I^{\bullet}} \\
        & \, \simeq \, \Hom_G^{\bullet} \roundbr[\big]{\ind_{\sovrA_M^+}^{\sovrA_M} \ind_K^G \trivrep, I^{\bullet}} \, \simeq \, \Ind_{\sovrA_M^+}^{\sovrA_M} I^{\bullet, K} \, ,
    \end{align*}
    where we use that $R[\sovrA_M]$ is flat over $R[\sovrA_M^+]$ in the third step.
\end{proof}

\begin{lemma} \label{lem:locally-finite-cohomology}
    Let $V^{\bullet}$ be a complex of smooth $G$-representations.

    If $V^{\bullet}$ has admissible cohomology (i.e.\ if the $\HH^n(V^{\bullet})$ are admissible smooth $G$-representations) then $\HH^n(K, V^{\bullet})$ is finite and $\HH^n(N_0, V^{\bullet})$ is $A_M^+$-locally finite.
    If $V^{\bullet}$ has locally admissible cohomology then $\HH^n(K, V^{\bullet})$ and $\HH^n(N_0, V^{\bullet})$ are $A_M^+$-locally finite.
\end{lemma}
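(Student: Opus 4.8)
The plan is to reduce, by a chain of formal manoeuvres, to a finiteness statement for the cohomology of a \emph{single} admissible smooth representation. First I would observe that $K$ and $N_0$ are torsionfree compact $p$-adic analytic groups ($K$ open in $G$, $N_0$ open in $N$), and that every smooth $R$-representation is $p$-power torsion since the maximal ideal of $R$ is nilpotent and contains $p$. By Lazard's theory the functors $\bbR(\blank)^{K}$ and $\bbR(\blank)^{N_0}$ on $\Mod_R^{\sm}(G)$ therefore have cohomological dimension $\leq d \coloneqq \dim_{\bbQ_p} G$. I would also note that $(\blank)^{N_0}$ is naturally a left exact functor $\Mod_R^{\sm}(G) \to \Mod_R^{\sm}(M^+)$ via the Hecke action of \cref{lem-hecke-actions} (and $(\blank)^{K}$ similarly lands in smooth $A_M^+$-representations), so that the hypercohomology spectral sequence
\[
    E_2^{p,q} = \HH^p\bigl(N_0, \HH^q(V^{\bullet})\bigr) \Longrightarrow \HH^{p+q}(N_0, V^{\bullet})
\]
— and its analogue for $K$ — is a spectral sequence of smooth $A_M^+$-representations whose $E_2$-page carries the natural Hecke action. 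By the dimension bound only the columns $0 \leq p \leq d$ are nonzero, so both spectral sequences converge. Since the finite, resp.\ the $A_M^+$-locally finite, smooth representations form a Serre subcategory, it suffices to prove: for a single admissible smooth $G$-representation $W$ the group $\HH^n(K,W)$ is finite and $\HH^n(N_0,W)$ is $A_M^+$-locally finite, and for a single locally admissible $W$ both are $A_M^+$-locally finite.

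Next I would reduce the locally admissible case to the admissible one. Writing a locally admissible $W$ as the filtered union $\colim_{\alpha} W_{\alpha}$ of its finitely generated (hence admissible) sub-$G$-representations and using that group cohomology commutes with filtered colimits of coefficients, we get $\HH^n(N_0, W) = \colim_{\alpha} \HH^n(N_0, W_{\alpha})$, and likewise for $K$, all transition maps being $A_M^+$-equivariant by naturality of the Hecke operators in the coefficient module. Thus any class in $\HH^n(N_0, W)$ is the image of some $\xi_{\alpha} \in \HH^n(N_0, W_{\alpha})$, and $R[A_M^+] \cdot \xi_{\alpha}$ surjects onto $R[A_M^+]$ times that class; so it suffices to treat admissible $W$ (the argument for $K$ is identical, the relevant modules even being $R$-finite).

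For admissible $W$ and the group $K$, which is open in $G$, I would proceed as follows: dévissage along the filtration of $W$ by powers of the maximal ideal of $R$ reduces to $R = k$ a field of characteristic $p$; then $W^{K}$ is finite-dimensional, so the Pontryagin dual $W^{\vee}$ is finitely generated over the Noetherian completed group algebra $k[[K]]$ (topological Nakayama, as $(W^{\vee})_{K} \cong (W^{K})^{\vee}$), and dualizing a resolution of $W^{\vee}$ by finitely generated free $k[[K]]$-modules produces a resolution of $W$ by finite direct sums of copies of $C^{\infty}(K,k) = \Ind_{\{1\}}^{K}\trivrep$, which is $(\blank)^{K}$-acyclic with $\HH^0 = \trivrep$ (Shapiro). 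Hence $\HH^n(K,W)$ is computed by a complex of finite-dimensional $k$-spaces and is finite.

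For the group $N_0$ this argument breaks down — $N_0$ is not open in $G$ and $W^{N_0}$ is typically infinite, which is exactly why the lemma only claims local finiteness — and this is where I expect the real work. Concretely one must show that $\HH^n(N_0, W)$ is $A_M^+$-locally finite for admissible $W$: for $n = 0$ this is the known fact (used in \cref{lem:comparing-ind-big-small} and going back to \parencite{em2}) that $W^{N_0}$ is $A_M^+$-locally finite; for general $n$ I would either invoke the analogous analysis of the functors $\HH^n(N_0, \blank)$ carried out in \parencite{em2}, or argue directly after reduction to $R = k$ by identifying $\HH^n(N_0, W)$, via Lazard–Poincaré duality for the $p$-adic Poincaré duality group $N_0$, with a group-homology group $\HH_{d_N - n}(N_0, \blank)$ built from $W^{\vee}$ up to a twist by the orientation character (with $d_N = \dim_{\bbQ_p} N$), and then showing that this homology, equipped with the $A_M^+$-action inherited from the Hecke action on $W^{\vee}$, is $A_M^+$-locally finite, using that $W^{\vee}$ — although not finitely generated over $k[[N_0]]$ — acquires the required finiteness once the contracting element $b_0$ is inverted in the appropriate sense. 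This last point about $\HH^n(N_0, W)$ as an $A_M^+$-module is the crux; everything else is spectral sequences, filtered colimits, dévissage, and the standard finiteness of cohomology of compact $p$-adic analytic groups with $p$-torsion admissible coefficients.
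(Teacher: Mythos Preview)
Your overall strategy---finite cohomological dimension of $K$ and $N_0$ to reduce to a single representation, filtered colimits to pass from locally admissible to admissible, and the completed-group-algebra argument (i.e.\ \parencite[Lemma~3.4.4]{em2}) for the finiteness of $\HH^n(K,W)$---matches the paper's proof exactly.

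The one place where you diverge is the treatment of $\HH^n(N_0, W)$ for admissible $W$, which you flag as the ``crux'' and propose to handle either by invoking the analysis in \parencite{em2} or via a Lazard--Poincar\'e duality argument on $W^{\vee}$. This is unnecessarily heavy: the paper observes that the very system $(K_i)_{i \geq 0}$ fixed in the setup satisfies $\bigcap_i K_i = N_0$, so
\[
    \HH^n(N_0, W) \;\cong\; \colim_i \HH^n(K_i, W)
\]
as smooth $A_M^+$-representations. Each $\HH^n(K_i, W)$ is finite by the $K$-case already proved (applied to $K = K_i$), and a filtered colimit of finite $A_M^+$-representations is $A_M^+$-locally finite. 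Thus the $N_0$-case is an immediate corollary of the $K$-case, not an independent problem. Your duality sketch (``$W^{\vee}$ acquires the required finiteness once $b_0$ is inverted'') could presumably be made to work, but it is both vaguer and harder than the one-line reduction the paper uses; the advantage of the paper's route is that it leverages the carefully chosen family $(K_i)$ and makes the $N_0$-statement fall out for free.
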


\begin{proof}
    As $K$ and $N_0$ have finite $R$-cohomological dimension, see \parencite[Corollaire 1]{ser}, we are reduced to treating the case when $V^{\bullet} = V$ is concentrated in degree $0$.

    For the first part suppose that $V$ is admissible.
    Then $\HH^n(K, V)$ is finite by \parencite[Lemma 3.4.4]{em2} and consequently $\HH^n(N_0, V) = \colim_i \HH^n(K_i, V)$ is $A_M^+$-locally finite.
    Note that we can still apply \parencite[Lemma 3.4.4]{em2} even though the residue field of $R$ may not be finite because using using \parencite[Proposition 2.3]{bru} one sees that \parencite[Proposition 2.1.9]{em2} also holds in the more general setting.

    The second part then follows because profinite group cohomology commutes with filtered colimits; this can be seen by noting that it is computed by a complex of continuous cochains, see \parencite[Section 2.2]{em2}.
\end{proof}

\begin{lemma} \label{lem-finite-acyclic}
    The finite $\sovrA_M^+$-representations are acyclic for $\Ind_{\sovrA_M^+}^{\sovrA_M} \colon \Mod_R(\sovrA_M^+) \to \Mod_R(\sovrA_M)$.
\end{lemma}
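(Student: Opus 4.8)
The plan is to reduce the statement to an elementary computation with the localization $R[\sovrA_M] = R[\sovrA_M^+][b_0^{-1}]$. First I would note that $\sovrA_M = A_M/(K \cap A_M)$ is discrete, so the smoothness condition in the definition of $\Ind_{\sovrA_M^+}^{\sovrA_M}$ is vacuous and we have $\Ind_{\sovrA_M^+}^{\sovrA_M} V = \Hom_{R[\sovrA_M^+]}(R[\sovrA_M], V)$; in particular $\bbR^i \Ind_{\sovrA_M^+}^{\sovrA_M} V$ is the $i$-th right derived functor of $\Hom_{R[\sovrA_M^+]}(R[\sovrA_M], -)$ evaluated at $V$. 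By our standing assumption $A_M = A_M^+[b_0^{-1}]$ the image of $b_0$ in $\sovrA_M^+$ still satisfies $\sovrA_M = \sovrA_M^+[b_0^{-1}]$, so $R[\sovrA_M]$ is the localization of $R[\sovrA_M^+]$ at the powers of $b_0$, hence the colimit of $R[\sovrA_M^+] \xrightarrow{b_0} R[\sovrA_M^+] \xrightarrow{b_0} \cdots$.

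Then I would use the associated telescope short exact sequence
\[
    0 \longrightarrow \bigoplus_{n \geq 0} R[\sovrA_M^+] \longrightarrow \bigoplus_{n \geq 0} R[\sovrA_M^+] \longrightarrow R[\sovrA_M] \longrightarrow 0 \, ,
\]
where the first map is the identity minus the endomorphism sending the $n$-th summand into the $(n+1)$-st by multiplication by $b_0$. This is a length-one resolution of $R[\sovrA_M]$ by free $R[\sovrA_M^+]$-modules, so applying $\Hom_{R[\sovrA_M^+]}(-, V)$ gives $\bbR^i \Ind_{\sovrA_M^+}^{\sovrA_M} V = 0$ for all $i \geq 2$ and every $V$, while for $i = 1$ --- using $\Hom_{R[\sovrA_M^+]}(\bigoplus_n R[\sovrA_M^+], V) = \prod_n V$ and that the induced transition maps are multiplication by $b_0$ --- it identifies $\bbR^1 \Ind_{\sovrA_M^+}^{\sovrA_M} V$ with the ${\lim}^1$-term of the tower $\cdots \xrightarrow{b_0} V \xrightarrow{b_0} V$.

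Finally, if $V$ is finite then it has finite length as an $R$-module (since $R$ is Artinian), so the descending chain $V \supseteq b_0 V \supseteq b_0^2 V \supseteq \cdots$ of $R$-submodules stabilizes; hence the tower above satisfies the Mittag--Leffler condition and its ${\lim}^1$ vanishes. Together with the vanishing in degrees $\geq 2$ this shows that $V$ is acyclic for $\Ind_{\sovrA_M^+}^{\sovrA_M}$. The argument is essentially formal; the only slightly delicate point is the bookkeeping in the telescope sequence and the identification of the first derived functor with a ${\lim}^1$-term, after which finiteness of $V$ makes the Mittag--Leffler condition automatic. (One can alternatively deduce the statement from \parencite[Lemma 3.1.5]{em1}.)
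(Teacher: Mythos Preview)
Your proof is correct and follows essentially the same route as the paper: both identify $\bbR^q \Ind_{\sovrA_M^+}^{\sovrA_M} V$ with $\bbR^q \lim_{b_0} V$ (you spell out the telescope resolution explicitly, the paper simply asserts the identification) and then invoke the Mittag--Leffler condition, which holds because $V$ is Artinian over $R$. The only difference is that you supply more detail on the $\lim^1$-identification and the vanishing in degrees $\geq 2$.
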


\begin{proof}
    Let $V$ be a finite $\sovrA_M^+$-representation.
    Then $V$ is an Artinian $R$-module and thus multiplication by $b_0$ on $V$ satisfies the Mittag-Leffler condition so that we obtain $\bbR^q \Ind_{\sovrA_M^+}^{\sovrA_M} V \, \cong \, \bbR^q \lim_{b_0} V \, = \, 0$.
\end{proof}

\begin{theorem} \label{thm-right-derived-rpg}
    Let $V^{\bullet}$ be a complex of smooth $G$-representations with admissible cohomology.
    Then we have natural isomorphisms of smooth $M$-representations
    \[
        \bbR_{\ovrP}^{G, n} V^{\bullet}
        \, \cong \,
        \colim_i \Ind_{A_M^+}^{A_M} \HH^n(K_i, V^{\bullet})
        \, \cong \,
        \roundbr[\big]{\Ind_{M^+}^M \HH^n(N_0, V^{\bullet})}^{\lf{A_M}}
        \, \cong \,
        \ind_{M^+}^M \HH^n(N_0, V^{\bullet}) \, .
    \]
\end{theorem}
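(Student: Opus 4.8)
The plan is to obtain all three isomorphisms by taking a filtered colimit over $i$ of the corresponding statements for the truncated functors $\R_{\ovrP, K_i}^G$, for which the necessary inputs have already been assembled. First, by \cref{cor-describing-right-adjoint} there is a natural isomorphism $\R_{\ovrP}^G \cong \colim_i \R_{\ovrP, K_i}^G$ of functors $\Mod_R^{\sm}(G) \to \Mod_R^{\sm}(A_M)$. Since filtered colimits are exact in the Grothendieck category $\Mod_R^{\sm}(A_M)$ and a K-injective complex in $\Mod_R^{\sm}(G)$ computes each of $\bbR_{\ovrP}^G$ and $\bbR_{\ovrP, K_i}^G$, a K-injective resolution $V^{\bullet} \to I^{\bullet}$ gives
\[
    \bbR_{\ovrP}^G V^{\bullet} \;\simeq\; \R_{\ovrP}^G I^{\bullet} \;\simeq\; \colim_i \R_{\ovrP, K_i}^G I^{\bullet} \;\simeq\; \colim_i \bbR_{\ovrP, K_i}^G V^{\bullet}
\]
in $\cD(A_M)$, hence $\bbR_{\ovrP}^{G, n} V^{\bullet} \cong \colim_i \bbR_{\ovrP, K_i}^{G, n} V^{\bullet}$ because $\HH^n$ commutes with filtered colimits.

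\emph{The first isomorphism.} By \cref{lem-derived-comp-invariants-ind} we have $\bbR_{\ovrP, K_i}^G V^{\bullet} \simeq \bbR\Ind_{\sovrA_M^+}^{\sovrA_M}\bigl(\bbR(\blank)^{K_i} V^{\bullet}\bigr)$, and the cohomology objects of $\bbR(\blank)^{K_i} V^{\bullet}$ are the $\HH^q(K_i, V^{\bullet})$, which are finite by \cref{lem:locally-finite-cohomology} — this is where the admissibility of the cohomology of $V^{\bullet}$ enters. Finite $\sovrA_M^+$-representations are $\Ind_{\sovrA_M^+}^{\sovrA_M}$-acyclic by \cref{lem-finite-acyclic}, and $\Ind_{\sovrA_M^+}^{\sovrA_M}$ has cohomological dimension $\leq 1$ since it is computed by $\lim_{b_0}$. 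Therefore, after replacing $V^{\bullet}$ by a bounded truncation (harmless, as $\R_{\ovrP, K_i}^G$ then has finite cohomological dimension), the hypercohomology spectral sequence $E_2^{p,q} = \bbR^p\Ind_{\sovrA_M^+}^{\sovrA_M}\HH^q(K_i, V^{\bullet}) \Rightarrow \bbR_{\ovrP, K_i}^{G, p+q} V^{\bullet}$ degenerates and yields $\bbR_{\ovrP, K_i}^{G, n} V^{\bullet} \cong \Ind_{\sovrA_M^+}^{\sovrA_M}\HH^n(K_i, V^{\bullet}) \cong \Ind_{A_M^+}^{A_M}\HH^n(K_i, V^{\bullet})$, the last step because $K_i \cap A_M$ acts trivially. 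Together with the previous reduction this gives the first isomorphism; its $M$-equivariance (the statements so far being only $A_M$-equivariant) is obtained by rerunning the argument of \cref{lem-comparison-m-equivariant} with the $M^+$-Hecke actions on the corepresenting pro-objects $\formallim_i \ind_{K_i}^G \trivrep$ and $\formallim_i \ind_{M_i}^M \trivrep$.

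\emph{The second and third isomorphisms.} Since $\bigcap_i K_i = N_0$ and profinite-group cohomology is continuous, $\HH^n(N_0, V^{\bullet}) = \colim_i \HH^n(K_i, V^{\bullet})$. As in the proof of \cref{cor-describing-ord}, both $(\blank)^{\lf{A_M}}$ and $\roundbr[\big]{\Ind_{A_M^+}^{A_M}(\blank)}^{\lf{A_M}}$ commute with filtered colimits; combined with the fact that each $\Ind_{A_M^+}^{A_M}\HH^n(K_i, V^{\bullet})$ is finite (by \cref{lem:ind-finite}) and hence already $A_M$-locally finite, this identifies $\colim_i \Ind_{A_M^+}^{A_M}\HH^n(K_i, V^{\bullet})$ with $\roundbr[\big]{\Ind_{A_M^+}^{A_M}\HH^n(N_0, V^{\bullet})}^{\lf{A_M}}$, and the same $M^+$-Hecke bookkeeping as above replaces $\Ind_{A_M^+}^{A_M}$ by $\Ind_{M^+}^M$; this is the second isomorphism. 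Finally, $\HH^n(N_0, V^{\bullet})$ is $A_M^+$-locally finite by \cref{lem:locally-finite-cohomology}, so \cref{lem:comparing-ind-big-small}, whose proof applies verbatim to an arbitrary $A_M^+$-locally finite smooth $M^+$-representation in place of $V^{N_0}$, identifies $\roundbr[\big]{\Ind_{M^+}^M\HH^n(N_0, V^{\bullet})}^{\lf{A_M}}$ with $\ind_{M^+}^M\HH^n(N_0, V^{\bullet})$, completing the chain.

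\emph{Expected difficulty.} The two substantive ingredients — the flatness of $\ind_{A_M^+}^{A_M}\ind_{K_i}^G\trivrep$ over $R[\sovrA_M]$ behind \cref{lem-derived-comp-invariants-ind} (which rests on \cref{thm-comparison}) and the finiteness in \cref{lem:locally-finite-cohomology} — are already in hand, so what remains is essentially bookkeeping. The points demanding genuine care are upgrading every isomorphism from $A_M$- to $M$-equivariance, and ensuring the spectral-sequence step is legitimate for a possibly unbounded complex $V^{\bullet}$, which is exactly why the cohomological-dimension bound on $\Ind_{\sovrA_M^+}^{\sovrA_M}$ (and hence on $\R_{\ovrP, K_i}^G$) is invoked.
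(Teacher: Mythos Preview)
Your proof is correct and follows essentially the same route as the paper: the spectral sequence from \cref{lem-derived-comp-invariants-ind}, degeneration via \cref{lem:locally-finite-cohomology} and \cref{lem-finite-acyclic}, passage to the colimit over $i$, and then \cref{lem:ind-finite} together with \cref{lem:comparing-ind-big-small} for the remaining identifications. Your version is in fact more explicit than the paper's on two points the paper leaves implicit: the justification that $\bbR_{\ovrP}^{G,n} \cong \colim_i \bbR_{\ovrP,K_i}^{G,n}$ via a K-injective resolution, and the convergence of the spectral sequence for unbounded $V^{\bullet}$ (handled by the finite cohomological dimension of $\R_{\ovrP,K_i}^G$); your attention to upgrading from $A_M$- to $M$-equivariance is likewise a point the paper does not spell out.
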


\begin{proof}
    By \cref{lem-derived-comp-invariants-ind} we have a spectral sequence
    \[
        E_2^{p, q} \, = \, \bbR^p \Ind_{\sovrA_M^+}^{\sovrA_M} \HH^q(K, V^{\bullet}) \, \Longrightarrow \, \bbR_{\ovrP, K}^{G, p + q} V^{\bullet} \, .
    \]
    Now the $\HH^q(K, V^{\bullet})$ are finite by \cref{lem:locally-finite-cohomology} and consequently $E_2^{p, q} = 0$ for $p > 0$ by \cref{lem-finite-acyclic}.
    The first of the isomorphisms above is now obtained from setting $K = K_i$ for varying $i$ and taking the colimit.

    Now we use \cref{lem:ind-finite}, the fact that
    \[
        \roundbr[\big]{\Ind_{A_M^+}^{A_M} (\blank)}^{\lf{A_M}} \colon \Mod_R^{\sm}(A_M^+) \longrightarrow \Mod_R^{\sm, \lf{}}(A_M)
    \]
    commutes with filtered colimits, see \parencite[Lemma 3.2.2]{em1}, and \cref{lem:comparing-ind-big-small} to obtain the second and third isomorphisms.
\end{proof}

\begin{cor}\label{cor-right-derived-rpg}
    We have a natural transformation
    \[
        \psi \colon \bbR_{\ovrP}^G \longrightarrow \ind_{M^+}^M \bbR(\blank)^{N_0}
    \]
    of functors $\cD(G) \to \cD(M)$ that is an equivalence on complexes with admissible cohomology.
\end{cor}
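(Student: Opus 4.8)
The plan is to realise $\psi$ as the total right derived functor of an explicit natural transformation of underived functors, and then to see that it is an equivalence on admissible-cohomology complexes by a dévissage that reduces everything to the single-representation case already settled in \cref{thm-right-derived-rpg}.

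The technical heart is a comparison map between large and small induction along $A_M^+ \subseteq A_M$. Since $R[A_M] = R[A_M^+][b_0^{-1}]$, for a smooth $A_M^+$-representation $W$ one has $\Ind_{A_M^+}^{A_M} W \cong \lim(\dotsb \xrightarrow{b_0} W \xrightarrow{b_0} W)$ and $\ind_{A_M^+}^{A_M} W \cong \colim(W \xrightarrow{b_0} W \xrightarrow{b_0} \dotsb) = W[b_0^{-1}]$, and projecting a compatible system onto its $0$-th entry and passing to the colimit defines a natural, $A_M$-equivariant map $\nu_W \colon \Ind_{A_M^+}^{A_M} W \to \ind_{A_M^+}^{A_M} W$. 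The key property is that $\nu_W$ is an \emph{isomorphism} whenever $W$ is finite: as $R$ is Artinian, $W$ has finite length, so Fitting's lemma gives $W = W_{\mathrm{ss}} \oplus W_{\mathrm{nil}}$ with $b_0$ acting bijectively on $W_{\mathrm{ss}}$ and nilpotently on $W_{\mathrm{nil}}$, and both the source and the target of $\nu_W$ are then canonically identified with $W_{\mathrm{ss}}$. (This is the one place where the Artinian hypothesis on $R$ enters.)

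Next I would construct $\psi$. By \cref{cor-describing-right-adjoint} we have $\R_{\ovrP}^G V \cong \colim_i \Ind_{A_M^+}^{A_M} V^{K_i}$, naturally in $V$ and compatibly with the $M$-action. Composing with $\colim_i \nu_{V^{K_i}}$, and using that $\ind_{A_M^+}^{A_M}$ is exact and commutes with filtered colimits together with the canonical identification $\ind_{A_M^+}^{A_M} \Res^{M^+}_{A_M^+} U \cong \Res^M_{A_M} \ind_{M^+}^M U$ (both equal $U[b_0^{-1}]$) for a smooth $M^+$-representation $U$, yields a natural transformation $\theta \colon \R_{\ovrP}^G \to \ind_{M^+}^M (\blank)^{N_0}$ of left-exact functors $\Mod_R^{\sm}(G) \to \Mod_R^{\sm}(M)$, where $(\blank)^{N_0}$ carries its Hecke $M^+$-action. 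That $\theta$ is $M$-equivariant, and not merely $A_M$-equivariant, follows from naturality of $\nu$ together with the facts that the Hecke operators of $A_M^+$ and of $M^+$ on $V^{N_0}$ commute and that the $M$-action on $\R_{\ovrP}^G V$ extends the Hecke action of $M^+$ — which is where \cref{thm-comparison} re-enters, via \cref{cor-describing-right-adjoint}. Finally, since $\ind_{M^+}^M$ is exact (base change along the flat ring map $R[M^+] \to R[M] = R[M^+][b_0^{-1}]$), the functor $\ind_{M^+}^M \bbR(\blank)^{N_0}$ is the total right derived functor of $\ind_{M^+}^M (\blank)^{N_0}$, and I set $\psi \coloneqq \bbR\theta$.

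It remains to show $\psi_{V^\bullet}$ is an equivalence when $V^\bullet$ has admissible cohomology. Since $N_0$ has finite $R$-cohomological dimension, both $\bbR_{\ovrP}^G$ and $\ind_{M^+}^M \bbR(\blank)^{N_0}$ have bounded cohomological amplitude on such complexes (by \cref{thm-right-derived-rpg}, respectively by exactness of $\ind_{M^+}^M$), so $\HH^m(\psi_{V^\bullet})$ depends only on a bounded truncation of $V^\bullet$; induction on amplitude via the triangles $\tau_{\leq n-1} V^\bullet \to \tau_{\leq n} V^\bullet \to \HH^n(V^\bullet)[-n]$ and the five lemma then reduces us to $V^\bullet = V$ a single admissible $G$-representation. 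For such $V$ one uses $\bbR_{\ovrP}^G V \simeq \colim_i \bbR_{\ovrP, K_i}^G V$, \cref{lem-derived-comp-invariants-ind}, and the spectral sequence $\bbR^p \Ind_{\sovrA_M^+}^{\sovrA_M} \HH^q(K_i, V) \Longrightarrow \bbR_{\ovrP, K_i}^{G, p+q} V$, which degenerates because the $\HH^q(K_i, V)$ are finite (\cref{lem:locally-finite-cohomology}, \cref{lem-finite-acyclic}); tracing through these identifications shows that $\HH^n(\psi_V)$ is $\colim_i \nu_{\HH^n(K_i, V)}$, a filtered colimit of isomorphisms by the key property of $\nu$, hence itself an isomorphism. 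I expect the main obstacle to be precisely this last compatibility — verifying that, after the spectral sequence degenerates, the cohomology map induced by $\bbR\theta$ really is $\colim_i \nu_{\HH^n(K_i, V)}$ — together with the bookkeeping behind the $M$-equivariance of $\theta$; the remaining steps are formal manipulations with (co)limits and with the comparison map $\nu$.
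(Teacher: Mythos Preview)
Your proposal is correct and follows essentially the same strategy as the paper: construct $\psi$ by deriving an underived comparison map, then verify it is an isomorphism on cohomology by combining the degenerate spectral sequence for $\bbR_{\ovrP,K_i}^G$ with the fact that large and small induction along $A_M^+ \subseteq A_M$ agree on finite modules. Two simplifications are available relative to your outline: the paper factors $\theta$ through $\Ind_{M^+}^M(\blank)^{N_0}$ rather than through $\colim_i \ind_{A_M^+}^{A_M}(\blank)^{K_i}$, which makes the $M$-equivariance immediate and dissolves your bookkeeping concern, and the d\'evissage to a single admissible representation is unnecessary because \cref{thm-right-derived-rpg} is already stated for complexes with admissible cohomology and supplies the needed cohomology isomorphisms directly.
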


\begin{proof}
    Using the description from \cref{cor-describing-right-adjoint} we obtain natural transformations
    \[
        \R_{\ovrP}^G \longrightarrow \Ind_{M^+}^M (\blank)^{N_0} \longrightarrow \ind_{M^+}^M (\blank)^{N_0}
    \]
    of functors $\Mod_R^{\sm}(G) \to \Mod_R^{\sm}(M)$.
    This yields the natural transformations
    \[
        \bbR_{\ovrP}^G \longrightarrow \bbR \roundbr[\big]{\Ind_{M^+}^M (\blank)^{N_0}} \longrightarrow \bbR \roundbr[\big]{\ind_{M^+}^M (\blank)^{N_0}} \, \simeq \, \ind_{M^+}^M \bbR (\blank)^{N_0}
    \]
    of functors $\cD(G) \to \cD(M)$ whose composite we denote by $\psi$.

    We now have a commutative diagram
    \[
    \begin{tikzcd}
        \bbR_{\ovrP}^{G, n} \ar[r] \ar[d]
        & \bbR^n \roundbr[\big]{\Ind_{M^+}^M (\blank)^{N_0}} \ar[r] \ar[d]
        & \bbR^n \roundbr[\big]{\ind_{M^+}^M (\blank)^{N_0}} \ar[d, "\cong"]
        \\
        \colim_i \Ind_{A_M^+}^{A_M} \HH^n(K_i, \blank) \ar[r]
        & \Ind_{M^+}^M \HH^n(N_0, \blank) \ar[r]
        & \ind_{M^+}^M \HH^n(N_0, \blank)
    \end{tikzcd}
    \]
    of functors $\cD(G) \to \Mod_R^{\sm}(M)$ and we have to show that the composition of the upper two horizontal natural transformation is an isomorphism on complexes with admissible cohomology.
    But on such complexes the left vertical natural transformation as well as the composition of the lower two horizontal natural transformations are isomorphisms by \cref{thm-right-derived-rpg}.
\end{proof}

\medskip

\subsection{The Right Derived Functors of $\Ord_P^G$}

%Recall from \cref{subsec-alg-grps} that we chose $A_M^+$ in a way such that for every, or equivalently one, compact open subgroup $U \subseteq A_M^+$ the quotient $A_M^+/U$ is a finitely generated commutative monoid.

We again temporarily fix a compact open subgroup $K \subseteq G$ as in \cref{subsec-right-derived-r} and consider the functor
\[
    \Ord_{P, K}^G \colon \Mod_R^{\sm}(G) \longrightarrow \Mod_R^{\lf{}}(\sovrA_M) \, , \qquad V \mapsto \roundbr[\big]{\R_{\ovrP, K}^G V}^{\lf{\sovrA_M}} = \roundbr[\big]{\Ind_{\sovrA_M^+}^{\sovrA_M} V^K}^{\lf{\sovrA_M}} \, .
\]
For a smooth $G$-representation $V$ we have an $A_M$-equivariant isomorphism $\Ord_P^G V \cong \colim_i \Ord_{P, K_i}^G V$, similarly as for $\R_{\ovrP}^G$.

\begin{lemma} \label{lem-derived-comp-invariants-ind-lf}
    The functors $\Ind_{\sovrA_M^+}^{\sovrA_M} \colon \Mod_R(\sovrA_M^+) \to \Mod_R(\sovrA_M)$ and $\R_{\ovrP, K}^G \colon \Mod_R^{\sm}(G) \to \Mod_R(\sovrA_M)$ both preserve K-injective complexes.

    Consequently, we have natural equivalences
    \begin{align*}
        \bbR \Ord_{P, K}^G
        & \, \simeq \, \bbR (\blank)^{\lf{\sovrA_M}} \circ \bbR \R_{\ovrP, K}^G \\
        & \, \simeq \, \bbR (\blank)^{\lf{\sovrA_M}} \circ \bbR {\Ind_{\sovrA_M^+}^{\sovrA_M}} \circ \bbR (\blank)^K \\
        & \, \simeq \, \bbR \roundbr[\big]{\Ind_{\sovrA_M^+}^{\sovrA_M} (\blank)}^{\lf{\sovrA_M}} \circ \bbR (\blank)^K
    \end{align*}
    of functors $\cD(G) \to \cD(\sovrA_M)$.
\end{lemma}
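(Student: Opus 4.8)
The plan is to deduce the K‑injectivity statements from the standard fact that a right adjoint whose left adjoint is exact preserves K‑injective complexes; granting those, the three displayed equivalences are a formal manipulation of composites of derived functors. For $I^{\bullet}$ K‑injective and $A^{\bullet}$ acyclic, the fact is just $\Hom_{\cK}(A^{\bullet}, R I^{\bullet}) \cong \Hom_{\cK}(L A^{\bullet}, I^{\bullet}) = 0$, $L A^{\bullet}$ being acyclic. Applying this to $\Ind_{\sovrA_M^+}^{\sovrA_M}$ is immediate: it is by definition right adjoint to the restriction functor $\Mod_R(\sovrA_M) \to \Mod_R(\sovrA_M^+)$, which is exact.

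The main work is the claim for $\R_{\ovrP, K}^G$. First I would identify $\R_{\ovrP, K}^G$ with $\Hom_G(P, \blank)$, where $P \coloneqq \ind_{A_M^+}^{A_M} \ind_K^G \trivrep$ is viewed as an $(R[G], R[\sovrA_M])$‑bimodule via its $G$‑action and the right $A_M$‑action obtained from the Hecke action of \cref{lem-hecke-actions} (which is trivial on $K \cap A_M$, so this descends to $\sovrA_M$). Indeed, combining Tensor--Hom with the adjunctions $\ind_K^G \dashv \Res_K^G$ and $\ind_{\sovrA_M^+}^{\sovrA_M} \dashv \Res$ gives a natural isomorphism
\[
	\Hom_G(P, V) \,\cong\, \Hom_{\sovrA_M^+}\!\roundbr[\big]{R[\sovrA_M],\, V^K} \,=\, \Ind_{\sovrA_M^+}^{\sovrA_M} V^K \,=\, \R_{\ovrP, K}^G V
\]
of $R[\sovrA_M]$‑modules. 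Since $\Hom_G(P, \blank)$ is right adjoint to $W \mapsto P \otimes_{R[\sovrA_M]} W$ (which lands in $\Mod_R^{\sm}(G)$ as $P$ is smooth), it will preserve K‑injective complexes once we know that $P$ is \emph{flat} over $R[\sovrA_M]$.

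This flatness is the crux of the proof, and it is precisely here that \cref{thm-comparison} is indispensable. That theorem identifies $P$, $A_M$‑equivariantly, with $\Ind_{\ovrP}^G \ind_{K \cap M}^M \trivrep$. The $M$‑representation $\ind_{K \cap M}^M \trivrep = R[M/(K \cap M)]$ is \emph{free} as an $R[\sovrA_M]$‑module, because the right $\sovrA_M$‑action is left translation and $A_M$ is central in $M$ (so $a\gamma(K \cap M) = \gamma(K \cap M)$ forces $a \in A_M \cap K$). It then remains to see that $\Ind_{\ovrP}^G$ sends a smooth $M$‑representation that is $R[\sovrA_M]$‑free — with the $R[\sovrA_M]$‑structure commuting with the $M$‑action — to an $R[\sovrA_M]$‑flat module. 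Here I would use the sheaf‑theoretic approach of \cref{sec-localized-compact-induction}: by \cref{prop-classify-equivariant-sheaves} and \cref{thm-comparison}, $P \cong \Gamma(X, \cH)$ for the $G$‑equivariant sheaf $\cH$ on $X = G/\ovrP$ with $\cH_{x_0} \cong \ind_{K\cap M}^M \trivrep$. One covers $X$ by the finitely many $G$‑translates of the big cell $N x_0 \cong N$ and refines this to a finite \emph{disjoint} clopen cover $X = \bigsqcup_a X_a$ with each $X_a$ contained in a translate $g_a N x_0$, whence $P = \bigoplus_a \Gamma(X_a, \cH)$ as $R[\sovrA_M]$‑modules. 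Over (a translate of) the big cell the sheaf $\cH$ is locally constant with value $\ind_{K\cap M}^M \trivrep$ — this follows from the $G$‑equivariant structure, translation by a sufficiently small subgroup of $N$ acting trivially on the pertinent stalks — so each $\Gamma(X_a, \cH)$ is a filtered union of finite direct sums of copies of $\ind_{K\cap M}^M \trivrep$, hence a filtered colimit of free $R[\sovrA_M]$‑modules, hence flat. Therefore $P$ is flat over $R[\sovrA_M]$ and $\R_{\ovrP, K}^G$ preserves K‑injective complexes. I expect this flatness — equivalently, that $\Ind_{\ovrP}^G$ carries $R[\sovrA_M]$‑free objects to $R[\sovrA_M]$‑flat ones — to be the main obstacle.

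It remains to record the three equivalences, which are now formal. In general, if $F$ is left exact and preserves K‑injective complexes and $G$ is left exact, then $\bbR(G \circ F) \simeq \bbR G \circ \bbR F$: evaluate both sides on a K‑injective resolution $V^{\bullet} \to I^{\bullet}$ and use that $F I^{\bullet}$ is again K‑injective, hence $G$‑acyclic. Applying this with $F = \R_{\ovrP, K}^G$, $G = (\blank)^{\lf{\sovrA_M}}$, together with $\Ord_{P,K}^G = (\blank)^{\lf{\sovrA_M}} \circ \R_{\ovrP,K}^G$, yields the first equivalence; substituting $\bbR \R_{\ovrP, K}^G \simeq \bbR \Ind_{\sovrA_M^+}^{\sovrA_M} \circ \bbR(\blank)^K$ from \cref{lem-derived-comp-invariants-ind} yields the second; and applying the same principle with $F = \Ind_{\sovrA_M^+}^{\sovrA_M}$, $G = (\blank)^{\lf{\sovrA_M}}$ rewrites $\bbR(\blank)^{\lf{\sovrA_M}} \circ \bbR\Ind_{\sovrA_M^+}^{\sovrA_M}$ as $\bbR\roundbr[\big]{\Ind_{\sovrA_M^+}^{\sovrA_M}(\blank)}^{\lf{\sovrA_M}}$, yielding the third.
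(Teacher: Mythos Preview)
Your proposal is correct and follows essentially the same strategy as the paper: identify both functors as right adjoints of exact left adjoints (so they preserve K-injective complexes), then deduce the three equivalences by the standard composition-of-derived-functors argument together with \cref{lem-derived-comp-invariants-ind}.

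The only place where you diverge is in justifying exactness of the left adjoint of $\R_{\ovrP,K}^G$, i.e.\ the flatness of $P = \Ind_{\ovrP}^G \ind_{K\cap M}^M \trivrep$ over $R[\sovrA_M]$. You argue this via a sheaf-theoretic decomposition of $X = G/\ovrP$ into clopen pieces contained in translates of the big cell. This works, but the paper dispatches the same point in one line: since $\ind_{K\cap M}^M \trivrep$ is free over $R[\sovrA_M]$, the left adjoint is exact. The implicit reasoning is that, because $G/\ovrP$ is compact, $\Ind_{\ovrP}^G$ is exact \emph{and commutes with arbitrary direct sums}; hence for any $R[\sovrA_M]$-module $V$ one has $\Ind_{\ovrP}^G(W) \otimes_{R[\sovrA_M]} V \cong \Ind_{\ovrP}^G(W \otimes_{R[\sovrA_M]} V)$, and the left adjoint factors as $V \mapsto \ind_{K\cap M}^M\trivrep \otimes_{R[\sovrA_M]} V$ (exact by freeness) followed by $\Ind_{\ovrP}^G$ (exact). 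So the step you flag as ``the main obstacle'' is in fact immediate once one notes that parabolic induction from a closed subgroup with compact quotient preserves colimits; your sheaf argument is a correct but longer route to the same conclusion.
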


\begin{proof}
    The functor $\Ind_{\sovrA_M^+}^{\sovrA_M}$ is right adjoint to the exact functor $\Res_{\sovrA_M^+}^{\sovrA_M} \colon \Mod_R(\sovrA_M) \to \Mod_R(\sovrA_M^+)$.
    Observing that $\R_{\ovrP, K}^G \cong \Hom_{R[G]}(\ind_{\sovrA_M^+}^{\sovrA_M} \ind_K^G\trivrep, \blank)$ and using \cref{thm-comparison}, we see that $\R_{\ovrP, K}^G$ is right adjoint to the functor
    \[
        \Mod_R(\sovrA_M) \longrightarrow \Mod_R^{\sm}(G) \, , \qquad V \mapsto \roundbr[\big]{\Ind_{\ovrP}^G \ind_{K \cap M}^M \trivrep} \otimes_{R[\sovrA_M]} V \, .
    \]
    As $\ind_{K \cap M}^M \trivrep$ is free over $R[\sovrA_M]$, also this left adjoint is exact.
    
    From these observations it follows that both right adjoints $\Ind_{\sovrA_M^+}^{\sovrA_M}$ and $\R_{\ovrP, K}^G$ preserve K-injective complexes.
    For the second part of the lemma we also use \cref{lem-derived-comp-invariants-ind}.
\end{proof}

\begin{lemma} \label{lem-locally-finite-acyclic}
        Let $S$ be a finitely generated commutative $R$-algebra and write $\Mod_R^{\lf{}}(S)$ for the full subcategory of $\Mod_R(S) = \Mod_S$ spanned by those $S$-modules $V$ such that $S \cdot v \subseteq V$ is finitely generated over $R$ for all $v \in V$.
        This subcategory contains $0$ and is stable under taking subobjects, quotients, extensions and arbitrary direct sums.
        Moreover, the inclusion functor $\Mod_R^{\lf{}}(S) \to \Mod_R(S)$ preserves injective objects.   
\end{lemma}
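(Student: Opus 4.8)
The closure properties are a routine unwinding of the definition, so I only sketch them and then concentrate on the statement about injectives, where the content lies.

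For the closure properties: clearly $0 \in \Mod_R^{\lf{}}(S)$, and stability under subobjects and quotients is immediate from the definition, since $S\cdot v$ is unchanged on passing to a submodule containing $v$, and a quotient of an $R$-finite module is $R$-finite. The intermediate observation I will use twice is that a finitely generated $S$-module lying in $\Mod_R^{\lf{}}(S)$ is $R$-finite, being a finite sum of the $R$-finite submodules generated by a finite generating set. Using this, together with the fact that $S$ is Noetherian (a finitely generated algebra over the Noetherian ring $R$): for an extension $0 \to V' \to V \to V'' \to 0$ with $V', V'' \in \Mod_R^{\lf{}}(S)$ and $v \in V$ with image $v''$, the module $S\cdot v$ is Noetherian, and in $0 \to S\cdot v \cap V' \to S\cdot v \to S\cdot v'' \to 0$ both outer terms are $R$-finite ($S\cdot v''$ because $V'' \in \Mod_R^{\lf{}}(S)$, and $S\cdot v \cap V'$ because it is a finitely generated $S$-submodule of $V' \in \Mod_R^{\lf{}}(S)$), so $S\cdot v$ is $R$-finite. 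Stability under arbitrary direct sums is obvious. In particular $\Mod_R^{\lf{}}(S)$ is a localizing subcategory of the Grothendieck category $\Mod_S$, hence itself Grothendieck with enough injectives, and the inclusion admits a right adjoint $(\blank)^{\lf{}}$.

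For the preservation of injectives my plan is to reduce to a \emph{stability} assertion: for every $M \in \Mod_R^{\lf{}}(S)$, the injective envelope $E$ of $M$ formed in $\Mod_S$ again lies in $\Mod_R^{\lf{}}(S)$. Granting this, let $I$ be injective in $\Mod_R^{\lf{}}(S)$ and $E$ its injective envelope in $\Mod_S$; then $E \in \Mod_R^{\lf{}}(S)$ by stability, so $E$ is also injective in $\Mod_R^{\lf{}}(S)$ (which is closed under subobjects), and since $I \subseteq E$ is essential and $I$ is injective there, the inclusion splits in $\Mod_R^{\lf{}}(S)$, forcing $I = E$ by essentiality; hence $I$ is injective in $\Mod_S$. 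To prove the stability assertion I would fix $x \in E$ and study $S\cdot x$, which is finitely generated over the Noetherian ring $S$: its submodule $S\cdot x \cap M$ is essential in $S\cdot x$ (as $M$ is essential in $E$), is finitely generated over $S$, and lies in $M$, hence is $R$-finite; since $R$ is Artinian, an $R$-finite $S$-module has finite length over $S$, so all of its associated primes are maximal ideals, with each composition factor of the form $S/\mathfrak{m}$ where $S/\mathfrak{m}$ is again $R$-finite. As passing to an essential submodule does not change the associated primes, every associated prime of $S\cdot x$ is one of these maximal ideals; a finitely generated module all of whose associated primes are maximal has support equal to that finite set of maximal ideals, so a prime filtration of $S\cdot x$ has all graded pieces of the form $S/\mathfrak{m}$ with $S/\mathfrak{m}$ $R$-finite, and therefore $S\cdot x$ is $R$-finite. (Alternatively, one may invoke the classical fact that over a Noetherian ring every hereditary torsion theory is stable.) The only genuine obstacle here is the stability assertion, i.e.\ keeping the injective envelope inside $\Mod_R^{\lf{}}(S)$; everything else is formal.
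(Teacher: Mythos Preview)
Your proof is correct, but the key step is handled differently from the paper. Both arguments amount to showing that the hereditary torsion class $\Mod_R^{\lf{}}(S)$ is \emph{stable} (closed under injective envelopes in $\Mod_S$), from which preservation of injectives follows formally, as you explain. The paper verifies stability by showing directly that for any injective $S$-module $I$ the locally finite part $I^{\lf{}}$ is again injective in $\Mod_S$: one writes $I^{\lf{}} = \colim_{\ka \in \cA^{\op}} I[\ka]$ over the cofiltered set $\cA$ of ideals $\ka$ with $S/\ka$ finite over $R$, observes that $\cA$ is closed under taking powers of ideals, and then invokes the Baer-criterion argument of \parencite[Lemma III.3.2]{har}. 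Your route instead shows closure under injective envelopes by analysing associated primes: an $R$-finite $S$-module has finite $S$-length (here you use that $R$ is Artinian), so its associated primes are maximal with $R$-finite residue field; essential extensions preserve associated primes, and a finitely generated module with only such associated primes is $R$-finite via a prime filtration. The paper's argument needs only that $R$ is Noetherian, while yours genuinely uses the Artinian hypothesis; on the other hand, your argument is self-contained commutative algebra and does not defer to an external reference.
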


\begin{proof}
    The first part of the claim is a straightforward computation.
    In order to prove the second part, it suffices to see that for an injective $S$-module $I$ the submodule of locally finite vectors $I^{\lf{}} \subseteq I$ is again an injective $S$-module.
    For this we note that
    \[
        I^{\lf{}} \, = \, \colim_{\ka \in \cA^{\op}} I[\ka] \, ,
    \]
    where $\cA$ is the (cofiltered) partially ordered set of those ideals $\ka \subseteq S$ such that $S/\ka$ is finitely generated over $R$ and $I[\ka] \subseteq I$ denotes the $\ka$-torsion submodule.
    Now, as for every $\ka \in \cA$ also every power $\ka^n$ is again contained in $\cA$, the proof of \parencite[Lemma III.3.2]{har} shows that $I^{\lf{}}$ is injective as desired.    
\end{proof}

\begin{cor}\label{cor-acyclic}
    The locally finite $\sovrA_M^+$-representations are acyclic for
        \[
            \roundbr[\big]{\Ind_{\sovrA_M^+}^{\sovrA_M} (\blank)}^{\lf{\sovrA_M}} \colon \Mod_R(\sovrA_M^+) \longrightarrow \Mod_R^{\lf{}}(\sovrA_M) \, .
        \]
\end{cor}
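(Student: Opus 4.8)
The plan is to compute the right derived functors of the (left exact, being a composite of right adjoints) functor $F \coloneqq \roundbr[\big]{\Ind_{\sovrA_M^+}^{\sovrA_M}(\blank)}^{\lf{\sovrA_M}}$ on a given locally finite $\sovrA_M^+$-representation $V$ by means of an injective resolution of $V$ whose terms are themselves \emph{locally finite}; the point is that on locally finite representations $F$ collapses, via \cref{lem:comparing-ind-big-small}, to the exact functor $\ind_{\sovrA_M^+}^{\sovrA_M}$.

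First I would produce such a resolution $V \to J^{\bullet}$ in $\Mod_R(\sovrA_M^+)$ with all terms $J^n$ locally finite. Starting from an injective hull $V \hookrightarrow E$ in $\Mod_R(\sovrA_M^+)$, set $J^0 \coloneqq E^{\lf{\sovrA_M^+}}$: applying \cref{lem-locally-finite-acyclic} to the finitely generated commutative $R$-algebra $S = R[\sovrA_M^+]$ shows that this submodule of locally finite vectors is again injective in $\Mod_R(\sovrA_M^+)$, it is locally finite, it contains $V$, and the quotient $J^0/V$ is again locally finite (quotients of locally finite representations being locally finite). Iterating the construction with $J^0/V$ in place of $V$ yields the desired resolution.

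Next I would apply $F$ termwise. Since each $J^n$ is a locally finite $\sovrA_M^+$-representation, \cref{lem:comparing-ind-big-small} — applied with $\sovrA_M^+ \subseteq \sovrA_M$ in the role of $A_M^+ \subseteq A_M$ — yields a natural isomorphism $F(J^n) = \roundbr[\big]{\Ind_{\sovrA_M^+}^{\sovrA_M} J^n}^{\lf{\sovrA_M}} \cong \ind_{\sovrA_M^+}^{\sovrA_M} J^n$, and hence $F(J^{\bullet}) \cong \ind_{\sovrA_M^+}^{\sovrA_M} J^{\bullet} = R[\sovrA_M] \otimes_{R[\sovrA_M^+]} J^{\bullet}$ as complexes. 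As $R[\sovrA_M] = R[\sovrA_M^+][b_0^{-1}]$ is a localization of $R[\sovrA_M^+]$, the functor $\ind_{\sovrA_M^+}^{\sovrA_M}$ is exact, so
\[
    \bbR^n F(V) \, = \, \HH^n(F(J^{\bullet})) \, \cong \, \ind_{\sovrA_M^+}^{\sovrA_M} \HH^n(J^{\bullet}) \, = \, 0
    \qquad \text{for all } n > 0 ,
\]
since $J^{\bullet}$ is a resolution of $V$; this is exactly the claimed acyclicity (and in degree $0$ one recovers $F(V) \cong \ind_{\sovrA_M^+}^{\sovrA_M} V$).

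The step needing the most care is the very first one, the choice of resolution. One should resist factoring $\bbR F \simeq \bbR(\blank)^{\lf{\sovrA_M}} \circ \bbR\Ind_{\sovrA_M^+}^{\sovrA_M}$ and trying to run the associated Grothendieck spectral sequence, because $\bbR^q\Ind_{\sovrA_M^+}^{\sovrA_M}$ does \emph{not} vanish on locally finite representations: already $\bbR^1\Ind_{\sovrA_M^+}^{\sovrA_M} V$ can be nonzero — for instance when $b_0$ acts on $V$ locally nilpotently with unbounded nilpotence degree, so that the tower computing $\Ind_{\sovrA_M^+}^{\sovrA_M} V$ fails the Mittag-Leffler condition. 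Choosing the resolution inside the subcategory of locally finite representations, where the composite $F$ becomes the exact functor $\ind_{\sovrA_M^+}^{\sovrA_M}$, is precisely what circumvents this.
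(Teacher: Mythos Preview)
Your proof is correct and follows essentially the same approach as the paper: both use \cref{lem-locally-finite-acyclic} to obtain an injective resolution inside the locally finite subcategory, then invoke \cref{lem:comparing-ind-big-small} to identify the composite functor with the exact functor $\ind_{\sovrA_M^+}^{\sovrA_M}$ on that subcategory. The paper phrases this more abstractly (``the inclusion preserves injectives, hence it suffices that $F$ is exact on the subcategory''), whereas you spell out the resolution explicitly, but the content is the same.
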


\begin{proof}
    \cref{lem-locally-finite-acyclic} applies to the inclusion functor $\Mod_R^{\lf{}}(\sovrA_M^+) \to \Mod_R(\sovrA_M^+)$.
    Thus it suffices to show that $(\Ind_{\sovrA_M^+}^{\sovrA_M} (\blank))^{\lf{\sovrA_M}}$ is exact when restricted to $\Mod_R^{\lf{}}(\sovrA_M^+)$.
    But on this category it agrees with the exact functor $\ind_{\sovrA_M^+}^{\sovrA_M}$ by \cref{lem:comparing-ind-big-small}.
\end{proof}

\begin{theorem} \label{thm-right-derived-ord}
    Let $V^{\bullet}$ be a complex of smooth $G$-representations with locally admissible cohomology.
    Then we have natural isomorphisms of smooth $M$-representations
    \[
        \bbR^n \Ord_P^G V^{\bullet}
        \, \cong \,
        \roundbr[\big]{\Ind_{M^+}^M \HH^n(N_0, V^{\bullet})}^{\lf{A_M}}
        \, \cong \,
        \ind_{M^+}^M \HH^n(N_0, V^{\bullet}) \, .
    \]
\end{theorem}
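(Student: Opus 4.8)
The plan is to reduce the statement about $\Ord_P^G$ to the statement about $\R_{\ovrP}^G$ already proved in \cref{thm-right-derived-rpg}, using the compatible colimit presentations of both functors over the tower $(K_i)$. First I would observe that the key structural inputs are all in place: by \cref{lem-derived-comp-invariants-ind-lf} we have $\bbR\Ord_{P,K}^G \simeq \bbR(\Ind_{\sovrA_M^+}^{\sovrA_M}(\blank))^{\lf{\sovrA_M}} \circ \bbR(\blank)^K$, and the first functor on the right has the locally finite $\sovrA_M^+$-representations as an acyclic class by \cref{cor-acyclic}. Since $V^\bullet$ has locally admissible cohomology, \cref{lem:locally-finite-cohomology} tells us that $\HH^q(K_i, V^\bullet)$ is $A_M^+$-locally finite, hence $\sovrA_M^+$-locally finite. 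So the Grothendieck spectral sequence
\[
    E_2^{p,q} \, = \, \bbR^p\roundbr[\big]{\Ind_{\sovrA_M^+}^{\sovrA_M}(\blank)}^{\lf{\sovrA_M}} \HH^q(K_i, V^\bullet) \, \Longrightarrow \, \bbR^{p+q}\Ord_{P,K_i}^G V^\bullet
\]
collapses to the edge $p = 0$, giving $\bbR^n\Ord_{P,K_i}^G V^\bullet \cong (\Ind_{\sovrA_M^+}^{\sovrA_M}\HH^n(K_i, V^\bullet))^{\lf{\sovrA_M}} \cong \ind_{\sovrA_M^+}^{\sovrA_M}\HH^n(K_i, V^\bullet)$, the last step by \cref{lem:comparing-ind-big-small}.

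Next I would pass to the colimit over $i$. One needs that $\bbR^n\Ord_P^G V^\bullet \cong \colim_i \bbR^n\Ord_{P,K_i}^G V^\bullet$, which should follow as in the $\R_{\ovrP}^G$ case from the isomorphism $\Ord_P^G \cong \colim_i \Ord_{P,K_i}^G$ on the abelian level together with the fact that filtered colimits are exact in $\Mod_R^{\sm}(M)$ and commute with the relevant derived functors (one can use that a filtered colimit of acyclic resolutions, or of the cochain complexes computing continuous cohomology, computes the derived functor of the colimit; the fact that profinite cohomology commutes with filtered colimits, invoked already in the proof of \cref{lem:locally-finite-cohomology}, is the crucial point). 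Taking the colimit of $\ind_{\sovrA_M^+}^{\sovrA_M}\HH^n(K_i, V^\bullet)$ over the tower and using $\HH^n(N_0, V^\bullet) = \colim_i \HH^n(K_i, V^\bullet)$ together with the identity $\Ind_{M^+}^M(\blank)^{N_0}$-analogue — more precisely, using that $\ind_{M^+}^M$ applied to this colimit recovers $\ind_{M^+}^M\HH^n(N_0, V^\bullet)$, and that $(\Ind_{M^+}^M(\blank))^{\lf{A_M}}$ commutes with filtered colimits by \parencite[Lemma 3.2.2]{em1} — yields both displayed isomorphisms. The comparison of the $\colim_i \ind_{\sovrA_M^+}^{\sovrA_M}\HH^n(K_i,V^\bullet)$ with $\ind_{M^+}^M\HH^n(N_0, V^\bullet)$ and with $(\Ind_{M^+}^M\HH^n(N_0, V^\bullet))^{\lf{A_M}}$ is exactly the kind of manipulation carried out at the end of the proof of \cref{thm-right-derived-rpg}, now with $\HH^q(K,V^\bullet)$ merely $A_M^+$-locally finite rather than finite.

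The main obstacle I anticipate is the second step rather than the first: the collapse of the spectral sequence is immediate, but justifying that forming right derived functors commutes with the filtered colimit over the tower $(K_i)$ requires a little care, since $\bbR\Ord_P^G$ is defined via a K-injective resolution of $V^\bullet$ as a $G$-representation and not obviously via the tower. The clean way around this is to note that $\bbR(\blank)^{N_0} \simeq \colim_i \bbR(\blank)^{K_i}$ as functors $\cD(G) \to \cD(N_0\text{-triv reps})$ — which holds because continuous cochain complexes commute with filtered colimits — and that the remaining functor $(\Ind_{M^+}^M(\blank))^{\lf{A_M}}$, being a composite of a colimit-preserving functor with an exact left adjoint restricted to locally finite representations, is exact on the relevant subcategory (by \cref{lem:comparing-ind-big-small} it agrees there with $\ind_{M^+}^M$) and hence its derived functor also commutes with the colimit. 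Assembling these two facts gives $\bbR\Ord_P^G V^\bullet \simeq \ind_{M^+}^M \bbR(\blank)^{N_0} V^\bullet$ on complexes with locally admissible cohomology, which upon taking $\HH^n$ is precisely the claim; the identification with $(\Ind_{M^+}^M\HH^n(N_0,V^\bullet))^{\lf{A_M}}$ then follows from \cref{lem:comparing-ind-big-small} applied to the $A_M^+$-locally finite representation $\HH^n(N_0, V^\bullet)$.
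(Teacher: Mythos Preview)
Your proposal is correct and follows essentially the same route as the paper: collapse the Grothendieck spectral sequence for each $K_i$ using \cref{lem-derived-comp-invariants-ind-lf}, \cref{lem:locally-finite-cohomology}, and \cref{cor-acyclic}, then pass to the colimit over $i$ using that $(\Ind_{A_M^+}^{A_M}(\blank))^{\lf{A_M}}$ commutes with filtered colimits, and finish with \cref{lem:comparing-ind-big-small}. If anything, you are more explicit than the paper about the step $\bbR^n\Ord_P^G \cong \colim_i \bbR^n\Ord_{P,K_i}^G$, which the paper treats as implicit (just as it does in the proof of \cref{thm-right-derived-rpg}).
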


\begin{proof}
    The structure of this proof is almost identical to the one of \cref{thm-right-derived-rpg}.
    By \cref{lem-derived-comp-invariants-ind-lf} we have a spectral sequence
    \[
        E_2^{p, q} \, = \, \bbR^p \roundbr[\big]{\Ind_{\sovrA_M^+}^{\sovrA_M} (\blank)}^{\lf{\sovrA_M}} \HH^q(K, V^{\bullet}) \Longrightarrow \bbR^{p + q} \Ord_{P, K}^G \, .
    \]
    Now the $\HH^q(K, V^{\bullet})$ are $\sovrA_M^+$-locally finite by \cref{lem:locally-finite-cohomology} and consequently $E_2^{p, q} = 0$ for $p > 0$ by \cref{cor-acyclic}.
    The first of the isomorphisms above is now obtained from setting $K = K_i$ for varying $i$, taking the colimit, and using once more that $(\Ind_{A_M^+}^{A_M}(\blank))^{\lf{A_M}}$ commutes with filtered colimits.

    The second isomorphism then is again obtained from \cref{lem:comparing-ind-big-small}.
\end{proof}

\begin{cor} \label{cor-ord-rpg}
    The composition
    \[
        \bbR \Ord_P^G \longrightarrow \bbR_{\ovrP}^G \xlongrightarrow{\psi} \ind_{M^+}^M \bbR (\blank)^{N_0}
    \]
    of natural transformations of functors $\cD(G) \to \cD(M)$ is an equivalence on complexes with locally admissible cohomology.

    Consequently, $\bbR \Ord_P^G \to \bbR_{\ovrP}^G$ is an equivalence on complexes with admissible cohomology.
\end{cor}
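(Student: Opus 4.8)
The plan is to detect the asserted equivalence on cohomology objects and to enlarge the commutative diagram from the proof of \cref{cor-right-derived-rpg} by one further column on the left. Since a morphism in $\cD(M)$ is an equivalence precisely when it induces isomorphisms on all cohomology objects, and since $\ind_{M^+}^M$ is exact --- so that $\HH^n\roundbr[\big]{\ind_{M^+}^M \bbR(\blank)^{N_0} V^{\bullet}} \cong \ind_{M^+}^M \HH^n(N_0, V^{\bullet})$ --- it suffices to show that for every $n \geq 0$ and every complex $V^{\bullet}$ with locally admissible cohomology the composite in question induces an isomorphism $\bbR^n \Ord_P^G V^{\bullet} \xrightarrow{\sim} \ind_{M^+}^M \HH^n(N_0, V^{\bullet})$.

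First I would record that, by \cref{cor-describing-ord}, the natural transformation $\bbR \Ord_P^G \to \bbR_{\ovrP}^G$ is the total right derived functor of the inclusion $(\blank)^{\lf{A_M}} \circ \R_{\ovrP}^G \hookrightarrow \R_{\ovrP}^G$; composing with $\psi$ and unwinding its construction from \cref{cor-right-derived-rpg}, the composite of the statement is the total right derived functor of $\Ord_P^G \hookrightarrow \R_{\ovrP}^G \to \Ind_{M^+}^M (\blank)^{N_0} \to \ind_{M^+}^M (\blank)^{N_0}$, the target being reidentified via the canonical equivalence $\bbR \roundbr[\big]{\ind_{M^+}^M (\blank)^{N_0}} \simeq \ind_{M^+}^M \bbR (\blank)^{N_0}$. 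Working at a fixed level $K = K_i$ and passing to the filtered colimit over $i$ exactly as in the proofs of \cref{thm-right-derived-rpg} and \cref{thm-right-derived-ord}, and invoking \cref{lem-derived-comp-invariants-ind} and \cref{lem-derived-comp-invariants-ind-lf}, one obtains a commutative diagram of functors $\cD(G) \to \Mod_R^{\sm}(M)$
\[
\begin{tikzcd}[column sep = small]
\bbR^n \Ord_P^G \ar[r] \ar[d]
& \bbR_{\ovrP}^{G, n} \ar[r] \ar[d]
& \bbR^n \roundbr[\big]{\Ind_{M^+}^M (\blank)^{N_0}} \ar[r] \ar[d]
& \bbR^n \roundbr[\big]{\ind_{M^+}^M (\blank)^{N_0}} \ar[d, "\cong"]
\\
\colim_i \roundbr[\big]{\Ind_{A_M^+}^{A_M} \HH^n(K_i, \blank)}^{\lf{A_M}} \ar[r]
& \colim_i \Ind_{A_M^+}^{A_M} \HH^n(K_i, \blank) \ar[r]
& \Ind_{M^+}^M \HH^n(N_0, \blank) \ar[r]
& \ind_{M^+}^M \HH^n(N_0, \blank)
\end{tikzcd}
\]
whose three right-hand columns are precisely those of the diagram in the proof of \cref{cor-right-derived-rpg}, whose lower left horizontal arrow is the colimit of the inclusions of $A_M$-locally finite parts, and whose left-hand square commutes because the spectral-sequence identifications of the two cited theorems are compatible with the passage from $\Ord_{P, K}^G = (\blank)^{\lf{\sovrA_M}} \circ \R_{\ovrP, K}^G$ to $\R_{\ovrP, K}^G$.

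Now fix $V^{\bullet}$ with locally admissible cohomology. Then each $\HH^q(K_i, V^{\bullet})$ is $A_M^+$-locally finite by \cref{lem:locally-finite-cohomology}, so the left vertical arrow of the diagram is an isomorphism by \cref{thm-right-derived-ord}, while the composite of the three lower horizontal arrows is the canonical isomorphism $\bbR^n \Ord_P^G V^{\bullet} \cong \ind_{M^+}^M \HH^n(N_0, V^{\bullet})$ of \cref{thm-right-derived-ord} --- here one uses \cref{lem:comparing-ind-big-small} to replace the $\lf{A_M}$-part by the small induction, together with the fact that $\roundbr[\big]{\Ind_{A_M^+}^{A_M}(\blank)}^{\lf{A_M}}$ commutes with filtered colimits. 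Since the rightmost vertical arrow is always an isomorphism, a diagram chase forces the composite of the three upper horizontal arrows --- which is exactly $\HH^n$ of the composite natural transformation of the statement --- to be an isomorphism. This proves the first assertion. The second assertion then follows by cancellation: on a complex with admissible cohomology both the composite $\bbR \Ord_P^G \to \bbR_{\ovrP}^G \xrightarrow{\psi} \ind_{M^+}^M \bbR(\blank)^{N_0}$ (by the first assertion, admissible cohomology being in particular locally admissible) and $\psi$ itself (by \cref{cor-right-derived-rpg}) are equivalences, hence so is $\bbR \Ord_P^G \to \bbR_{\ovrP}^G$.

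The main obstacle I anticipate is establishing the commutativity of the left-hand square of the diagram in a way uniform in $i$: concretely, that after the spectral sequences of \cref{lem-derived-comp-invariants-ind} and \cref{lem-derived-comp-invariants-ind-lf} have degenerated on complexes with locally admissible cohomology, the natural transformation $\bbR^n \Ord_{P, K}^G \to \bbR_{\ovrP, K}^{G, n}$ corresponds to the inclusion $\roundbr[\big]{\Ind_{\sovrA_M^+}^{\sovrA_M} \HH^n(K, \blank)}^{\lf{\sovrA_M}} \hookrightarrow \Ind_{\sovrA_M^+}^{\sovrA_M} \HH^n(K, \blank)$ of $\sovrA_M$-locally finite parts. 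This is not deep but requires carefully tracking the identifications set up in the previous sections.
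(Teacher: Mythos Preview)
Your proposal is correct and follows essentially the same route as the paper: the paper's proof simply says that the first part follows from \cref{thm-right-derived-ord} in the same way that \cref{cor-right-derived-rpg} follows from \cref{thm-right-derived-rpg}, and that the second part then follows from \cref{cor-right-derived-rpg} --- which is exactly your enlarged-diagram argument and cancellation step, spelled out in detail. The compatibility concern you flag at the end is the only point requiring care, and it is indeed just a matter of tracking the spectral-sequence identifications, not a genuine obstacle.
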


\begin{proof}
    The first part follows from \cref{thm-right-derived-ord} in the same way as \cref{cor-right-derived-rpg} follows from \cref{thm-right-derived-rpg}.
    The second part then follows from \cref{cor-right-derived-rpg}.
\end{proof}

\medskip

\appendix

\section{Second Adjointness (joint with Claudius Heyer)} \label{sec-adj}

We continue to assume that $R$ is an Artinian local ring with residue field of characteristic $p$.
Following \parencite{heyer-left-adjoint} and \parencite{heyer-geom} we write $\bbL_N \colon \cD(P) \to \cD(M)$ for the left adjoint to inflation and $\bbL_P^G \colon \cD(G) \to \cD(M)$ for the composition of $\bbL_N$ with restriction; $\bbL_P^G$ is then left adjoint to the parabolic induction functor $\Ind_P^G$.
Note that in \parencite{heyer-left-adjoint}, the existence of these left adjoints is only shown in the case when $R$ is a field, but the arguments generalize; for a different proof of the existence for general $R$, see \parencite[Lemma 5.4.6]{heyer-mann}.

The Geometrical Lemma \parencite[Theorem 3.3.5]{heyer-geom} gives a natural isomorphism
\[
	\omega \otimes_R^{\bbL} (\blank)
	\, \simeq \,
	\bbL_N \cInd_{\ovrP}^{\ovrP P}
\]
of functors $\cD(M) \to \cD(M)$, where $\omega \coloneqq \bbL_N \cInd_{\ovrP}^{\ovrP P} \trivrep$ is a character sitting in cohomological degree $-\dim_{\Qp} N$, see \cref{lem:hecke-description-left-adjoint} below.
Composing this with $\bbL_N \cInd_{\ovrP}^{\ovrP P} \to \bbL_P^G \Ind_{\ovrP}^G$ and passing to the right mate yields a natural transformation
\[
	\alpha \colon \omega \otimes_R^{\bbL} \bbR_{\ovrP}^G \longrightarrow \bbL_P^G
\]
of functors $\cD(M) \to \cD(M)$.
In \cref{thm-sec-adj} below we prove that $\alpha$ is an equivalence on complexes with admissible cohomology.

\medskip

We start with some preliminary considerations and recollections. 

\begin{lemma} \label{lem-proj-formula}
    The functor $\ind_{M^+}^M \colon \cD(M^+) \to \cD(M)$ is symmetric monoidal, i.e.\ for complexes $V^{\bullet}$ and $W^{\bullet}$ of smooth $M^+$-representations we have a natural equivalence
	\[
		\ind_{M^+}^M \roundbr[\big]{V^{\bullet} \otimes_R^{\bbL} W^{\bullet}}
		\, \simeq \,
		\ind_{M^+}^M V^{\bullet} \otimes_R^{\bbL} \ind_{M^+}^M W^{\bullet}
	\]
	in $\cD(M)$.

	Consequently, for a complex $V^{\bullet}$ of smooth $M^+$-representations and a complex $W^{\bullet}$ of smooth $M$-representations $W^{\bullet}$, we obtain the projection formula
	\[
		\ind_{M^+}^M \roundbr[\big]{V^{\bullet} \otimes_R^{\bbL} W^{\bullet}}
		\, \simeq \,
		\ind_{M^+}^M V^{\bullet} \otimes_R^{\bbL} W^{\bullet}
	\]
	in $\cD(M)$.
\end{lemma}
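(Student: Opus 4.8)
The plan is to reduce everything to the elementary fact that inverting a central element is an exact and symmetric monoidal operation, and then to read off the projection formula.

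First I would identify $\ind_{M^+}^M$ with a localization. Since $b_0 \in A_M^+$ is central in $M$ and $M = M^+[b_0^{-1}]$, the monoid algebra $R[M]$ is the localization of $R[M^+]$ at the central multiplicative set $\{b_0^n : n \ge 0\}$, so for a smooth $M^+$-representation $W$ there is a natural identification
\[
    \ind_{M^+}^M W \,=\, R[M] \otimes_{R[M^+]} W \,\cong\, W[b_0^{-1}] \,=\, \colim\roundbr[\big]{W \xrightarrow{b_0} W \xrightarrow{b_0} \dotsb} \,.
\]
In particular $\ind_{M^+}^M$ is exact, being a filtered colimit of exact functors; hence it applies degreewise to complexes, agrees with its own left derived functor, sends flat (even free) $R$-modules to flat $R$-modules, and therefore preserves K-flatness of the underlying complexes of $R$-modules.

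Next I would prove the symmetric monoidal statement. The comparison morphism
\[
    \ind_{M^+}^M\roundbr[\big]{V^{\bullet} \otimes_R^{\bbL} W^{\bullet}} \longrightarrow \ind_{M^+}^M V^{\bullet} \otimes_R^{\bbL} \ind_{M^+}^M W^{\bullet}
\]
is the canonical oplax structure map of $\ind_{M^+}^M$, the left adjoint of the strong symmetric monoidal functor $\Res_{M^+}^M$ (for the tensor products with diagonal $M^+$-action); I want it to be an equivalence. Picking a K-flat resolution $\widetilde V^{\bullet} \to V^{\bullet}$ in $\cK(\Mod_R^{\sm}(M^+))$ whose terms are flat over $R$ --- for instance built out of the representations $R[M^+/U]$ for $U \subseteq M^+$ compact open, which are free over $R$ --- both sides are computed degreewise from $\widetilde V^{\bullet}$ by the previous step. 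It then suffices to prove the underived assertion: for smooth $M^+$-representations $V$ and $W$ the natural map $(V \otimes_R W)[b_0^{-1}] \to V[b_0^{-1}] \otimes_R W[b_0^{-1}]$ is an isomorphism. Writing the target as the filtered colimit over $\mathbb{N}^2$ of copies of $V \otimes_R W$ with transition maps $b_0 \otimes \mathrm{id}$ and $\mathrm{id} \otimes b_0$, restricting to the cofinal diagonal $\mathbb{N} \hookrightarrow \mathbb{N}^2$, and noting that the resulting transition map $b_0 \otimes b_0$ is precisely the diagonal action of $b_0$ on $V \otimes_R W$, one identifies the target with $(V \otimes_R W)[b_0^{-1}]$. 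Compatibility of this isomorphism with the associativity, symmetry and unit constraints --- the unit being $\trivrep$, whose image $R[b_0^{-1}] = R$ is again the unit --- follows from the same colimit description and I would treat it as routine.

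Finally, the projection formula follows by applying the monoidal equivalence to $V^{\bullet}$ and $\Res_{M^+}^M W^{\bullet}$, which gives
\[
    \ind_{M^+}^M\roundbr[\big]{V^{\bullet} \otimes_R^{\bbL} \Res_{M^+}^M W^{\bullet}} \,\simeq\, \ind_{M^+}^M V^{\bullet} \otimes_R^{\bbL} \ind_{M^+}^M \Res_{M^+}^M W^{\bullet} \,,
\]
together with the observation that the counit $\ind_{M^+}^M \Res_{M^+}^M W^{\bullet} \to W^{\bullet}$ of the adjunction $\ind_{M^+}^M \dashv \Res_{M^+}^M$ is an equivalence: degreewise it is, under the identification above, the canonical map $W[b_0^{-1}] \to W$, which is an isomorphism because $b_0$ already acts invertibly on every smooth $M$-representation.

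I do not anticipate a genuine obstacle here: the mathematical content sits entirely in the elementary pointwise isomorphism above. The only point that needs care is the bookkeeping that upgrades this pointwise statement to an equivalence of derived functors compatible with all the monoidal coherence data, which is precisely what the K-flat resolution and the remark that $\ind_{M^+}^M$ preserves $R$-flatness are there to handle.
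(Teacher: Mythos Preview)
Your proposal is correct and follows essentially the same route as the paper: both arguments identify $\ind_{M^+}^M$ with inverting the central element $b_0$, obtain the oplax monoidal structure from the adjunction with the symmetric monoidal restriction, and then verify strongness via the cofinality of the diagonal $\mathbb{N} \hookrightarrow \mathbb{N}^2$ in the double colimit; for the projection formula both use that $\Res_{M^+}^M$ is fully faithful (equivalently, that the counit $\ind_{M^+}^M \Res_{M^+}^M \to \id$ is an equivalence). The only cosmetic difference is in the reduction step: the paper passes to the non-smooth derived category $\cD(\Mod_R(M^+))$ and checks the map on the single generator $R[M^+]$, whereas you stay in smooth representations, take a K-flat resolution with $R$-flat terms, and check the underived statement for arbitrary $V$ and $W$; both land on the identical colimit computation.
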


\begin{proof}
	As the restriction functor $\Res_{M^+}^M \colon \cD(M) \to \cD(M^+)$ carries a natural symmetric monoidal structure we obtain an oplax symmetric monoidal structure on its left adjoint $\ind_{M^+}^M$ and our task is to show that this structure is in fact symmetric monoidal.
	To this end, we observe that we have a $2$-commutative diagram
	\[
	\begin{tikzcd}
		\cD(M^+) \ar[r, "{\ind_{M^+}^M}"] \ar[d]
		& \cD(M) \ar[d]
		\\
		\cD(\Mod_R(M^+)) \ar[r, "{\ind_{M^+}^M}"]
		& \cD(\Mod_R(M))
	\end{tikzcd}
	\]
	where the vertical functors are conservative and symmetric monoidal and the horizontal functors are oplax symmetric monoidal (for the lower functor this follows from the same argument as for the upper one).

	It thus suffices to show the symmetric monoidality of the lower functor, i.e.\ we need to show that the natural morphism
	\[
		\ind_{M^+}^M \roundbr[\big]{V^{\bullet} \otimes_R^{\bbL} W^{\bullet}} \longrightarrow \ind_{M^+}^M V^{\bullet} \otimes_R^{\bbL} \ind_{M^+}^M W^{\bullet}
	\]
	is an equivalence for all $V^{\bullet}, W^{\bullet} \in \cD(\Mod_R(M^+))$.
	As both sides are exact and commute with direct sums in $V^{\bullet}$ and $W^{\bullet}$ we can reduce to the case where $V^{\bullet} = W^{\bullet} = R[M^+]$ as this object is a generator of $\cD(\Mod_R(M^+))$.
	Writing $R[M] = \colim_{b_0} R[M^+]$, the morphism above identifies with the morphism
	\[
		\colim_{b_0 \otimes b_0} R[M^+] \otimes_R R[M^+] \longrightarrow \colim_{(b_0 \otimes 1, 1 \otimes b_0)} R[M^+] \otimes_R R[M^+]
	\]
	in $\Mod_R(M)$, where the colimit on the right side is taken over the index set $\ZZ_{\geq 0}^2$; it is an isomorphism by the cofinality of the diagonal $\ZZ_{\geq 0} \to \ZZ_{\geq 0}^2$.

	The second part now follows from the first part using the fully faithfulness of $\Res_{M^+}^M$.
\end{proof}

\medskip

Let us now observe that for a smooth $M$-representation $W$ the $M^+$-equivariant map $W \to (\Ind_{\ovrP}^G W)^{N_0}$ from \cref{cor-describing-right-adjoint} factors through $(\cInd_{\ovrP}^{\ovrP P} W)^{N_0}$; this follows immediately from the given explicit description. 
In this way we obtain a natural transformation
\[\label{equ-hash}\tag{$\#$}
	\theta \colon \id_{\cD(M)} \longrightarrow \bbR \roundbr[\big]{\ind_{M^+}^M (\blank)^{N_0} \cInd_{\ovrP}^{\ovrP P}}
	\, \simeq \,
	\ind_{M^+}^M \bbR (\blank)^{N_0} \cInd_{\ovrP}^{\ovrP P}
\]
of functors $\cD(M) \to \cD(M)$. Here we also used that $\cInd_{\ovrP}^{\ovrP P}$ and $\ind_{M^+}^M$ are exact and that
\[
	\cInd_{\ovrP}^{\ovrP P} W^{\bullet}
	\, \cong \,
	\cInd_1^N W^{\bullet} \, \cong \,
	\bigoplus_{N/N_0} \Ind_1^{N_0} W^{\bullet}
\]
is acyclic for $(\blank)^{N_0}$ for every complex $W^{\bullet} \in \cK(\Mod_R^{\sm}(M))$.
By construction we have a commutative diagram
\[
\begin{tikzcd}
	\id_{\cD(M)} \ar[r, "\eta"] \ar[d, "\theta"]
	& \bbR_{\ovrP}^G \Ind_{\ovrP}^G \ar[d, "\psi_{\Ind_{\ovrP}^G}"]
	\\
	\ind_{M^+}^M \bbR (\blank)^{N_0} \cInd_{\ovrP}^{\ovrP P} \ar[r]
	& \ind_{M^+}^M \bbR (\blank)^{N_0} \Ind_{\ovrP}^G
\end{tikzcd}
\]
of functors $\cD(M) \to \cD(M)$, where $\eta$ is the unit of the adjunction between $\Ind_{\ovrP}^G$ and $\bbR_{\ovrP}^G$ and $\psi$ was introduced in \cref{cor-right-derived-rpg}.
For the following lemma compare also \parencite[Proposition 4.2.7]{em1}.

\begin{lemma}\label{lem-theta-equivalence}
	The natural transformation $\theta$ from \eqref{equ-hash} is an equivalence.
\end{lemma}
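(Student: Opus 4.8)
The plan is to reduce the statement to the abelian level and then carry out an explicit computation in a concrete model of $\cInd_{\ovrP}^{\ovrP P} W$. As recalled right after \eqref{equ-hash}, $\cInd_{\ovrP}^{\ovrP P} W^{\bullet}$ is acyclic for $(\blank)^{N_0}$; since moreover $\cInd_{\ovrP}^{\ovrP P}$ and $\ind_{M^+}^M$ are exact, the target functor $\ind_{M^+}^M \bbR(\blank)^{N_0} \cInd_{\ovrP}^{\ovrP P}$ is the termwise extension to $\cD(M)$ of the \emph{exact} functor $G \coloneqq \ind_{M^+}^M (\blank)^{N_0} \cInd_{\ovrP}^{\ovrP P}$ on $\Mod_R^{\sm}(M)$, and $\theta$ is the termwise extension of the corresponding natural transformation $\id \to G$. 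Passing to cohomology objects, $\theta$ is an equivalence on $\cD(M)$ if and only if $\theta_W \colon W \to G(W) = \ind_{M^+}^M (\cInd_{\ovrP}^{\ovrP P} W)^{N_0}$ is an isomorphism for every smooth $M$-representation $W$. Finally, since $M = M^+[b_0^{-1}]$, the counit $\ind_{M^+}^M \Res_{M^+}^M W \to W$ is an isomorphism (the $b_0$-action on $W$ being already invertible), so $\theta_W$ is just $\ind_{M^+}^M$ applied to the $M^+$-equivariant map $\iota_W \colon W \to (\cInd_{\ovrP}^{\ovrP P} W)^{N_0}$ appearing before \eqref{equ-hash}.

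Next I would use the standard model $\cInd_{\ovrP}^{\ovrP P} W \cong C^{\infty}_c(N, W)$, on which $N$ acts by right translation and $M$ by $(m \cdot f)(n) = m \cdot f(m^{-1} n m)$; its $N_0$-invariants are exactly the functions constant on each coset $n N_0$, and writing $f_{n, w} \in (\cInd_{\ovrP}^{\ovrP P} W)^{N_0}$ for the function supported on $n N_0$ with value $w \in W$, these span. Unwinding \cref{cor-describing-right-adjoint} shows $\iota_W(w) = f_{1, w}$. The key computation is the identity
\[
    b_0 \bullet f_{n, w} \, = \, f_{b_0 n b_0^{-1},\, b_0 w}
\]
for the $M^+$-Hecke action of \cref{lem-hecke-actions}, which follows from a direct coset manipulation once one observes that inversion turns a set of representatives for $N_0 / b_0 N_0 b_0^{-1}$ into one for $(b_0 N_0 b_0^{-1}) \backslash N_0$. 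Iterating gives $b_0^r \bullet f_{n, w} = f_{b_0^r n b_0^{-r},\, b_0^r w}$, and, $b_0$ being strongly positive, one has $b_0^r n b_0^{-r} \in N_0$ and hence $b_0^r \bullet f_{n, w} = f_{1,\, b_0^r w}$ for all $r \gg 0$ (depending on $n$).

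I would then conclude using $\ind_{M^+}^M V = \colim\bigl(V \xrightarrow{b_0 \bullet} V \xrightarrow{b_0 \bullet} \cdots\bigr)$, with $\theta_W(w)$ the class of $f_{1, w}$. Injectivity is immediate: $b_0 \bullet f_{1, w} = f_{1, b_0 w}$, so if the class of $f_{1, w}$ vanishes then $b_0^r w = 0$ for some $r$, whence $w = 0$ as $b_0$ acts invertibly on $W$. For surjectivity, a representative of any class in the colimit is a finite sum $f = \sum_i f_{n_i, w_i}$; for $r \gg 0$ it becomes $b_0^r \bullet f = f_{1,\, b_0^r(\sum_i w_i)}$, which represents $\theta_W(\sum_i w_i)$. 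Hence $\theta_W$ is an isomorphism for every $W$, and $\theta$ is an equivalence.

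I expect the main obstacle to be the bookkeeping in the Hecke computation — correctly reconciling the right $N$-translation, the conjugation-twisted $M$-action, and the sum over $N_0 / b_0 N_0 b_0^{-1}$ so as to obtain $b_0 \bullet f_{n, w} = f_{b_0 n b_0^{-1}, b_0 w}$ — together with keeping straight which small induction (resp.\ $N_0$-invariants) is being localized at $b_0$; the homological reduction in the first paragraph is routine. Compare also \parencite[Proposition 4.2.7]{em1}.
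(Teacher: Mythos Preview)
Your argument is correct, and the heart of it --- the Hecke identity $m \bullet f_{n,w} = f_{m n m^{-1},\, m w}$ together with strong positivity of $b_0$ --- is exactly the computation the paper carries out. The organization differs: the paper first invokes three projection formulas (for $\cInd_{\ovrP}^{\ovrP P}$, for $(\blank)^{N_0}$, and \cref{lem-proj-formula} for $\ind_{M^+}^M$) to reduce to the case $W = \trivrep$, and then writes down an explicit inverse $\rho$ sending $1_{nN_0} \mapsto 1$. You instead keep $W$ general, identify $\ind_{M^+}^M V$ with the sequential colimit along $b_0$ (using $M = M^+[b_0^{-1}]$ with $b_0$ central), and read off injectivity and surjectivity directly. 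Your route is slightly more self-contained in that it avoids the external projection-formula citations; the paper's route makes the inverse visible. One small correction: the Hecke action you use is not the one from \cref{lem-hecke-actions} (which concerns the right $A_M^+$-action on $\ind_K^G W$) but the $M^+$-action on $V^{N_0}$ defined at the start of \cref{sec-right-adjoint}; your stated formula is nonetheless correct for that action.
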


\begin{proof}
	As $\theta$ is a natural transformation between right derived functors it suffices to show that the corresponding natural transformation
	\[
		\theta' \colon \id_{\Mod_R^{\sm}(M)} \to \ind_{M^+}^M (\blank)^{N_0} \cInd_{\ovrP}^{\ovrP P}
	\]
	is an isomorphism.
	For a smooth $M$-representation $W$ we now have
	\begin{align*}
		\ind_{M^+}^M (\cInd_{\ovrP}^{\ovrP P} W)^{N_0}
		& \, \cong \, \ind_{M^+}^M \roundbr[\big]{\cInd_{\ovrP }^{\ovrP P} \trivrep \otimes_R W}^{N_0} \\
		& \, \cong \, \ind_{M^+}^M\roundbr[\big]{(\cInd_{\ovrP }^{\ovrP P}\trivrep)^{N_0} \otimes_R W}\\
		& \, \cong \, \ind_{M^+}^M(\cInd_{\ovrP }^{\ovrP P}\trivrep)^{N_0} \otimes_R W \, ,
	\end{align*}
	where we used the projection formulas from \parencite[Lemma~2.2.3]{heyer-geom}, the proof of \parencite[Proposition~3.4.18]{heyer-left-adjoint} and \cref{lem-proj-formula}.
	Under these isomorphisms $\theta'_W$ identifies with $\theta'_{\trivrep} \otimes \id_W$ so that it in fact suffices to show that $\theta'_{\trivrep}$ is an isomorphism.

	We view the elements of $\cInd_{\ovrP}^{\ovrP P}\trivrep$ as compactly supported locally constant functions $f \colon N \to R$; the action of $M$ is then induced by the conjugation action on $N$.
	Consequently, the $M^+$-action on $(\cInd_{\ovrP}^{\ovrP P} \trivrep)^{N_0}$ is given concretely by
	\[
		(m \bullet f)(u) \, = \, \sum_{n \in N_0/m N_0 m^{-1}} f(m^{-1} u n m)
	\]
	for $m \in M^+$ and $u \in N$.

	For $n \in N/N_0$ let us write $1_{n N_0} \in (\cInd_{\ovrP}^{\ovrP P} \trivrep)^{N_0}$ for the characteristic function on $n N_0$. These elements form an $R$-basis of $(\cInd_{\ovrP}^{\ovrP P} \trivrep)^{N_0}$.
	We note that $m \bullet 1_{n N_0} = 1_{m n m^{-1} N_0}$ for all $m \in M^+$ and $n \in N$ and that $\theta'_{\trivrep}(1) = 1 \otimes 1_{N_0}$.
	The map $(\cInd_{\ovrP }^{\ovrP P}\trivrep)^{N_0} \to \trivrep$ given by $1_{n N_0} \mapsto 1$ is evidently $M^+$-equivariant and hence induces an $M$-equivariant map
	\begin{align*}
		\rho \colon \ind_{M^+}^M (\cInd_{\ovrP }^{\ovrP P}\trivrep)^{N_0} \longrightarrow \trivrep
	\end{align*}
	that we claim is inverse to $\theta'_{\trivrep}$.
	Clearly, $\rho \circ \theta'_{\trivrep} = \id_{\trivrep}$.
	Conversely, the identity $\theta_{\trivrep} \circ \rho = \id$ can be checked on each $1_{nN_0}$, which amounts to checking that the images of $1_{nN_0}$ and $1_{N_0}$ in $\ind_{M^+}^M (\cInd_{\ovrP}^{\ovrP P} \trivrep)^{N_0}$ agree.
	Now, for any $n \in N$ we may choose $m \in M^+$ such that $m n m^{-1} \in N_0$.
	Then $m \bullet 1_{n N_0} = 1_{N_0} = m \bullet 1_{N_0}$, which implies the claim.
\end{proof}

Recall from \parencite[Lemma 3.4.12]{heyer-left-adjoint} that the functor $\bbR (\blank)^{N_0} \colon \cD(P^+) \longrightarrow \cD(M^+)$ admits a right adjoint $F_{P^+}^{M^+}$.
We set $\omega^+ \coloneqq F_{P^+}^{M^+}(\trivrep)$.

\begin{lemma} \label{lem:hecke-description-left-adjoint}
	The object $\omega^+ \in \cD(P^+)$ is an invertible character that is trivial on $N_0$ sitting in cohomological degree $-d \coloneqq -\dim_{\Qp} N$ and we have an equivalence $\omega \simeq \omega^+$ in $\cD(P^+)$.

	Moreover, we have a natural equivalence
	\[
		\bbL_N
		\, \simeq \,
		\omega \otimes_R^{\bbL} \ind_{M^+}^M \bbR (\blank)^{N_0}
	\]
	of functors $\cD(P) \to \cD(M)$.
\end{lemma}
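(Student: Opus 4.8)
The plan is to pin down $\omega^+$ first and then deduce the formula for $\bbL_N$ by comparing right adjoints.

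\emph{First assertion.} Since $N_0$ is a torsionfree compact open subgroup of the locally pro-$p$ group $N = \mathbf{N}(F)$, it is a torsionfree compact $p$-adic analytic pro-$p$ group of dimension $d = \dim_{\Qp} N$, hence (Lazard) a Poincaré duality group of dimension $d$; its dualizing character is trivial because $\operatorname{Ad}$ acts unipotently on $\operatorname{Lie} N$, so $\bbR\Gamma(N_0, R)$ lives in degrees $[0, d]$ with $H^d(N_0, R)$ free of rank one over $R$. The first real step is a projection formula for $\bbR(\blank)^{N_0} \colon \cD(P^+) \to \cD(M^+)$, namely $\bbR\Gamma(N_0, V \otimes_R^{\bbL} \mathrm{infl}_{N_0} W) \simeq \bbR\Gamma(N_0, V) \otimes_R^{\bbL} W$, where $\mathrm{infl}_{N_0}$ denotes inflation along $P^+ \twoheadrightarrow M^+$; this is proved by reduction to a perfect complex $W$ and then passing to filtered colimits, using that $\bbR\Gamma(N_0, \blank)$ has finite cohomological amplitude. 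Passing to right adjoints gives $F_{P^+}^{M^+}(W) \simeq \mathrm{infl}_{N_0} W \otimes_R^{\bbL} \omega^+$; in particular $\omega^+$ is invertible in $\cD(P^+)$, and by the duality above it is concentrated in degree $-d$ with $H^{-d}(\omega^+)$ free of rank one. A direct computation of the Hecke $M^+$-action then identifies $\omega^+$ with the character underlying $\omega = \bbL_N \cInd_{\ovrP}^{\ovrP P} \trivrep$ from the Geometrical Lemma \parencite{heyer-geom}, i.e.\ $m \mapsto |\det(\operatorname{Ad}(m) \mid \operatorname{Lie} N)|$ up to the chosen convention; this yields $\omega \simeq \omega^+$ in $\cD(P^+)$.

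\emph{Second assertion.} Set $\Phi \coloneqq \omega \otimes_R^{\bbL} \ind_{M^+}^M \bbR(\blank)^{N_0} \colon \cD(P) \to \cD(M)$, where $\bbR(\blank)^{N_0}$ is understood to factor through restriction to $P^+$. As a composite of $\Res^P_{P^+}$, of $\bbR(\blank)^{N_0}$, of $\ind_{M^+}^M$, and of the invertible twist by $\omega$, the functor $\Phi$ has a right adjoint $G$, obtained by composing the respective right adjoints: $G(Y) \simeq \bbR\Ind_{P^+}^P F_{P^+}^{M^+} \Res^M_{M^+}(\omega^{-1} \otimes_R^{\bbL} Y)$. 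Substituting the projection formula and $\omega \simeq \omega^+$ from the first part, and using the elementary identity $\mathrm{infl}_{N_0} \circ \Res^M_{M^+} = \Res^P_{P^+} \circ \mathrm{infl}_N$ (which makes the two copies of $\omega$ cancel), one rewrites $G(Y) \simeq \bbR\Ind_{P^+}^P \Res^P_{P^+}(\mathrm{infl}_N Y)$. The crux is then to show that on inflated representations the unit $\mathrm{infl}_N Y \to \bbR\Ind_{P^+}^P \Res^P_{P^+} \mathrm{infl}_N Y$ is an equivalence: on $H^0$ this comes from $P = P^+[b_0^{-1}]$ together with the fact that $b_0 \in A_M$ acts invertibly on every smooth $M$-representation (so a $P^+$-linear section of an inflated representation extends uniquely over $P$), and it remains to see that the higher derived induction vanishes here. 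This identifies $G$ with inflation, so $\Phi$ is left adjoint to inflation and hence $\Phi \simeq \bbL_N$ by uniqueness of adjoints; the second displayed equivalence of the lemma then follows by combining $\Phi \simeq \bbL_N$ with $\omega \simeq \omega^+ = F_{P^+}^{M^+}(\trivrep)$.

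\emph{A cross-check and the main obstacle.} One can double-check $\Phi \simeq \bbL_N$ (or reprove it) by evaluating both sides on the objects $\cInd_{\ovrP}^{\ovrP P} W$: the Geometrical Lemma gives $\bbL_N \cInd_{\ovrP}^{\ovrP P} W \simeq \omega \otimes_R^{\bbL} W$, while \cref{lem-theta-equivalence} gives $\ind_{M^+}^M \bbR(\blank)^{N_0} \cInd_{\ovrP}^{\ovrP P} W \simeq W$, so the two functors agree there; as both preserve colimits (for $\Phi$ because $\bbR\Gamma(N_0, \blank)$ has finite cohomological amplitude), this suffices once one knows these objects generate $\cD(P)$ under colimits. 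I expect the main obstacle to lie in the interface with \parencite{heyer-left-adjoint}: the precise shape of $F_{P^+}^{M^+}$ and its projection formula (needed for the first assertion) and the derived — not merely abelian-categorical — identity $\bbR\Ind_{P^+}^P \Res^P_{P^+} \simeq \id$ on inflated representations both require careful bookkeeping with the monoid $P^+$ and its Hecke action. Much of this is already contained in \parencite[Lemma 3.4.12, Proposition 3.4.18]{heyer-left-adjoint} and the projection formulas of \parencite[Lemma 2.2.3]{heyer-geom}, so the actual work is to translate those statements into the $\ind_{M^+}^M$-formulation and to keep track of the twists.
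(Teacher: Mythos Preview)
Your treatment of the first assertion via Lazard/Poincar\'e duality is reasonable and runs parallel to the paper's argument, which uses the stacky base-change formalism of \parencite{heyer-mann} to reach the same conclusion that $F^1_{N_0}(\trivrep)$ is the trivial character in degree $-d$. The substantive difference is in the order of operations: you try to identify $\omega^+ \simeq \omega$ up front by a ``direct computation of the Hecke $M^+$-action'' that you do not actually carry out, whereas the paper postpones this. It first proves the formula $\bbL_N \simeq \omega^+ \otimes_R^{\bbL} \ind_{M^+}^M \bbR(\blank)^{N_0}$ with $\omega^+$ in place of $\omega$, and only then evaluates both sides at $\cInd_M^P \trivrep$, invoking \cref{lem-theta-equivalence}, to read off $\omega^+ \simeq \omega$ as a corollary.

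For the second assertion the paper's route is much shorter than yours: it simply cites \parencite[Proposition 3.4.6]{heyer-left-adjoint}, which already expresses $\bbL_N$ as $\ind_{M^+}^M \bbR(\blank)^{N_0}(\omega^+ \otimes_R^{\bbL} (\blank))$, and then pulls $\omega^+$ outside using the projection formulas for $\bbR(\blank)^{N_0}$ and for $\ind_{M^+}^M$ (\cref{lem-proj-formula}). Your right-adjoint strategy is a legitimate alternative in principle, but the step you yourself flag --- that the unit $\mathrm{infl}_N Y \to \bbR\Ind_{P^+}^P \Res^P_{P^+} \mathrm{infl}_N Y$ is an equivalence, in particular the vanishing of the higher derived induction --- is a genuine gap, and it is not clear this is any easier than what you are trying to prove. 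Your cross-check also has a hole: you have not argued that the objects $\cInd_{\ovrP}^{\ovrP P} W$ generate $\cD(P)$ under colimits, and this is not obvious. The paper sidesteps both obstacles by leaning on the already-established \parencite[Proposition 3.4.6]{heyer-left-adjoint} rather than rebuilding the adjunction from scratch.
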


\begin{proof}
	We start by noting that we have an equivalence $\omega^+ \simeq F_{P_0}^{M_0}(\trivrep)$ in $\cD(P_0)$ where $F_{P_0}^{M_0}$ is the right adjoint of $\bbR (\blank)^{N_0} \colon \cD(P_0) \to \cD(M_0)$, see \parencite[Lemma 3.4.13]{heyer-left-adjoint}.
	If $R$ is a field, then this is a character that is trivial on $N_0$ sitting in degree $-d$ by \parencite[Proposition 3.1.10]{heyer-left-adjoint}.
	For general $R$ the same is true as can be seen as follows, using the results of \parencite{heyer-mann}.

	In the notation of \textit{op.\ cit.} the functor $F^{M_0}_{P_0}$ corresponds to $f^!$, where $f \colon \ast/P_0 \to \ast/M_0$ is the map on classifying stacks induced by the projection $P_0 \to M_0$.
	Consider the base-change diagram
	\[
	\begin{tikzcd}
		\ast/N_0 \ar[d,"g'"'] \ar[r,"f'"]
		& \ast \ar[d,"g"]
		\\
		\ast/P_0 \ar[r,"f"']
		& \ast/M_0
	\end{tikzcd} \, .
	\]
	As the right vertical map is an $R$-smooth cover and $f'$ is $R$-smooth by \cite[Example~5.3.22]{heyer-mann}, we conclude from \cite[Lemma~4.5.7]{heyer-mann} that $f$ is $R$-smooth.
	By suave base-change, see \cite[Lemma~4.5.13]{heyer-mann}, it suffices to see that $g'^*f^!(\trivrep) \simeq f'^!(\trivrep) \simeq F^1_{N_0}(\trivrep)$ is the trivial character of $N_0$ sitting in degree $-d$.
	But this follows from the explicit description in \cite[Example~5.3.22]{heyer-mann}:
	We have $F^1_{N_0}(\trivrep) \simeq \delta_{N_0}[d]$, where $\delta_{N_0}$ is given as the composition
	\[
		N_0 \xlongrightarrow{\det(\kn) \cdot \abs{\det(\kn)}_p} \ZZ_p^{\times} \to R^{\times} \, .
	\]
	Here, $\kn \coloneqq \Lie_{\Qp}(N)$ and $\det(\kn) \colon N \to \Qp^{\times}$ denotes the determinant of the adjoint representation of $N$ on $\kn$.
	As $\bfN$ is a unipotent algebraic group, the character $\det(\kn)$ is trivial, and hence so is $\delta_{N_0}$.

	At this point we have shown that $\omega^+ \simeq \delta^+[d]$ for some character $\delta^+$ of $M^+$.
	From the description \parencite[Corollary 3.4.20]{heyer-left-adjoint} of the left adjoint $\bbL_{N_0}$ of the inflation functor $\cD(M^+) \to \cD(P^+)$ and the projection formula for $\bbR (\blank)^{N_0}$, we now deduce that
	\[
		\trivrep
		\, \cong \,
		\HH^0 \roundbr[\big]{\bbL_{N_0} \trivrep}
		\, \cong \,
		\HH^0 \roundbr[\big]{\bbR (\blank)^{N_0} (\omega^+ \otimes_R^{\bbL} \trivrep)}
		\, \cong \,
		\HH^0 \roundbr[\big]{\omega^+ \otimes_R^{\bbL} \bbR (\blank)^{N_0}(\trivrep)}
		\, \cong \,
		\delta^+ \otimes_R \HH^d(N_0, \trivrep)
	\]
	as $M^+$-representations.
	It follows that $\delta^+$ is invertible and hence extends to a character of $M$.
	Finally we compute
	\[
		\bbL_N
		\, \simeq \,
		\ind_{M^+}^M \bbR (\blank)^{N_0} \roundbr[\big]{\omega^+\otimes_R^{\bbL} (\blank)}
		\, \simeq \,
		\omega^+ \otimes_R^{\bbL} \ind_{M^+}^M \bbR (\blank)^{N_0} \, ,
	\]
	where we have used the description \parencite[Proposition 3.4.6]{heyer-left-adjoint} and the projection formulas for $\bbR (\blank)^{N_0}$ and $\ind_{M^+}^M$.
	Applying this to $\cInd_{M}^P \trivrep$ and using \cref{lem-theta-equivalence} we deduce $\omega^+ \simeq \omega$ as desired.
\end{proof}

\begin{theorem}[Second Adjointness] \label{thm-sec-adj}
	Let $V^{\bullet}$ be a complex of smooth $G$-representations with admissible cohomology.
	Then the morphism
 	\[
 		\alpha_{V^{\bullet}} \colon \omega \otimes_R^{\bbL} \bbR_{\ovrP}^G V^{\bullet} 
 		\longrightarrow \bbL_P^G V^{\bullet}
 	\]
 	is an equivalence. 
\end{theorem}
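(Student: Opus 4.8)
The plan is to check that, after the natural identifications, $\alpha_{V^{\bullet}}$ is nothing but $\omega \otimes_R^{\bbL} \psi_{V^{\bullet}}$, and then to quote \cref{cor-right-derived-rpg}. First I would rewrite the target functor: since $\bbL_P^G = \bbL_N \circ \Res_P^G$ and since the derived $N_0$-invariants of a smooth $G$-representation depend only on its restriction to $P$, \cref{lem:hecke-description-left-adjoint} gives a natural equivalence
\[
    \bbL_P^G \, \simeq \, \omega \otimes_R^{\bbL} \ind_{M^+}^M \bbR (\blank)^{N_0}
\]
of functors $\cD(G) \to \cD(M)$; abbreviate $T \coloneqq \ind_{M^+}^M \bbR (\blank)^{N_0}$, regarded on $\cD(G)$, $\cD(P)$ or $\cD(M)$ as the situation demands. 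Under this identification I claim that $\beta \colon \omega \otimes_R^{\bbL} (\blank) \Rightarrow \bbL_P^G \Ind_{\ovrP}^G$, the composite of the Geometrical Lemma isomorphism $\omega \otimes_R^{\bbL} (\blank) \simeq \bbL_N \cInd_{\ovrP}^{\ovrP P}$ with $\bbL_N$ applied to the big-cell inclusion $\cInd_{\ovrP}^{\ovrP P} \hookrightarrow \Res_P^G \Ind_{\ovrP}^G$, is $\omega \otimes_R^{\bbL} (\blank)$ applied to the composite $\id_{\cD(M)} \xrightarrow{\theta} T \cInd_{\ovrP}^{\ovrP P} \to T \Ind_{\ovrP}^G$: for the first factor this is precisely the identification of $\omega^+$ with $\omega$ from the proof of \cref{lem:hecke-description-left-adjoint} (which uses \cref{lem-theta-equivalence}), applied functorially, and for the second factor it is the compatibility of \cref{lem:hecke-description-left-adjoint} with restriction of morphisms. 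By the commutative square preceding \eqref{equ-hash} the displayed composite of $\theta$ with the big-cell inclusion equals $\psi_{\Ind_{\ovrP}^G} \circ \eta$, where $\eta$ is the unit of $\Ind_{\ovrP}^G \dashv \bbR_{\ovrP}^G$; hence $\beta \simeq \omega \otimes_R^{\bbL} (\psi_{\Ind_{\ovrP}^G} \circ \eta)$.

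Next I would unwind the right mate. By construction $\alpha_{V^{\bullet}} = (\bbL_P^G \epsilon_{V^{\bullet}}) \circ \beta_{\bbR_{\ovrP}^G V^{\bullet}}$ with $\epsilon$ the counit of $\Ind_{\ovrP}^G \dashv \bbR_{\ovrP}^G$, so substituting the description of $\beta$ and of $\bbL_P^G$ as $\omega \otimes_R^{\bbL} T$ turns $\alpha_{V^{\bullet}}$ into $\omega \otimes_R^{\bbL} (\blank)$ applied to
\[
    (T \epsilon_{V^{\bullet}}) \circ \psi_{\Ind_{\ovrP}^G \bbR_{\ovrP}^G V^{\bullet}} \circ \eta_{\bbR_{\ovrP}^G V^{\bullet}} \, .
\]
Naturality of $\psi \colon \bbR_{\ovrP}^G \Rightarrow T$ at the morphism $\epsilon_{V^{\bullet}}$ rewrites the first two factors as $\psi_{V^{\bullet}} \circ (\bbR_{\ovrP}^G \epsilon_{V^{\bullet}})$, and the triangle identity $(\bbR_{\ovrP}^G \epsilon_{V^{\bullet}}) \circ \eta_{\bbR_{\ovrP}^G V^{\bullet}} = \id_{\bbR_{\ovrP}^G V^{\bullet}}$ then leaves only $\psi_{V^{\bullet}}$. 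So, under $\bbL_P^G V^{\bullet} \simeq \omega \otimes_R^{\bbL} T V^{\bullet}$, the morphism $\alpha_{V^{\bullet}}$ is $\omega \otimes_R^{\bbL} \psi_{V^{\bullet}}$.

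Finally, when $V^{\bullet}$ has admissible cohomology the map $\psi_{V^{\bullet}} \colon \bbR_{\ovrP}^G V^{\bullet} \to T V^{\bullet}$ is an equivalence by \cref{cor-right-derived-rpg}, and since $\omega$ is an invertible object of $\cD(M)$ the functor $\omega \otimes_R^{\bbL} (\blank)$ preserves equivalences; hence $\alpha_{V^{\bullet}}$ is an equivalence. I expect the main obstacle to be the bookkeeping in the first step: one has to be confident that the Geometrical Lemma isomorphism, the Hecke-type equivalence $\bbL_N \simeq \omega \otimes_R^{\bbL} T$ of \cref{lem:hecke-description-left-adjoint}, and the big-cell inclusion fit together precisely enough that $\beta$ becomes $\omega \otimes_R^{\bbL} (\psi_{\Ind_{\ovrP}^G} \circ \eta)$; this is essentially a repackaging of computations already carried out in the proof of \cref{lem:hecke-description-left-adjoint} together with the commutative square preceding \eqref{equ-hash}, after which only formal manipulations with mates and triangle identities remain.
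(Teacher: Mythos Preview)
Your proposal is correct and follows essentially the same route as the paper: both arguments identify $\alpha_{V^{\bullet}}$, via \cref{lem:hecke-description-left-adjoint} and \cref{lem-theta-equivalence}, with $\omega \otimes_R^{\bbL} \psi_{V^{\bullet}}$ and then invoke \cref{cor-right-derived-rpg}. The paper packages the ``bookkeeping'' you flag (the compatibility of the Geometrical Lemma isomorphism with $\theta$ and the big-cell inclusion) into a single large commutative diagram whose outer columns are your identifications and whose top row is the triangle identity you use, so your mate-and-triangle computation is exactly the unwinding of that diagram.
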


\begin{proof}
	Under the identification $\omega \otimes_R^{\bbL} (\blank) \simeq \bbL_N \cInd_{\ovrP}^{\ovrP P}$, the natural transformation $\alpha$ corresponds to $\bbL_N \beta$, where $\beta$ denotes the composition
	\[
		\cInd_{\ovrP}^{\ovrP P} \bbR_{\ovrP}^G \longrightarrow \Ind_{\ovrP}^G \bbR_{\ovrP}^G \xlongrightarrow{\varepsilon} \Res_P^G \, ,
	\]
	the second natural transformation being the counit of the adjunction between $\Ind_{\ovrP}^G$ and $\bbR_{\ovrP}^G$. This follows immediately from the definition of $\alpha$.

	Now consider the commutative diagram
	\[
	\begin{tikzcd}
		\omega \otimes_R^{\bbL} \bbR^G_{\ovrP} \ar[d,"\simeq"', "\theta"] \ar[r,"\eta"] \ar[rr, rounded corners, "\id", to path = {|- ([yshift=2ex]\tikztotarget.north) [pos=0.75]\tikztonodes -- (\tikztotarget)}]
		& \omega \otimes_R^{\bbL} \bbR^G_{\ovrP} \Ind_{\ovrP}^G \bbR^G_{\ovrP} \ar[d, "\psi"] \ar[r,"\varepsilon"]
		& \omega \otimes_R^{\bbL} \bbR^G_{\ovrP} \ar[d, "\psi"]
		\\
		\omega \otimes_R^{\bbL} \ind_{M^+}^M \bbR (\blank)^{N_0} \cInd_{\ovrP}^{\ovrP P} \bbR^G_{\ovrP} \ar[d,"\simeq"'] \ar[r]
		& \omega \otimes_R^{\bbL} \ind_{M^+}^M\bbR (\blank)^{N_0} \Ind_{\ovrP}^G \bbR^G_{\ovrP} \ar[d,"\simeq"'] \ar[r,"\varepsilon"]
		& \omega \otimes_R^{\bbL} \ind_{M^+}^M \bbR (\blank)^{N_0} \ar[d,"\simeq"']
		\\
		\bbL_N \cInd_{\ovrP}^{\ovrP P} \bbR^G_{\ovrP} \ar[r] 
		\ar[rr,rounded corners, "\bbL_N \beta"', to path = {|- ([yshift=-2ex]\tikztotarget.south) [pos=0.75]\tikztonodes -- (\tikztotarget)}]
		& \bbL_P^G \Ind_{\ovrP}^G \bbR^G_{\ovrP} \ar[r,"\varepsilon"]
		& \bbL_P^G
	\end{tikzcd} \, ,
	\]
	where the lower vertical natural transformations are the equivalences given by \cref{lem:hecke-description-left-adjoint} and the upper left vertical natural transformation is an equivalence by \cref{lem-theta-equivalence}.
	On complexes with admissible cohomology the top right vertical natural transformation is an equivalence by \cref{cor-right-derived-rpg}.
	From this it follows that on such complexes also the composition $\bbL_N \beta$ of the lower horizontal natural transformations is an equivalence as desired.
\end{proof}

\medskip

\printbibliography

\end{document}